\newcommand{\ttup}[1]{\textup{(}#1\textup{)}}
\newcommand{\stkout}[1]{\ifmmode\text{\sout{\ensuremath{#1}}}\else\sout{#1}\fi}
\definecolor{dmagenta}{rgb}{.4,.1,.5}
\definecolor{dblue}{rgb}{.0,.0,.5}
\definecolor{mblue}{rgb}{.0,.0,.7}
\definecolor{ddblue}{rgb}{.0,.0,.4}
\definecolor{dred}{rgb}{.7,.0,.0}
\definecolor{dgreen}{rgb}{.0,.5,.0}
\definecolor{Eeom}{rgb}{.0,.0,.5}
\definecolor{cm}{cmyk}{1,.0,.0,.0}
\numberwithin{equation}{section}
\theoremstyle{plain}
\newtheorem{theorem}{Theorem}[section]
\newtheorem{lemma}{Lemma}[section]
\newtheorem{proposition}{Proposition}[section]
\theoremstyle{definition}
\newtheorem{definition}{Definition}[section]
\newtheorem{notation}{Notation}[section]
\theoremstyle{remark}
\newtheorem{remark}{Remark}[section]
\crefname{section}{Section}{Sections}
\crefname{subsection}{Subsection}{Subsections}
\crefname{condition}{Condition}{Conditions}
\crefname{hypothesis}{Hypothesis}{Conditions}
\crefname{assumption}{Assumption}{Assumptions}
\crefname{lemma}{Lemma}{Lemmas}
\crefname{corollary}{Corollary}{Corollaries}
\crefname{theorem}{Theorem}{Theorems}
\crefname{claim}{Claim}{Claims}
\crefname{fact}{Fact}{Facts}
\crefname{notation}{Notation}{Notations}
\Crefname{figure}{Figure}{Figures}
\newcommand{\process}[1]{{\{#1_t\}_{t\ge0}}}
\newcommand{\df}{\coloneqq}
\DeclareMathOperator{\Exp}{\mathbb{E}} 
\DeclareMathOperator{\Prob}{\mathbb{P}} 
\newcommand{\D}{\mathrm{d}} 
\newcommand{\E}{\mathrm{e}} 
\newcommand{\RR}{\mathbb{R}} 
\newcommand{\Rm}{{\mathbb{R}^m}} 
\newcommand{\NN}{\mathbb{N}} 
\newcommand{\ZZ}{\mathbb{Z}} 
\newcommand{\Ind}{\mathds{1}} 
\newcommand{\Act}{\mathbb{U}} 
\newcommand{\dd}{\mathfrak{d}}
\newcommand{\Lg}{{\mathcal{L}}}
\newcommand{\cL}{{\mathscr{L}}}  
\newcommand{\cI}{{\mathcal{I}}}
\newcommand{\cJ}{{\mathcal{J}}}
\newcommand{\nJ}{J} 
\newcommand{\cA}{\mathcal{A}}   
\newcommand{\sB}{{\mathscr{B}}} 
\newcommand{\cB}{{\mathcal{B}}} 
\newcommand{\Cc}{\mathcal{C}}    
\newcommand{\cD}{\mathcal{D}}    
\newcommand{\cG}{{\mathcal{G}}}  
\newcommand{\cE}{{\mathcal{E}}}  
\newcommand{\sF}{{\mathcal{F}}}  
\newcommand{\cK}{{\mathcal{K}}} 
\newcommand{\sS}{{\mathscr{S}}}  
\newcommand{\sX}{{\mathscr{X}}}  
\newcommand{\cZn}{{\mathcal{Z}^n}} 
\newcommand{\hcZn}{{\Hat{\mathcal{Z}}^n}} 
\newcommand{\tcZn}{{\widetilde{\mathcal{Z}}^n}} 
\newcommand{\tfZn}{{\widetilde{\mathfrak{Z}}^n}} 
\newcommand{\Usm}{\mathfrak{U}_{\mathsf{sm}}}  
\newcommand{\transp}{^{\mathsf{T}}} 
\newcommand{\order}{{\mathscr{O}}} 
\newcommand{\Lyap}{{\mathscr{V}}}
\newcommand{\veo}{{\varepsilon}}
\newcommand{\mx}{_\mathsf{max}}
\newcommand{\mn}{_\mathsf{min}}
\newcommand{\Sobl}{\mathscr{W}_{\mathrm{loc}}}  
\newcommand{\abs}[1]{\lvert#1\rvert}
\newcommand{\norm}[1]{\lVert#1\rVert}
\newcommand{\babs}[1]{\bigl\lvert#1\bigr\rvert}
\newcommand{\babss}[1]{\biggl\lvert#1\biggr\rvert}
\newcommand{\bnorm}[1]{\bigl\lVert#1\bigr\rVert}
\DeclareMathOperator{\diag}{diag}
\DeclareMathOperator{\trace}{trace}
\newlength{\dhatheight}
\newcommand{\ttl}{\Large 
Uniform stability of some large-scale parallel server networks }
\begin{document}
\title[Uniform stability of large-scale parallel server networks]{\ttl}

\author[Hassan Hmedi]{Hassan Hmedi$^*$}
\address{$^*$ Department of Electrical and Computer Engineering,
The University of Texas at Austin,
Austin, TX~~78712}
\email{$\lbrace$hmedi,ari$\rbrace$@utexas.edu}

\author[Ari Arapostathis]{Ari Arapostathis$^*$}

\author[Guodong Pang]{Guodong Pang$^\dag$}
\address{$^\dag$ Department of Computational and Applied Mathematics,
George R. Brown College of Engineering,
Rice University, Houston, TX~~77005}
\email{gdpang@rice.edu}

\begin{abstract}
In this paper we study the uniform stability properties of two classes of parallel server networks with multiple classes of jobs and multiple server pools of a tree topology.
These include a class of networks with a single non-leaf server pool, such as the `N' and `M' models, and  networks of any tree topology with
class-dependent service rates. 
We show that with $\sqrt{n}$ safety staffing, and no abandonment, in the Halfin--Whitt regime, the diffusion-scaled controlled queueing processes are exponentially ergodic and their invariant probability distributions are tight, \emph{uniformly} over all stationary Markov controls.
We use a unified approach in which the same Lyapunov function is used in the study of the prelimit and diffusion limit.

A parameter called \emph{the spare capacity (safety staffing)
of the network} plays a central role in characterizing the stability results: the parameter being positive is necessary and sufficient that the limiting diffusion is uniformly exponentially ergodic over all stationary Markov controls. 
We introduce the concept of ``\emph{system-wide work conserving policies}", which are defined as policies that minimize the number of idle servers at all times. This is stronger than the so-called joint work conservation. We show that, provided the spare capacity parameter is positive,  the diffusion-scaled processes are geometrically ergodic and the invariant distributions are tight, uniformly over all ``system-wide work conserving policies". 
In addition, when the spare capacity is negative we show that the diffusion-scaled processes are transient under any stationary Markov control, and when it is zero, they cannot be positive recurrent.

\end{abstract}

\subjclass[2000]{Primary: 90B22. Secondary: 60K25; 49L20; 90B36}

\keywords{uniform exponential ergodicity,
parallel server (multiclass multi-pool) networks,
Halfin--Whitt regime, spare capacity,
system-wide work conservation}

\maketitle

\section{Introduction}

Large-scale parallel server networks have been the subject of intense study,
due to their use in modeling a variety of systems including telecommunications,
data centers, customer services and manufacturing systems;
see, e.g., \cite{AAM, GKM03,brown-et-al,garnett-mandel-reiman,
AIMMTYT,Shietal,van2017economies,van2018scalable}.  
In such networks, there are multiple classes of jobs and multiple server pools where each job class can be served by a subset of server pools while each server pool can serve a subset of job classes, thus requiring optimal routing and scheduling decisions.  
Many of these systems operate in the so-called the 
Halfin--Whitt regime (or Quality-and-Efficiency-Driven (QED) regime
\cite{halfin-whitt,W92,borst2004dimensioning}),
where the arrival rates and the numbers of servers grow large as the scale of
the system grows, while the service rates remain fixed in such a way that the system
becomes critically loaded.

Ensuring stability of these systems through allocating available resources by
means of adjusting controller parameters is of great importance.
Existing work in the literature has addressed the following important questions:
\begin{enumerate}
\item[(i)]
Uniform stability of the multiclass single-pool ``V" network. 
The study in \cite{GS12} focused on the prelimit diffusion-scaled process and showed
that, with square-root safety staffing in the single-pool of servers, the invariant
probability distributions under all work-conserving 
scheduling policies are tight, and have a uniform exponential tail when
the model has no abandonment 
(or a sub-Gaussian tail with abandonment). 
In \cite{AHP18}, a unified approach with a common Lyapunov function is developed to 
establish a Foster-Lyapunov equation 
for both the diffusion limit and the diffusion-scaled processes,
which shows that the associated invariant
probability measures have exponential tails,
uniformly over the scale of the network, and over all stationary
(work-conserving) Markov controls.

\item[(ii)]
Stability of the `N' network under a static priority scheduling policy. 
With safety staffing in one server pool and no abandonment, Stolyar \cite{Stolyar-15b}
employed a integral type of Lyapunov function and established the tightness of
stationary distributions of the diffusion-scaled process
(there is no analysis of the rate of convergence though).

\item[(iii)]
Counterexamples for stability of multi-class multi-pool  networks.
Stolyar and Yudovina \cite{Stolyar-Yudovina-13} 
showed that the stationary distributions of the diffusion-scaled processes may not be 
tight in these regimes under a natural load balancing 
scheduling policy, ``Longest-queue freest-server" (LQFS-LB)
(also true in the underloaded regime). 

\item[(iv)]
Stability of multi-class multi-pool networks with pool-dependent service rates under
the LQFS-LB policy  \cite{Stolyar-Yudovina-13}.
We also refer the reader to \cite{Stolyar-15,Stolyar-Yudovina-12},
even though these concern the underloaded case.

\item[(v)]
Stability of multiclass multi-pool networks under a family of
Markov policies.
In \cite{AP18,AP19}, it is shown that a 
class of state-dependent policies, referred to as
\emph{balanced saturation policies} (BSP) are stabilizing for the prelimit 
diffusion-scaled queueing process, when at least one abandonment rate is strictly 
positive.

\item[(vi)]
Stability of the limiting controlled diffusions for multiclass multi-pool networks
under a constant control.
Arapostathis and Pang \cite{AP16} developed a leaf elimination algorithm to
derive an explicit expression of the drift, and, consequently, by using the structural
properties of the drift, a static priority scheduling and routing control
is identified which stabilizes
the limiting diffusion, when at least one of the classes has a positive 
abandonment rate. 

\item[(vii)] Stabilizability of multiclass multi-pool networks of any tree topology without abandonment in the Halfin-Whitt regime. 
Hmedi, Arapostahis and Pang \cite{HAP-OR} identified a system-wide safety staffing parameter and showed that that parameter being positive is a necessary and sufficient condition for the network to be stabilizable, that is, there exists a scheduling policy under which the 
stationary distributions of the controlled diffusion-scaled queueing processes are tight over the size of the network. 
\end{enumerate}

The stability results in (v) and (vi) are used in the aforementioned
papers for  the study of ergodic control problems for multiclass multi-pool
networks.
In \cite{ABP15,AP16,AP18,AP19}, due to the lack of the ``uniform stability''
(also called  ``blanket stability") property, 
ergodic control problems were studied using a rather elaborate methodology.
The uniform stability properties established in
this paper render the ergodic control problem much simpler,
and it can be studied by applying the methodology
in  \cite[Chapter 3.7]{ABG12}.

Despite all the important results in (i)--(vi), the ergodic properties of multiclass
multi-pool networks in the Halfin--Whitt 
regime are far from being well understood.  
The stability analysis of multiclass multi-pool networks in the Halfin--Whitt regime is 
considerably more challenging than the  corresponding one for the `V'~network. 
The problem is particularly difficult when the system does not have abandonment.

Given the counterexamples in \cite{Stolyar-Yudovina-13}, uniform stability,
that is,
tightness of the invariant probability distributions,
does not hold for multiclass multi-pool networks of any tree topology.
In this paper we identify a large class of such networks
that are indeed uniformly stable: (a) networks with  one dominant server pool, 
that is, a single non-leaf server pool, which include the `N', `M' and 
generalized `N', `M'~networks with diameters equal to three or four,
and (b) networks with class-dependent service rates.
It might appear to the reader that the topology in (a) is restrictive.
One should note though that even for simple networks with two non-leaf
server pools \cite[Figure~2, p.~21]{Stolyar-Yudovina-13} the parameters can
be chosen so that uniform stability fails.

The classes of networks in (a)--(b)
share an important structural property in their drift, that is,
the matrix $B_1$ in \cref{Eb} is diagonal.
We establish a necessary and sufficient condition for uniform stability,
via the so-called spare capacity (safety staffing) parameter defined
in \cref{Evarrho}.
For the networks under consideration,
we show  that if the spare capacity is negative, then the limiting diffusion is 
transient under any stationary Markov control, if it is zero,
the diffusion cannot be positive recurrent, and if it is positive,
the diffusion limit is uniformly stable over all stationary Markov controls
(see \cref{Ttran,thm-main}).
The analogous results for the diffusion-scaled processes are also established. 
Lastly, we provide a characterization of the spare capacity 
parameter for the limiting diffusion when the latter is positive recurrent.
We show in \cref{GP} that the spare capacity is equal to an average `idleness'
weighted by the critical quantity in \cref{E-tran}. 

To prove the uniform exponential ergodicity for the limiting controlled diffusion, 
we use  a common Lyapunov function given in \cref{ED4.1C}. 
This Lyapunov function consists
of two components that
treat the positive and negative half spaces of the state space in a 
delicate manner. 
An important `tilting' parameter must be carefully chosen to 
account for not only the different effects of queueing and idleness (positive and 
negative half state space), but also the second order derivatives of the
extended generator of the diffusion. 
Note that these Lyapunov functions
differ from the quadratic Lyapunov functions used in 
\cite{AP16, AP18, AP19, DG13, APS19}  for the study
of stability under either constant controls or assuming abandonment, and also differ from that used in  \cite{ AHP18} for the uniform stability of the `V'~network. In \cite{DG13} for example, the stability analysis requires the existence of a common quadratic Lyapunov function which cannot be shown for the models under consideration. As it will be clear to the reader later in the paper, the uniform stability analysis without abandonment requires the use of the sum of two functions where each of them `dominates' the other over a part of the state space. See for example the proofs of \cref{Lstable,L4.1}.

The same Lyapunov function is used to prove the uniform exponential ergodicity 
for the prelimit diffusion-scaled processes.
However, unlike the `V'~network studied in \cite{AHP18},
the Foster-Lyapunov equations for the limiting diffusion do not carry over to
the analogous equations for the diffusion-scaled queueing processes over
the entire state space.
The reason lies in the jointly work conserving (JWC) condition 
(that is, all the queues have to be empty when there are idle servers) which is
essential in establishing the weak convergence to the controlled limiting diffusion
(see \cite{Atar-05b,Atar-05a}).
To tackle this difficulty, we first provide an explicit `drift' representation
of the diffusion-scaled processes which differs from the drift of
the diffusion by an extra term that accounts
for the deviation from the JWC condition in the $n^{\rm th}$ system,
and which vanishes in the limit.
A natural extension of the concept of work conservation for multiclass multi-pool
networks is minimization of the idle servers at all times. 
This defines an action space which we call 
\emph{system-wide work conserving} (SWC).
Establishing the ``uniform" geometric ergodicity over all SWC Markov policies when
the spare capacity is positive, is accomplished by first proving a useful
upper bound for the minimum of idle servers and cumulative queue size
for the $n^{\mathrm th}$ system, and then using this to derive the Foster--Lyapunov drift
inequalities in the region of the state space where the drifts of the
diffusion limit and the $n^{\mathrm th}$ system do not match. 
This facilitates establishing the drift inequalities for the
diffusion-scaled processes.
As a consequence of the Foster-Lyapunov equations,
the invariant probability measures of the diffusion-scaled queueing processes
have uniform exponential tails. 

The property of interchange of limits attests to the validity of the diffusion
approximation for the queueing network.
For stochastic networks in the conventional heavy traffic regime, we refer the readers 
to the papers \cite{gamarnik-zeevi,Budhiraja-09,Gurvich-16,YY-16,YY-18,BDM-17}
and references therein.
For the `V'~network in the Halfin--Whitt regime, interchange of limits is established in
\cite{GS12,AHP18}.
For the `N'~network, Stolyar \cite{Stolyar-15b} has shown the interchange of limits
under a specific static priority policy. 
This property also holds for networks with pool-dependent service rates under the
LQFS-LB scheduling policy, as shown in \cite[Section 7.2]{Stolyar-Yudovina-13}. 
Stolyar and Yudovina \cite{Stolyar-Yudovina-12} and Stolyar \cite{Stolyar-15b} then 
proved tightness of the stationary distributions and interchange of limits
 of a leaf-activity priority policy in the sub-diffusion and diffusion scales, 
 respectively, in the underloaded regime. 
This paper contributes to this literature by establishing that the limit of the
diffusion-scaled invariant distributions is equal to the invariant distribution of
the limiting diffusion process for the large classes of networks considered
under any stationary Markov policy (see Remark~\ref{R5.3}).  

\subsection{Organization of the paper}
In the next subsection, we summarize the notation used in the paper.
In \cref{S2.1}, we describe the model and state informally the assumptions used.
We define the diffusion scaled processes, and characterize the corresponding
controlled generator in \cref{S2.2}.
In \cref{S2.3}, the notion of \emph{system-wide work conserving policies} is
introduced, and this is used in \cref{S2.4} to take limits and establish the
diffusion approximation.
In \cref{S3}, we define the parameter of \emph{spare capacity} ($\varrho$)
for multiclass multi-pool networks and show that whenever $\varrho<0$,
the process is transient under any stationary
Markov control both for the diffusion limit and the $n^{\mathrm{th}}$ system for the models under consideration.
In the same subsection, we establish the relation between the spare capacity and
average idleness.
In \cref{S4} we first provide equivalent characterizations of uniform exponential
ergodicity of controlled diffusions, and then proceed to
establish that the diffusion limits of
the aforementioned classes of networks are uniformly exponentially ergodic
and their invariant probability measures have uniform exponential tails.
Finally, \cref{S5} is devoted to the study of uniform exponential ergodicity
of the $n^{\mathrm{th}}$ system of networks under consideration.

\subsection{Notation} 
We use $\mathbb{R}^m$ (and $\mathbb{R}^m_+$), $m\ge 1$,
to denote real-valued $m$-dimensional (nonnegative) vectors, and write $\RR$
for the real line.
We use $z\transp$ to denote the transpose of a vector $z\in\Rm$.
Throughout the paper $e\in\Rm$ stands for the vector whose elements are
equal to $1$, that is, $e=(1,\dotsc,1)\transp$, and $e_i\in\Rm$ denotes
the vector whose elements are all $0$ except for the $i^{\mathrm{th}}$ element
which is equal to $1$.
For $x, y\in \RR$,
$x \vee y = \max\{x,y\}$, $x\wedge y = \min\{x,y\}$, 
$x^+ = \max\{x, 0\}$ and $x^- = \max\{-x,0\}$.

For a set $A\subseteq\RR^m$, we use $A^{\mathsf c}$, $\partial A$, and $\Ind_{A}$
to denote the complement, the boundary, and the indicator function of $A$, respectively.
A ball of radius $r>0$ in $\RR^m$ around a point $x$ is denoted by $\sB_{r}(x)$,
or simply as $\sB_{r}$ if $x=0$.
We also let $\sB \equiv \sB_{1}$.
The Euclidean norm on $\RR^m$ is denoted by $\abs{\,\cdot\,}$,
and $\langle \cdot\,,\,\cdot\rangle$ stands for the inner product.
For $x\in\RR^m$, we let $\norm{x}^{}_1\df \sum_i \abs{x_i}$,
and by $K_r$, or $K(r)$, for $r>0$, we denote the closed cube
\begin{equation}\label{Ecube}
K_r \,\df\, \{x\in\RR^m \colon \norm{x}^{}_{1} \le r\}\,.
\end{equation}
Also, we define $x\mx\df \max_i\, x_i$, and $x\mn\df \min_i\, x_i$,
and $x^\pm\df \bigl(x_1^\pm,\dotsc,x_m^\pm\bigr)$.

For a finite signed measure $\nu$ on $\Rm$,
and a Borel measurable $f\colon\Rm\to[1,\infty)$,
the $f$-norm of $\nu$ is defined by
\begin{equation}\label{Efnorm}
\norm{\nu}_f \,\df\, \sup_{\substack{g\in\cB(\Rm), \; \abs{g}\le f}}\;
\babss{\int_{\Rm} g(x)\,\nu(\D{x})}\,,
\end{equation}
where $\cB(\Rm)$ denotes the class of Borel measurable functions on $\Rm$.

\section{The queueing network model and the diffusion limit}

In this section, we consider a sequence of parallel server networks whose processes,
parameters, and variables are indexed by $n$. We recall
some of the definitions and notations used in \cite{AP16,AP19}.

\subsection{Model and assumptions}\label{S2.1}

Consider a general Markovian parallel server (multiclass multi-pool) network with $m$ classes of
customers and $J$ server pools. Customer classes take values
in $\cI=\{1,\dotsc,m\}$ and server pools  in $\cJ=\{1,\dotsc,\nJ\}$.
Forming their own queue, customers of each class are served according to
a First-Come-First-Served (FCFS) service discipline.
We assume throughout the paper that customers do not abandon.
For all $i\in\cI$, let $\cJ(i)$ denote the subset of server pools that can serve
customer class $i$. On the other hand, for all $j\in\cJ$, let $\cI(j)$
be the subset of customer classes that can be served by server pool $j$.

We form a bipartite undirected graph $\cG = (\cI\cup \cJ, \cE)$ with a set of edges defined by
$\cE = \{(i,j)\in\cI\times\cJ\colon j\in\cJ(i)\}$, and use the notation $i \sim j$,
if $(i,j)\in\cE$, and $i \nsim j$, otherwise.
We assume that the graph $\cG$ is a tree. 
We define
\begin{equation}\label{eqn-R-G}
\RR^{\cG}_+ \,\df\, \bigl\{\xi=[\xi_{ij}]\in\RR^{m \times \nJ}_+\,\colon
\xi_{ij}=0~~\text{for~}i\nsim j\bigr\}\,,
\end{equation}
and analogously define $\RR^{\cG}$, $\ZZ^{\cG}_+$, and $\ZZ^{\cG}$. 

In each server pool $j$, we let $N_j^n$ be the number of servers, and assume that the servers are statistically identical.
For each $i\in\cI$, class $i$ customer arrives according to a Poisson process
with arrival rate $\lambda_i^n > 0$. 
These customers are served at an exponential rate $\mu_{ij}^n >0$ at server pool $j$ 
if $j\in\cJ(i)$, and $\mu_{ij}^n=0$ otherwise.
Finally, we assume that the arrival and service processes of all classes
are mutually independent.
We study these networks in the Halfin--Whitt regime, which involves the
following assumption on the parameters.
There exist positive constants $\lambda_i$ and  $\nu_j$,
nonnegative constants  $\mu_{ij}$, with $\mu_{ij}>0$
for $i\sim j$ and $\mu_{ij}=0$ for $i\nsim j$,  and constants 
$\Hat{\lambda}_i$, $\Hat{\mu}_{ij}$ and $\Hat{\nu}_j$, such that
the following limits exist as $n\to\infty$:
\begin{equation}\label{EHW}
\frac{\lambda^{n}_{i} - n \lambda_{i}}{\sqrt{n}} \;\to\;\Hat{\lambda}_{i}\,,\qquad
{\sqrt{n}}\,(\mu^{n}_{ij} - \mu_{ij}) \;\to\;\Hat{\mu}_{ij}\,,\quad\text{and}\quad
 \frac{N^{n}_{j} -  n\nu_{j}}{\sqrt{n}} \;\to\;  \Hat{\nu}_j\,.
\end{equation}
The parameters $\lambda_i$ and $\mu_{ij}$ are the limiting arrival and service rates, $\nu_j$ is the limiting service capacity in pool $j$ in the fluid scale, while the parameters $\Hat{\lambda}_i$, $\Hat{\mu}_{ij}$ and $\Hat{\nu}_j$ are the associated limits in the diffusion scale.

An additional standard assumption
referred to as the \emph{complete resource pooling} condition
\cite{williams-2000, Atar-05b} concerns the fluid scale equilibrium,
and is stated as follows.
The linear program (LP) given by
\begin{equation}\label{ELP}
\text{Minimize} \quad  \max_{j \in \cJ}\;\sum_{i \in \cI(j)} \xi_{ij}\,,
\quad \text{subject to} \quad  \sum_{j \in \cJ(i)} \mu_{ij} \nu_j \xi_{ij}
\,=\, \lambda_i \ \ \forall\, i \in \cI\,, 
\end{equation}
has a unique solution
$\xi^*=[\xi^*_{ij}]\in\RR^{\cG}_+$  satisfying 
\begin{equation} \label{ELP-id}
\sum_{i \in \cI} \xi^*_{ij} \,=\, 1, \quad \forall j \in \cJ \,,
\quad\text{and}\quad \xi^*_{ij}>0\quad \text{for all~} i \sim j\,.
\end{equation}
We define $x^*\in\RR^m$, and $z^* \in\RR^{\cG}_+$ by
\begin{equation}\label{Efluid}
x_i^* = \sum_{j\in\cJ} \xi^*_{ij}\nu_j\,,
\quad\text{and\ \ } z^*_{ij} = \xi^*_{ij}\nu_j\,.
\end{equation}
The quantity $\xi^*_{ij}$ represents the fraction of servers in pool $j$ allocated to class $i$ in the fluid equilibrium, $x^*_i$ represents the total number of class $i$ customers in the system, and $z^*_{ij}$ is the number of class $i$ customers in pool $j$. 
Note that the constraint in \eqref{ELP} is the rate balance equation for each class $i$ with allocations in each service pool $j$. Also, $\rho_j:=\sum_i\xi_{ij}$ can be interpreted as the traffic intensity in pool $j$, hence the condition in \eqref{ELP-id} implies that each pool is critically loaded.

For each $i\in\cI$ and $j\in\cJ$, we let $X_i^n=\{X_i^n(t)\colon t\ge 0\}$
denote the total number of class $i$ customers in the system
(both in service and in queue), $Z_{ij}^n=\{Z_{ij}^n(t),\,t\ge0 \}$ the number of
class $i$ customers currently being served in pool $j$,
$Q_i^n=\{Q_i^n(t),\,t\ge0\}$ the number of class $i$ customers in the queue,
and $Y_j^n=\{Y_j^n(t),\,t\ge0\}$ the number of idle servers in server pool $j$.
Let $X^{n} = (X_i^{n})_{i \in \cI}$, $Y^{n} = (Y_j^{n})_{j \in \cJ}$, 
$Q^{n} = (Q_i^{n})_{i \in \cI}$,
and $Z^{n} = (Z_{ij}^{n})_{i \in \cI,\, j \in \cJ}$.
The process $Z^{n}$ is the scheduling control. 
We have clearly the following \emph{balance equations}
\begin{equation}\label{Eqy2}
\begin{aligned}
Q^n_i(t) &\,\df\, X_i^n(t) - \sum_{j\in\cJ} Z^n_{ij}(t)\,,\quad i\in\cI\,,\\[5pt]
Y_j^n(t) &\,\df\,   N_j^n - \sum_{i\in\cJ}Z^n_{ij}(t)\,,
\quad j\in\cJ\,,
\end{aligned}
\end{equation}
Dropping the explicit dependence on $n$ for simplicity,
let $(x,z)\in\ZZ_+^m\times \ZZ^{\cG}_+$ denote a state-action pair. We rewrite \cref{Eqy2} as
\begin{equation}\label{Eqy}
\begin{aligned}
q_i(x, z) &\,\df\, x_i - \sum_{j\in\cJ} z_{ij}\,,\quad i\in\cI\,,\\[5pt]
y_j(z) &\,\df\,   N_j^n - \sum_{i\in\cJ}z_{ij}\,,
\quad j\in\cJ\,.
\end{aligned}
\end{equation}
and define the \emph{action space} $\cZn(x)$ by
\begin{equation*}
\cZn(x)\,\df\, \bigl\{z \in \ZZ^{\cG}_+\,\colon
q_i(x,z) \wedge y_j^n(z) =0\,,~
q_i(x,z)\ge0\,,~y_j^n(z) \ge0 \quad\forall\,(i,j)\in\cE\bigr\}\,. 
\end{equation*}
Note that this space consists of work-conserving actions only. It should be noted here that there is an abuse of notation in \cref{Eqy}. The quantities $q_i(x,z)$ and $y_j(z)$ still represent the number of class $i$ customers in the queue and the number of idle servers in pool $j$ respectively. Equation \cref{Eqy} is used to show the dependence on $x$ and $z$ through the balance equations.

\subsection{Diffusion scaling}\label{S2.2}

With $\xi^*\in\RR^\cG_+$ the solution of the (LP), we
define the centering quantities of the diffusion-scaled processes $\Bar{z}^n \in\RR^{\cG}_+$ and $\Bar{x}^n\in\RR^m$ by
\begin{equation}\label{Eequil}
 \Bar{z}^n_{ij} \,\df\, \frac{1}{n} \xi^*_{ij}N_j^n\,,
 \qquad \Bar{x}^n_i \,\df\, \sum_{j\in\cJ} \Bar{z}^n_{ij}\,,
\end{equation}
and
\begin{equation} \label{Edszx}
\begin{aligned}
\Hat{X}^{n}_i(t) &\,\df\,
\frac{1}{\sqrt{n}} \bigl(X_i^{n}(t) - n \Bar{x}^n_{i}\bigr) \,,  \\[5pt]
\Hat{Q}^{n}_i(t) &\,\df\, \frac{1}{\sqrt{n}} Q_i^{n}(t) \,,
\end{aligned}
\qquad
\begin{aligned}
\Hat{Z}^{n}_{ij}(t) &\,\df\, \frac{1}{\sqrt{n}}
\bigl(Z_{ij}^{n}(t) - n \Bar{z}^n_{ij}\bigr)\,,\\[5pt]
\Hat{Y}^{n}_j(t) &\,\df\, \frac{1}{\sqrt{n}} Y_j^{n}(t) \,.
\end{aligned}
\end{equation}

Using \cref{Eqy2,Edszx}, these obey the \emph{centered balance equations}
\begin{equation}\label{Ebal}
\begin{aligned}
& \Hat{X}^{n}_i(t) \,=\, \Hat{Q}_i^{n}(t) + \sum_{j \in \cJ(i)} \Hat{Z}^{n}_{ij}(t)
\qquad \forall\, i \in \cI\,, \\[5pt]
 & \Hat{Y}^{n}_j(t) + \sum_{i\in \cI(j)} \Hat{Z}^{n}_{ij}(t) \,=\, 0
 \qquad \forall\, j \in \cJ \,.
\end{aligned}
\end{equation}
We introduce suitable notation in the diffusion scale as follows
(see \cite[Definition~2.3]{AP19}).

For $x\in\ZZ^{m}_{+}$ and $z\in \cZn(x)$, we define
\begin{equation}\label{ED2.1A}
\Hat{x}^{n}\,\df\, \frac{x - n \Bar{x}^n}{\sqrt{n}}\,, \qquad
 \Hat{z}^n \,\df\, \frac{z - n \Bar{z}^n}{\sqrt{n}}\,,
\end{equation}
and let $\sS^n$ denote the state space in the diffusion scale, that is,
\begin{equation}\label{EsSn}
\sS^n \,\df\,
\bigl\{\Hat{x}\in\Rm\,\colon \sqrt{n}\Hat{x}+n\Bar{x}^n\in\ZZ^m_{+}\bigr\}\,.
\end{equation}
It is clear that the diffusion-scaled work-conserving action space $\hcZn(\Hat{x})$
takes the form
\begin{equation*}
\hcZn(\Hat{x}) \,\df\,
\bigl\{\Hat{z}\,\colon
\sqrt{n}\Hat{z}+n \Bar{z}^n\in\cZn(\sqrt{n}\Hat{x}+n \Bar{x}^n)\bigr\}\,,
\qquad \Hat{x}\in\sS^n\,.
\end{equation*}

Recall that a scheduling policy is called stationary Markov if $Z^n(t) = z(X^n(t))$
for some function $z\colon\ZZ^{m}_+ \to \ZZ^{\cG}_+$, in which case we identify the
policy with the function $z$.
Under a stationary Markov policy, $X^n$ is Markov with controlled generator 
\begin{equation}\label{EcL}
\cL_z^n f(x) \,\df\, \sum_{i\in\cI}\Biggl(\lambda_i^n\bigl(f(x+e_i)-f(x)\bigr)
+ \sum_{j\in\cJ(i)}\mu_{ij}^n z_{ij}\bigl(f(x-e_i)-f(x)\bigr)\Biggr)
\end{equation} 
for $f\in \Cc(\RR^m)$ and $x \in \ZZ^{m}_{+}$.
Let $\ell^{n} = (\ell^{n}_1,\dotsc,\ell^{n}_m)\transp$ be defined by
\begin{equation}\label{Eelln}
\ell^{n}_i\,\df\, \frac{1}{\sqrt{n}} \biggl(\lambda^{n}_i
-  \sum_{j \in \cJ(i)} \mu_{ij}^{n} \xi^*_{ij}N_j^n \biggr) \,.
\end{equation}
By \cref{Eequil}, the assumptions on the parameters in
\cref{EHW,ELP}, we have
\begin{equation*}
\ell^{n}_i \;\xrightarrow[n\to\infty]{}\;
\ell_i\,\df\, \Hat{\lambda}_i - \sum_{j \in \cJ(i)} \Hat{\mu}_{ij} z_{ij}^* -\sum_{j\in\cJ(i)} \mu_{ij} \xi^*_{ij} \hat{\nu}_j\,,
\end{equation*}
with $z^*$ as in \cref{Efluid}.
Let $\ell\df(\ell_1,\dotsc,\ell_m)\transp$. Note that $\ell_i^n$ and $\ell_i$ can be regarded as the deficit or surplus in the number of servers (of order $\order(\sqrt{n}$) allocated to class $i$ in the diffusion scale. Note also that $\ell_i^n$ and $\ell_i$ appears as constants in the drift of the diffusion-scaled and diffusion limit processes; see \cref{Ebn,Eb}

We drop the dependence on $n$ in the diffusion-scaled
variables in order to simplify the notation.
A work-conserving stationary Markov policy $z$,
that is a map $z\colon\ZZ^{m}_+ \to \ZZ^{\cG}_+$
such that $z(x)\in\cZn(x)$ for all $x\in\ZZ_+^m$, gives rise to a policy
$\Hat{z}\colon\sS^n\to\RR^\cG$, with $\Hat{z}(\Hat{x}) \in\hcZn(\Hat{x})$
for all $\Hat{x}\in\sS^n$, via \cref{ED2.1A} (and vice-versa).
Using \cref{EcL,Edszx,Eelln} and rearranging terms, the controlled generator
of the corresponding
diffusion-scaled process can be written as
\begin{equation}\label{E-Gn}
\begin{aligned}
\widehat\cL_{\Hat{z}}^n f(\Hat{x}) &\,=\,
\sum_{i\in\cI}\frac{\lambda_i^n}{n}\,
\frac{\dd f\bigl(\Hat{x};\tfrac{1}{\sqrt n} e_i\bigr)
+ \dd f\bigl(\Hat{x};-\tfrac{1}{\sqrt n} e_i\bigr)}{n^{-1}} \\
&\mspace{200mu}-\sum_{i\in\cI} b^n_i(\Hat{x},\Hat{z})\,
\frac{\dd f\bigl(\Hat{x}; -\tfrac{1}{\sqrt n} e_i\bigr)}{n^{-\nicefrac{1}{2}}}\,,
\quad \Hat{x}\in\sS^n\,,\ \Hat{z}\in\hcZn(\Hat{x})\,,
\end{aligned}
\end{equation}
where
$\dd f$ is given by
\begin{equation*}
\dd f(x;y) \,\df\, f(x+y) -f(x)\,,\quad x,y\in\Rm\,,
\end{equation*}
and the `drift' $b^n=(b_1^n,\cdots,b_m^n)\transp$ is given by
\begin{equation}\label{E-GnC}
b^n_i(\Hat{x},\Hat{z}) \,\df\, \ell_i^n - \sum_{j\in\cJ(i)}\mu_{ij}^n
\Hat{z}_{ij}\,,\quad\Hat{z}\in \hcZn(\Hat{x})\,,\ i\in\cI\,.
\end{equation}

Abusing the notation for $\Hat{x}\in\sS^n$ and $\Hat{z}\in\hcZn(\Hat{x})$, we define
(compare with \cref{Ebal})
\begin{equation}\label{ED2.2A}
\Hat{q}^n_i(\Hat{x},\Hat{z})\,\df\, \Hat{x}_i - \sum_{j\in\cJ(i)} \Hat{z}_{ij}\,,
\quad i\in\cI\,,
\qquad
\Hat{y}^n_j(\Hat{z})\,\df\, - \sum_{i\in\cI(j)} \Hat{z}_{ij}\,, \quad j\in\cJ\,,
\end{equation}
and
\begin{equation}\label{ED2.2B}
\Hat{\vartheta}^n(\Hat{x},\Hat{z})\,\df\,
\langle e, \Hat{q}^n(\Hat{x},\Hat{z})\bigr\rangle\wedge\,
\langle e, \Hat{y}^n(\Hat{z})\bigr\rangle\,.
\end{equation}
Recall \cref{Edszx}. The parameter $\hat{\vartheta}^n$ can therefore be regarded as the scaled minimum of the total number of customers in the queues and the total number of idle servers.

By \cref{ED2.2A}, we have
\begin{equation}\label{ES2.2A}
\bigl\langle e, \Hat{q}^n(\Hat{x},\Hat{z})\bigr\rangle
\,=\,\Hat{\vartheta}^n(\Hat{x},\Hat{z})
+ \langle e, \Hat{x}\rangle^+\,,
\quad\text{and}\quad
\bigl\langle e,\Hat{y}^n(\Hat{z})\bigr\rangle
\,=\,\Hat{\vartheta}^n(\Hat{x},\Hat{z})
+ \langle e, \Hat{x}\rangle^-
\end{equation}
for all $\Hat{x}\in\sS^n$ and $\Hat{z}\in\hcZn(\Hat{x})$.
Define the $(m-1)$ and $(J-1)$ simplexes
\begin{equation}\label{E-simp}
\varDelta_c \,\df\, \{u\in\RR^m \,\colon u\ge0\,,\ \langle e,u\rangle = 1\}\,,
\quad\text{and\ \ }
\varDelta_s \,\df\, \{u\in\RR^{\nJ} \,\colon u\ge0\,,\ \langle e,u\rangle = 1\}\,,
\end{equation}
and let $\varDelta\df \varDelta_c\times\varDelta_s$.
By \cref{ES2.2A}, there exists $u=(u^c,u^s)\in\varDelta$
such that
\begin{equation}\label{ES2.2B}
\Hat{q}^n(\Hat{x},\Hat{z}) \,=\,\bigl(\Hat{\vartheta}^n(\Hat{x},\Hat{z})
+\langle e, \Hat{x}\rangle^+\bigr)\,u^c\,,
\quad\text{and}\quad
\Hat{y}^{n}(\Hat{z}) \,=\, \bigl(\Hat{\vartheta}^n(\Hat{x},\Hat{z})
+\langle e, \Hat{x}\rangle^-\bigr)\,u^s\,.
\end{equation}
Let
\begin{equation*}
\cD \,\df\, \Bigl\{ (\alpha,\beta) \in \Rm\times \RR^{J}\,\colon
\textstyle\sum_{i=1}^m \alpha_i = \sum_{j=1}^J \beta_j\Bigr\}\,.
\end{equation*}
As shown in \cite[Proposition~A.2]{Atar-05a},
there exists a unique linear map $\Phi=[\Phi_{ij}]\colon \cD\to \RR^{\cG}$
solving
\begin{equation} \label{EPhi}
\sum_{j\in\cJ(i)} \Phi_{ij}(\alpha,\beta)\,=\,\alpha_i \quad \forall\, i \in \cI\,,
\quad\text{and}\quad
\sum_{i\in\cI(j)} \Phi_{ij}(\alpha,\beta)\,=\,\beta_j \quad \forall\, j \in \cJ\,.
\end{equation}
The solutions $\Phi_{ij}$ correspond to the resource allocations $\hat{z}_{ij}$ in the diffusion scale, see \eqref{ES2.2C}.
Since $\bigl(\Hat{x}-\Hat{q}^{n}(\Hat{x},\Hat{z}),- \Hat{y}^{n}(\Hat{z})\bigr)\in\cD$
by \cref{Eqy,ED2.2A},
using the linearity of the map $\Phi$ and \cref{ES2.2B,EPhi}, it follows that
\begin{equation}\label{ES2.2C}
\begin{aligned}
\Hat{z} &\,=\,
\Phi\bigl(\Hat{x}-\Hat{q}^{n}(\Hat{x},\Hat{z}),- \Hat{y}^{n}(\Hat{z})\bigr)\\
&\,=\, \Phi\bigl(\Hat{x}-\langle e,\Hat{x}\rangle^{+} u^c,
-\langle e,\Hat{x}\rangle^{-} u^s\bigr)
-\Hat{\vartheta}^{n}(\Hat{x},\Hat{z})\,\Phi(u^c,u^s)\,.
\end{aligned}
\end{equation}

We describe an important property of the linear map $\Phi$ which
we need later.
Consider the matrices $B_1^n\in\RR^{m\times m}$ and
$B_2^n\in\RR^{m\times J}$ defined by
\begin{equation}\label{Elinear}
\sum_{j\in\cJ(i)} \mu^n_{ij} \Phi_{ij}(\alpha,\beta) \,=\,
\bigl(B_1^n \alpha + B_2^n \beta\bigr)_i\,,\quad
\forall\,i\in\cI\,,\ \forall (\alpha,\beta)\in \cD\,.
\end{equation}
It is clear that for $B_1^n$ to be a nonsingular matrix the
basis used in the representation of the linear map $\Phi$ should be
of the form $\mathfrak{D} =\bigl(\alpha, (\beta)_{-j}\bigr)$, $j\in\cJ$, where
$(\beta)_{-j}= \{\beta_\ell\,,\, \ell\ne j\}$.
Since $\Phi$ has a unique representation in terms of such a basis, and since
$B_i^n$, $i=1,2$, are determined uniquely from $\Phi$ by \cref{Elinear},
abusing the terminology, we refer to such an $\mathfrak{D}$ as a basis for
$B_i^n$, $i=1,2$.
In \cite[Lemma~4.3]{AP16}, the following property is asserted:
Given any $\Hat\imath\in\cI$, there exists an ordering of $\{\alpha_i\,,\;i\in\cI\}$
with $\alpha_{\Hat\imath}$ the last element, and $\Hat\jmath\in\cJ$,
such that the matrix
$B_1^n$ is lower diagonal with positive diagonal elements with
respect to this ordered basis $\bigl(\alpha, (\beta)_{-\Hat\jmath}\bigr)$.
For more details, we refer the reader to \cite[Section~4.1]{AP16}.

In view of \cref{ES2.2C,Elinear},
for any $\Hat{z}\in\tcZn(\Hat{x})$ with $\Hat{x}\in\sS^n$,
there exists $u=u(\Hat{x},\Hat{z})\in\varDelta$ such that the drift
$b^n$ in \cref{E-GnC} takes the form
\begin{equation}\label{Ebn}
b^n(\Hat{x},\Hat{z}) \,=\, \ell^n 
- B_1^n \bigl(\Hat{x}-\langle e,\Hat{x}\rangle^{+} u^c\bigr)
+ B_2^n u^s \langle e,\Hat{x}\rangle^{-}
+\Hat{\vartheta}^{n}(\Hat{x},\Hat{z}) \bigl(B_1^n u^c + B_2^n u^s\bigr)\,.
\end{equation}

\subsection{Joint and system-wide work conservation}\label{S2.3}

We start with the following definition.

\begin{definition}\label{D2.3}
We say that an action $\Hat{z}\in\hcZn(\Hat{x})$ is \emph{jointly work
conserving} (JWC), if $\Hat{\vartheta}^n(\Hat{x},\Hat{z})=0$. Recall that a work conserving policy refers to an action in which a server is idle if and only if there is no customer waiting in the queue that this server can serve. A jointly work conserving action keeps all servers busy unless
all queues are empty.
Let
\begin{equation*}
\Hat{\vartheta}^n_\ast(\Hat{x}) \,\df\, \min_{\Hat{z}\in\hcZn(\Hat{x})}\,
\Hat{\vartheta}^n(\Hat{x},\Hat{z})\,,\quad \Hat{x}\in\sS^n\,,
\end{equation*}
and
\begin{equation*}
\tcZn(\Hat{x}) \,\df\,
\bigl\{\Hat{z}\in\hcZn(\Hat{x})\,\colon \Hat{\vartheta}^n(\Hat{x},\Hat{z})
= \Hat{\vartheta}^n_\ast(\Hat{x})\bigr\}\,,\quad \Hat{x}\in\sS^n\,.
\end{equation*}
We refer to $\tcZn(\Hat{x})$ as the
\emph{system-wide  work conserving} (SWC) action set at $\Hat{x}$.
A stationary Markov scheduling policy $\Hat{z}$ is called SWC if
$\Hat{z}(\Hat{x})\in\tcZn(\Hat{x})$ for all $\Hat{x}\in\sS^n$.
We let $\tfZn$ denote the class of all such policies.
Since $z$ and $\Hat{z}$ are related by \cref{ED2.1A}, abusing this terminology,
we also refer to a Markov policy $z\colon\ZZ_+^m\to\ZZ_+^\cG$
as SWC, if it satisfies
\begin{equation*}
\frac{z(x) - n \Bar{z}^n}{\sqrt{n}}\,\in\,
\tcZn\biggl(\frac{x - n \Bar{x}^n}{\sqrt{n}}\biggr)\,,
\end{equation*}
and we write $z\in\tfZn$.
\end{definition}
 
We recall \cite[Lemma~3]{Atar-05b} which states that there exists
$M_0>0$ such that the collection
of sets $\Breve{\sX}^{n}$ defined by
\begin{equation}\label{EhsX}
\Breve{\sX}^{n}\,\df\,
\bigl\{\Hat{x} \in \sS^n\,\colon  \norm{\Hat{x}}^{}_1\le M_{0}\, \sqrt{n}  \bigr\}\,,
\end{equation} 
has the following property.
If $\Hat{x}\in\Breve{\sX}^{n}$, then for any pair $(\Hat{q},\Hat{y})$
such that $\sqrt{n}\Hat{q}\in\ZZ^m_+$, $\sqrt{n}\Hat{y}\in\ZZ^J_+$, and satisfying
\begin{equation*}
\langle e,\Hat{q}\rangle\wedge \langle e,\Hat{y}\rangle \,=\,0\,,\quad
\langle e,\Hat{x}-\Hat{q}\rangle \,=\, \langle e,-\Hat{y}\rangle\,,
\quad\text{and\ \ }
\Hat{y}_j\le N^n_j\,,\quad j\in\cJ\,,
\end{equation*}
 it holds that
$\Phi(\Hat{x}-\Hat{q},-\Hat{y})\in\hcZn(\Hat{x})$.
It follows from this lemma and \cref{D2.3} that
if $\Hat{x}\in\Breve{\sX}^{n}$, then the actions in $\tcZn(\Hat{x})$
are JWC.

\begin{remark}
Using \cref{ES2.2A}, we know that under the JWC condition
\begin{equation*}
\bigl\langle e, \Hat{q}^n(\Hat{x},\Hat{z})\bigr\rangle
\,=\,\langle e, \Hat{x}\rangle^+\,,
\quad\text{and}\quad
\bigl\langle e,\Hat{y}^n(\Hat{z})\bigr\rangle
\,=\,\langle e, \Hat{x}\rangle^-\,.
\end{equation*}
Rewriting \cref{ES2.2B} under the JWC condition, we therefore have the following
\begin{equation*}
u^c_i \,=\,
\begin{cases}
\frac{\hat{q}^n_i(\hat x, \hat z)}{\langle e, \Hat{q}^n(\Hat{x},\Hat{z})\bigr\rangle} \,,\qquad \text{if } \langle e, \Hat{q}^n(\Hat{x},\Hat{z})\bigr\rangle > 0 \,,\\
e_1\,, \qquad \text{otherwise}
\end{cases}
\end{equation*}
and 
\begin{equation*}
u^s_j \,=\,
\begin{cases}
\frac{\hat{y}^n_j(\hat z)}{\langle e, \Hat{y}^n(\Hat{z})\bigr\rangle} \,,\qquad \text{if } \langle e, \Hat{y}^n(\Hat{z})\bigr\rangle > 0 \,,\\
e_1\,, \qquad \text{otherwise}.
\end{cases}
\end{equation*}
Hence, one can see that the control $u^c_i$ represents the proportion of the total queue length in the network at queue $i$, while $u^s_j$ represents the proportion of the total number of idle servers in the network at pool $j$.

Recall from \cref{D2.3} that a JWC action keeps all servers busy unless
all queues are empty. It is clear that this cannot be always enforced in multiclass multi-pool
networks over the entire state space. A SWC control enforces the complementarity between overall queue length and service.

\end{remark}

\subsection{The diffusion limit}\label{S2.4}
The \emph{diffusion approximation} or \emph{diffusion limit} of the
queueing model described above is an
$m$-dimensional stochastic differential equation (SDE) of the form 
(see \cite{Atar-05a} and \cite[Section 2.5]{Atar-05b})
\begin{equation}\label{E-sde}
\D X_t\,=\,b(X_t,U_t)\,\D{t} +\upsigma(X_t)\, \D W_t\,,\qquad X_0=x\in \mathbb{R}^m\,.
\end{equation}
Here, $\process{W}$ is a standard $m$-dimensional Brownian motion,
and the control $U_t$ takes values in the set $\varDelta= \varDelta_c\times\varDelta_s$
defined in \cref{E-simp}.
The drift $b$ can be derived as follows.
Recall $\RR^{\cG}$ in \eqref{eqn-R-G}.
For $u=(u^c,u^s)\in\varDelta$, let $\widehat{\Phi}[u]\colon\Rm\to\RR^{\cG}$
be defined by
\begin{equation}\label{EhPhi}
\widehat{\Phi}[u](x) \,\df\,
\Phi\bigl(x- (e\cdot x)^{+} u^c, - (e\cdot x)^{-} u^s\bigr)\,,
\end{equation}
with $\Phi$ as defined in \cref{EPhi}.
Then the drift $b$ takes the form
\begin{equation}\label{Edrift}
b_i(x,u)\,=\, \ell_i - \sum_{j\in\cJ(i)}\mu_{ij}\widehat{\Phi}_{ij}[u](x)\,.
\end{equation}
By \cite[Lemma~4.3]{AP16}, we also know that \cref{Edrift} can be expressed as
\begin{equation}\label{Eb}
b(x,u) \,=\, \ell - B_1\bigl(x- \langle e,x \rangle^+ u^c\bigr)
+ B_2 u^s \langle e,x \rangle^-\,,
\end{equation}
where $B_1\in\RR^{m\times m}$ is a lower diagonal matrix with positive
diagonal elements, and $B_2\in\RR^{m\times J}$.
Of course $B_i$ in \cref{Eb} and $B_i^n$ in \cref{Ebn}, $i=1,2$,
have the same functional form with respect
to $\{\mu_{ij}\}$ and  $\{\mu^n_{ij}\}$, respectively.

The diffusion matrix $\upsigma\in\RR^{m\times m}$ is constant, and
\begin{equation*}
a \,\df\, \upsigma\upsigma\transp
\,=\,\diag(2\lambda_1,\dots,2\lambda_m)\,.
\end{equation*}
In addition, for $f\in \Cc^2(\RR^m)$, we define
\begin{equation}\label{ELg}
\Lg_u f(x) \,\df\, \frac{1}{2} \trace\bigl(a\nabla^2f(x)\bigr)
+\bigl\langle b(x,u),\nabla f(x)\bigr\rangle\,,
\end{equation}
with $\nabla^2f$ denoting the Hessian of $f$.

In the following, we describe the drift of the diffusion limit of the networks under consideration while showing some examples in \cref{{fig-networks}}.
\subsubsection{Networks with a dominant server pool}\label{S2.4.1}

This network has one non-leaf server node, which, without loss of generality,
we label as $j=1$.
As in \cref{S2.1}, the customer nodes are denoted by $\cI = \{1,2,\dotsc,m\}$, and
the server nodes by $\cJ = \{1,2,\dotsc,\nJ\}$.
Recall that $\cJ(i)$ is the collection of sever nodes connected to customer $i$.
Owing to the tree structure of the network,
server $1\in\cJ(i)$ for all $i\in\{1,2,\dotsc,m\}$.
Let $\cJ_1(i)\df\cJ(i)\setminus\{1\}$ for all $i\in\cI$.
Recall the form of the drift in \cref{Edrift}.
Using \cref{EPhi}, it is simple to show that the matrix $\widehat{\Phi}_{ij}[u]$
for this network is given by
\begin{equation}\label{EhPhiM}
\widehat{\Phi}_{ij}[u](x) \,=\, 
\begin{cases}
x_i -  \langle e,x \rangle^+u_i^c + \sum_{j\in\cJ_1(i)}
 \langle e,x \rangle^-\,u_j^s & \text{for\ } j=1\,,\\[5pt]
- \langle e,x \rangle^- \,u_j^s& \text{for\ } j\in\cJ_1(i)\,,\\[5pt]
0 &\text{otherwise.}
\end{cases}
\end{equation}    
Using \cref{EhPhiM}, the drift takes the following simple form:
\begin{equation} \label{EdriftM}
b_i(x,u)\,=\,\ell_i - \mu_{i1}\bigl(x_i - u_i^c \langle e,x\rangle^+\bigr)
+ \sum_{j\in\cJ_1(i)}\mu_{i1}\bigl(\eta_{ij} - 1\bigr)u_{j}^s \langle e,x \rangle^-\,,
\qquad i\in\cI\,,
\end{equation}
with $\eta_{ij} \df \frac{\mu_{ij}}{\mu_{i1}}$ for $j\in\cJ_1(i)$ and $i\in\cI$.
Note that $B_1=\diag(\mu_{11},\dotsc,\mu_{m1})$,
and so $\ell = -\frac{\varrho}{m} B_1e$,
where $\varrho$ is given by \cref{Evarrho}.
We define
\begin{equation*}
\Bar\eta \,\df\,  \max_{i\in\cI}\,\max_{j\in\cJ_1(i)}\, \eta_{ij}\,,
\quad\text{and\ \ }
\underline\eta \,\df\, \min_{i\in\cI}\,\min_{j\in\cJ_1(i)}\, \eta_{ij}\,.
\end{equation*}

\begin{figure*}[htp]
\centering
\begin{subfigure}[t]{0.325\textwidth}
\centering
    \includegraphics[height=1.1in]{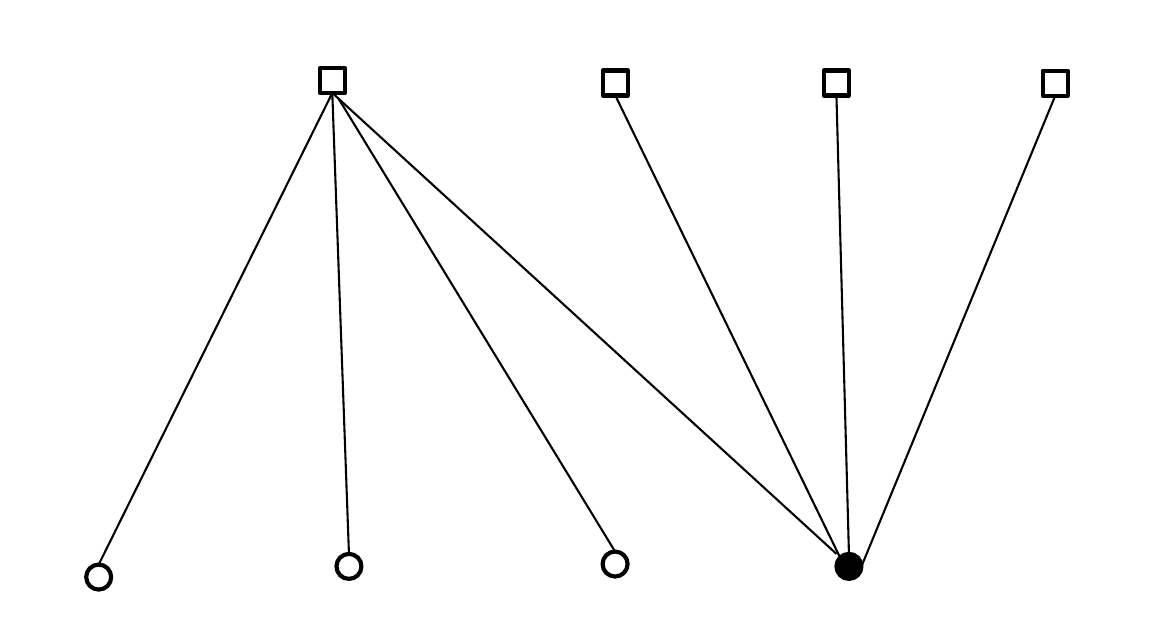} 
    \caption{Generalized `N' Network}
    \end{subfigure}
    \begin{subfigure}[t]{0.33\textwidth}
    \centering
  \includegraphics[height=1.1in]{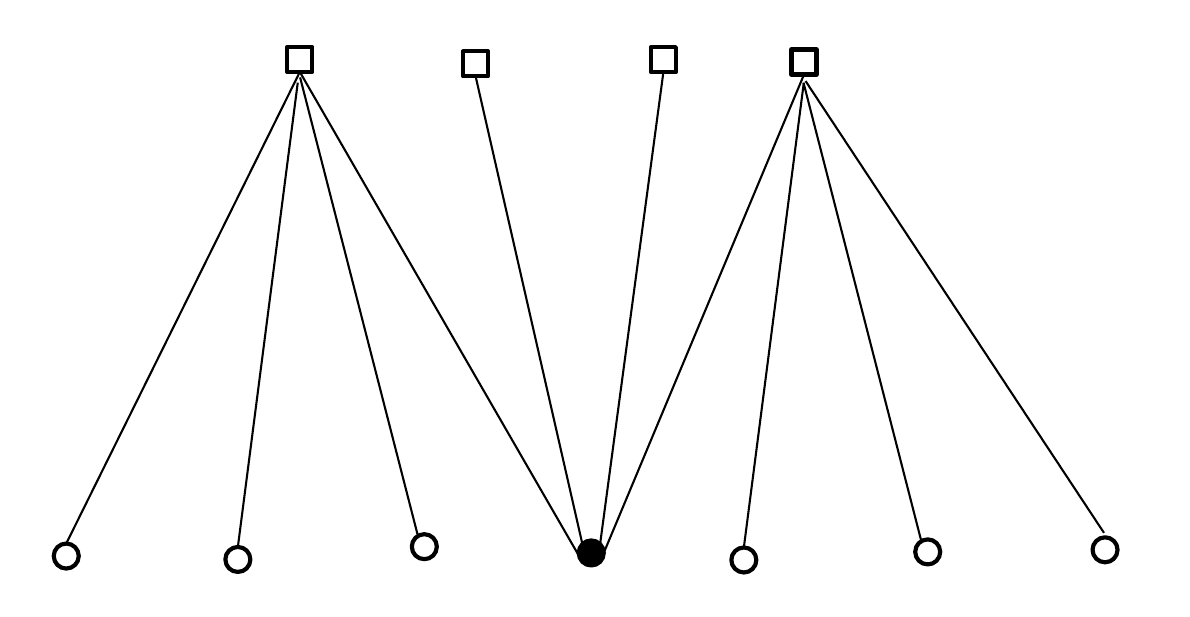}
    \caption{Generalized `M' Network}
  \end{subfigure}
  \begin{subfigure}[t]{0.33\textwidth}
  \centering
   \includegraphics[height=1.1in]{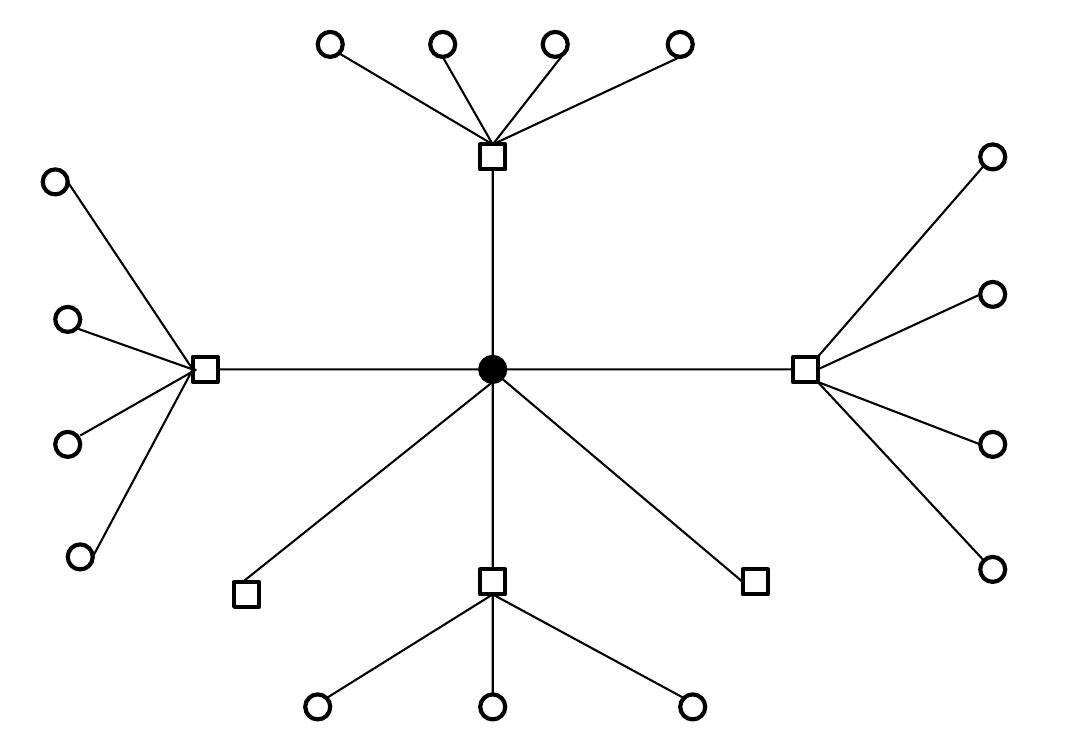}
    \caption{Network with a dominant server pool}
   \end{subfigure}
\caption{Examples of multiclass multi-pool networks with a dominant pool
(square--customer classes, circle--server pools, solid circle--the
dominant server pool)
}
 \label{fig-networks}
\end{figure*}

\subsubsection{Networks with class-dependent service rates}\label{S2.4.2}
We consider in this part arbitrary tree networks where the service rates are dictated
by the customer type; namely $\mu_{ij}\,=\,\mu_i$ for all $(i,j)\in\cE$.
Recall the definition in \cref{EhPhi}.
Using \cref{EPhi,Edrift}, the drift of this network takes the form
\begin{equation}\label{Edrift-i}
b_i(x,u) \,=\, \ell_i - \sum_{j\in\cJ(i)}\mu_{ij}\widehat{\Phi}_{ij}[u](x)
\,=\, \ell_i - \mu_i\bigl(x_i - u_i^c\langle e,x \rangle^+\bigr)\,,
\qquad \forall i\in\cI\,.
\end{equation}
Note then that $B_1 = \diag(\mu_1, \dots, \mu_m)$ and $B_2=0$.

We  remark that both classes of networks have one common feature: the matrix $B_1$ is diagonal. 

\subsection{A necessary and sufficient condition for uniform stability} 

We define the spare capacity (or the safety staffing) for the
$n^{\mathrm{th}}$ system (prelimit)
and the diffusion limit by 
\begin{equation}\label{Evarrho}
\varrho_n \,\df\, -\,\bigl\langle  e(B_1^n)^{-1},\ell^n\,\bigr\rangle\,,
\quad\text{and\ \ }
\varrho \,\df\, -\,\bigl\langle  eB_1^{-1},\ell\,\bigr\rangle\,,
\end{equation}
respectively.
Note, of course, that $\varrho_n\to\varrho$ as $n\to\infty$ by \cref{EHW}. Recall the expressions of $\ell^n$ and $\ell$ in \cref{Eelln}. Recall that $\ell_i^n$ and $\ell_i$ can be regarded as the deficit or surplus in the number of servers (of order $\order(\sqrt{n}$) allocated to class $i$ in the diffusion scale.  Hence $\varrho_n$ and $\varrho$ can be regarded as the optimal 
reallocation of the capacity
fluctuations (positive or negative) of order $\sqrt{n}$ 
when each server pool employs 
a square-root staffing rule.

We summarize the main results of the paper. 

\begin{theorem} \label{thm-main}
Consider a network with a dominant server pool, or with class-dependent service rates.
Then the conditions  $\varrho>0$ and $\varrho_n >0$ are necessary and sufficient
for the uniform stability of the limiting diffusion and the diffusion-scaled
queueing processes, respectively. More precisely:
\begin{itemize}
\item[(i)]
if  $\varrho<0$, the process $\process{X}$ in \cref{E-sde} is transient under any
stationary Markov control.
In addition, if $\varrho = 0$, then $\process{X}$ cannot be positive recurrent.

\item[(ii)]
if $\varrho_n <0$, the process $\process{X^n}$ is transient under any
stationary Markov scheduling policy. 
In addition, if $\varrho_n = 0$, then $\process{X^n}$ cannot be positive recurrent.

\item[(iii)] if $\varrho >0$, the processes  $\process{X}$  are uniformly
exponentially ergodic over stationary Markov controls.

\item[(iv)] if $\varrho_n >0$, the processes $\process{X^n}$ are uniformly
exponentially ergodic over SWC scheduling policies, and the invariant distributions
have exponential tails.
\end{itemize}
\end{theorem}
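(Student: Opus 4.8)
The theorem gathers four assertions of two kinds, and the plan is to treat each kind through a single device resting on the structural fact recorded after \cref{Eb}: for the networks in the statement, $B_1$ and $B_1^n$ are \emph{diagonal} with positive entries. Set $V(x)\df\langle e,B_1^{-1}x\rangle$ and $V^n(x)\df\langle e,(B_1^n)^{-1}x\rangle$; these linear functionals are the right ones with which to read off the spare capacity. For \textup{(i)}--\textup{(ii)} I would first compute from \cref{Eb}, using $\langle e,B_1^{-1}\ell\rangle=-\varrho$ (see \cref{Evarrho}), $\langle e,u^c\rangle=1$, and the elementary identities for positive/negative parts, the formula
\begin{equation*}
\Lg_u V(x)\,=\,-\varrho+\bigl(1+\langle e,B_1^{-1}B_2u^s\rangle\bigr)\langle e,x\rangle^-\,,\qquad u=(u^c,u^s)\in\varDelta\,,
\end{equation*}
and check, by a short estimate (using $\eta_{ij}>0$ and $u^s\in\varDelta_s$, with $B_2=0$ in the class-dependent case), that the coefficient of $\langle e,x\rangle^-$ is positive; hence $\Lg_uV\ge-\varrho$ for \emph{all} $u$ and $x$, with equality on the overloaded half-space $\{\langle e,x\rangle\ge0\}$. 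The same computation for the generator \cref{EcL} applied to $V^n$ gives $\cL_z^nV^n(x)=-\sqrt n\,\varrho_n+\Lambda_z^n(x)$ with $\Lambda_z^n\ge0$ a nonnegative linear combination of the idle-server counts, vanishing precisely when all servers are busy. If $\varrho_n<0$, then $\cL^n_zV^n\ge\sqrt n\abs{\varrho_n}>0$, so $V^n(X^n_t)=V^n(X^n_0)+\int_0^t\cL^n_zV^n(X^n_s)\,\D s+M^n_t\ge V^n(X^n_0)+\sqrt n\abs{\varrho_n}\,t+M^n_t$, and since $M^n$ is a martingale with jump rates of order $n$ and bounded jumps, $M^n_t/t\to0$ a.s.; thus $V^n(X^n_t)\to\infty$ and, $V^n$ being linear, $\abs{X^n_t}\to\infty$ — transience under every stationary Markov policy. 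The diffusion with $\varrho<0$ is handled identically: $\Lg_uV\ge\abs{\varrho}$ pathwise, so It\^o's formula gives $V(X_t)\ge V(X_0)+\abs{\varrho}t+M_t$ with $M$ a continuous martingale of quadratic variation $ct$ ($c>0$ constant), whence $\abs{X_t}\to\infty$. If $\varrho=0$ (resp.\ $\varrho_n=0$), then $\Lg_uV\ge0$ (resp.\ $\cL^n_zV^n\ge0$) is sign-definite, so a standard truncation argument gives $\int\Lg_uV\,\D\pi=0$ for any invariant $\pi$, forcing $\pi$ to charge only states with $\langle e,x\rangle^-=0$ (resp.\ with all servers busy); this contradicts irreducibility — the diffusion has everywhere-positive transition densities since $a$ is nondegenerate and constant, and the chain can reach the empty state. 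This gives \cref{Ttran}, hence \textup{(i)}--\textup{(ii)}.

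For \textup{(iii)} the core is to produce the common Lyapunov function $\Lyap$ of \cref{ED4.1C} and to verify the geometric drift inequality
\begin{equation*}
\Lg_u\Lyap(x)\,\le\,c_0-c_1\Lyap(x)\,,\qquad u\in\varDelta\,,\ x\in\Rm\,,
\end{equation*}
with $c_0,c_1>0$ independent of the control and $\Lyap$ of exponential growth; the equivalent characterizations of uniform exponential ergodicity — which rely on a uniform lower bound for the transition densities on compacts, available here because $a$ is a fixed nondegenerate matrix and $b$ is uniformly bounded on compacts — then deliver at once uniform exponential ergodicity of \cref{E-sde} over all stationary Markov controls and uniform exponential tails of the invariant distributions, i.e.\ \cref{Tstable}. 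I would take $\Lyap=\Lyap_1+\Lyap_2$, with $\Lyap_1$ governing the overloaded region, where $\Lg_uV=-\varrho<0$ supplies the drift (so an exponential of a truncated $V$ is natural), and $\Lyap_2$ governing the underloaded region, where the restoring term $-B_1x$ supplies it; the two are glued through a tilting parameter chosen so that (a) the first-order term is strictly negative away from the origin in \emph{both} regions — the only place $\varrho>0$ is used — and (b) the nonnegative second-order term $\tfrac12\trace(a\nabla^2\Lyap)$ is dominated. That a single such choice works \emph{uniformly in $u$} is exactly what the diagonality of $B_1$ buys; note a quadratic $\Lyap$ would not give exponential tails, and a common quadratic $\Lyap$ does not exist here.

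For \textup{(iv)} I would feed the \emph{same} $\Lyap$ into the diffusion-scaled generator \cref{E-Gn}. By \cref{Ebn}, $\widehat\cL_{\Hat z}^n\Lyap(\Hat x)$ equals $\Lg_{u}\Lyap(\Hat x)$ (with $u=u(\Hat x,\Hat z)$) plus difference-quotient-versus-derivative remainders which, for exponential $\Lyap$, are bounded by $C n^{-1/2}\Lyap(\Hat x)$, plus the genuinely new drift term $\Hat{\vartheta}^n(\Hat x,\Hat z)\,\langle B_1^nu^c+B_2^nu^s,\nabla\Lyap(\Hat x)\rangle$ caused by the failure of joint work conservation (see \cref{ED2.2B,D2.3}). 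On the ball $\Breve{\sX}^n$ of \cref{EhsX}, SWC actions are JWC, so $\Hat{\vartheta}^n=0$ there and the inequality of \textup{(iii)} carries over up to the vanishing remainder; outside $\Breve{\sX}^n$ the plan is to first establish a uniform upper bound, in terms of $\abs{\Hat x}$, for $\Hat{\vartheta}_\ast^n(\Hat x)=\min_{\Hat z\in\hcZn(\Hat x)}\Hat{\vartheta}^n(\Hat x,\Hat z)$, and then, using the explicit form of $\Lyap$, to absorb the extra term and conclude $\widehat\cL_{\Hat z}^n\Lyap\le c_0\Ind_{K}-c_1\Lyap$ uniformly over all large $n$ and all SWC policies; geometric ergodicity and uniform exponential tails follow, which is \cref{T5.1}.

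The computations with $V$ and the It\^o/stationarity arguments of \textup{(i)}--\textup{(ii)} are routine. The genuine difficulty — and the reason the result is confined to networks with a dominant pool or with class-dependent rates — is the construction of $\Lyap$ in \cref{ED4.1C} and the verification of its uniform drift inequality in \textup{(iii)}: one must control the interface $\{\langle e,x\rangle=0\}$ where the two pieces of $\Lyap$ meet, the interplay of the tilting parameter with the second-order term, and, for the dominant-pool network, the sign-indefinite $B_2$-contribution on the underloaded region, all uniformly over the entire control set $\varDelta$. With \textup{(iii)} secured, \textup{(iv)} reduces to bounding $\Hat{\vartheta}_\ast^n$ and the $O(n^{-1/2})$ remainders; compare the proofs of \cref{Lstable,L4.1}.
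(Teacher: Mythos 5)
Your outline is correct, and for parts (iii)--(iv) it is essentially the paper's own architecture: the sum $V_1+V_2$ of \cref{ED4.1C} with the tilting parameter $\theta$, the reduction via the abstract drift criterion of \cref{T4.1} (the paper's \cref{Lstable,Tstable}), and for the prelimit the same $V$ fed into $\widehat\cL^n_{\Hat z}$, with the JWC property on $\Breve{\sX}^{n}$ and a uniform bound on $\Hat{\vartheta}^n_\ast$ (the paper's \cref{Lnice}) handling the region where the drifts disagree. Where you genuinely diverge is in (i)--(ii). The paper does \emph{not} use the linear functional $\langle e,B_1^{-1}x\rangle$ directly; it compresses it through $H(x)=\tanh\bigl(\beta\langle e,B_1^{-1}x\rangle\bigr)$, shows $\Lg_uH>0$ for small $\beta$ (the second-order term is now nonzero but controllable), and concludes transience from the fact that a bounded submartingale converges while a recurrent diffusion would force $H$ to be constant. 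Your route --- $\Lg_uV\ge\abs{\varrho}$ exactly (no second-order term), It\^o plus the martingale strong law giving $V(X_t)/t\to$ a positive limit --- is more elementary and even yields a quantitative escape rate; the analogous computation for the prelimit with $\cL^n_z V^n=-\sqrt n\,\varrho_n+\Lambda^n_z$, $\Lambda^n_z\ge0$, is consistent with the paper's \cref{Ebn,E-tran}. The one place where the paper's choice of a \emph{bounded} test function is doing real work is the borderline case $\varrho=0$: the stationarity identity $\uppi(\Lg_uV)=0$ is not automatic for an unbounded linear $V$ (neither $V$ nor $\langle e,x\rangle^-$ is known a priori to be $\uppi$-integrable), so the ``standard truncation argument'' you invoke must be carried out with the care of the paper's proof of \cref{Ttran} --- split off the bounded and the nonnegative parts, use dominated and monotone convergence separately, and let the compression scale $\beta\searrow0$ --- otherwise one only obtains an inequality in the unhelpful direction. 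With that caveat, and granting that the ``short estimates'' you defer are exactly the content of \cref{L4.1,L4.2,L4.3,L5.3,L5.4,L5.5}, the proposal is sound.
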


Parts (i) and (ii) of \cref{thm-main} follow from
\cref{Ttran,Ctran}, respectively.
Part (iii) follows from \cref{Tstable}, and part (iv) from \cref{T5.1}.

\section{Two properties of the spare capacity and transience} \label{S3}
In the first part of this section we prove the results in
\cref{thm-main} (i) and (ii).
It is important to note that for the models in \cref{S2.4.1,S2.4.2} we have
\begin{equation}\label{E-tran}
1+ \langle e,B_1^{-1}B_2 u^s \rangle \,>\,0\,.
\end{equation}
Note that for networks with a dominant server pool, we have 
$1 = u_1^s + \sum_{i \in \cI} \sum_{j \in 
\cJ_1(i)} u_j^s$. Hence
\begin{equation*}
\begin{aligned}
1 + \langle e, B_1^{-1} B_2 u^s \rangle & = 1 + \sum_{i \in \cI} \sum_{j \in 
\cJ_1(i)} (\eta_{ij} - 1)u_j^s\\
& = 1 - \sum_{i \in \cI} \sum_{j \in 
\cJ_1(i)} u_j^s + \sum_{i \in \cI} \sum_{j \in 
\cJ_1(i)} \eta_{ij}u_j^s  \\
&= u_1^s + \sum_{i \in \cI} \sum_{j \in 
\cJ_1(i)} \eta_{ij}u_j^s > 0.  
\end{aligned}
\end{equation*}
Note also that for networks with a class dependent service rate we have established that $B_2 = 0$. Hence $1 + \langle e, B_1^{-1} B_2 u^s \rangle = 1 > 0$.

\begin{proposition}\label{Ttran}
Suppose that $\varrho= - \langle e,B_1^{-1}\ell\rangle<0$.
Then the process $\process{X}$ in \cref{E-sde} is transient under any
stationary Markov control.
In addition, if $\varrho = 0$, then $\process{X}$
cannot be positive recurrent.
\end{proposition}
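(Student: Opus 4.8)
The plan is to construct an explicit Lyapunov/test function whose drift under $\Lg_u$ is controlled uniformly over $u\in\varDelta$, exploiting the special structure of the drift guaranteed by \cref{Eb} (with $B_1$ lower-diagonal, positive diagonal) together with the positivity \cref{E-tran}. The natural candidate is a linear functional of the state composed with a suitable monotone transformation. Concretely, set $\psi(x)\df\langle e,B_1^{-1}x\rangle$, so that, using \cref{Eb}, one computes
\[
\langle b(x,u),\nabla\psi(x)\rangle \;=\; -\varrho - \psi\bigl(x-\langle e,x\rangle^+u^c\bigr) \cdot(\text{coefficient}) \;+\;\langle e,x\rangle^-\bigl(1+\langle e,B_1^{-1}B_2u^s\rangle\bigr)\,,
\]
after regrouping the terms $\langle e,B_1^{-1}\ell\rangle=-\varrho$. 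The key observation is that along the half-space where $\langle e,x\rangle\le 0$ the idleness term $\langle e,x\rangle^-$ appears with a \emph{positive} multiplier by \cref{E-tran}, i.e. the drift of $\psi$ pushes the linear coordinate \emph{upward} there, while the term $-\varrho>0$ (in the strict case $\varrho<0$) provides a constant positive push everywhere, which along $\langle e,x\rangle\ge 0$ dominates once we choose coordinates so that the $B_1^{-1}$-weighted drift of the queue part is sign-controlled. I would first do this algebra carefully, identifying a linear function $h(x)$ (a positive combination of $B_1^{-1}$-type functionals) along which $\langle b(x,u),\nabla h(x)\rangle \ge \delta>0$ uniformly in $u\in\varDelta$ and in $x$ outside a compact set (in fact, when $\varrho<0$, uniformly on all of $\RR^m$).

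Next I would invoke a standard transience criterion for non-degenerate diffusions (e.g. the form in \cite[Chapter~3]{ABG12}): if there is a function $F\in\Cc^2$, bounded, with $\Lg_u F\le 0$ outside a compact set for every admissible $u$, and $F$ non-constant with appropriate behavior at infinity along the direction picked out above, then the process is transient; equivalently one works with $F=\mathrm{e}^{-\kappa h}$ for small $\kappa>0$ and checks, using the uniform lower bound on $\langle b,\nabla h\rangle$ and the boundedness of $a$ and of $|\nabla h|^2$ (both $a$ and $\nabla h$ are \emph{constant} here since $h$ is linear and $a$ is diagonal-constant), that $\Lg_u F = \mathrm{e}^{-\kappa h}\bigl(\tfrac{\kappa^2}{2}\langle a\nabla h,\nabla h\rangle - \kappa\langle b,\nabla h\rangle\bigr)\le 0$ for $\kappa$ small enough. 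Since the controlled diffusion is uniformly elliptic and the bound is uniform over stationary Markov controls $U_t = v(X_t)$, this yields transience under \emph{any} such control. The boundedness of $F$ together with $F\to 0$ as $h\to+\infty$ and the supermartingale property of $F(X_t)$ then gives $\Prob_x(h(X_t)\to+\infty)>0$, hence $|X_t|\to\infty$ with positive probability; a $0$--$1$ argument (irreducibility) upgrades this to a.s., giving transience.

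For the borderline case $\varrho=0$ I would argue by contradiction: suppose $\process{X}$ is positive recurrent under some stationary Markov control $v$, with invariant probability measure $\pi$. Apply the same linear functional $h$; now $\langle b(x,u),\nabla h(x)\rangle = \langle e,x\rangle^-\bigl(1+\langle e,B_1^{-1}B_2u^s\rangle\bigr) + (\text{nonnegative queue-direction terms})\ge 0$ for all $x$ and $u$, with the inequality strict whenever $\langle e,x\rangle<0$. Integrating $\Lg_v h$ against $\pi$ (justified since $h$ has at most linear growth and $\pi$ has finite first moment for a positive recurrent nondegenerate diffusion with this drift structure — this integrability point needs a short separate argument, perhaps using a preliminary Lyapunov bound) forces $\int \Lg_v h\,\D\pi = 0$, hence $\pi\bigl(\{x:\langle e,x\rangle<0\}\bigr)=0$ and in fact the queue-direction terms vanish $\pi$-a.s. too. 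But then $\pi$ is supported on $\{\langle e,x\rangle\ge 0\}$ where, tracking the remaining terms, $\langle b(x,u),\nabla h(x)\rangle$ is identically zero only on a set of lower dimension, contradicting ellipticity (which makes $\pi$ equivalent to Lebesgue measure).

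The main obstacle I anticipate is purely algebraic: getting the coefficient of each term in $\langle b(x,u),\nabla h(x)\rangle$ to have the \emph{right uniform sign} simultaneously in the queueing half-space $\{\langle e,x\rangle\ge0\}$ and the idleness half-space $\{\langle e,x\rangle\le 0\}$, for the correct choice of linear functional $h$ — this is exactly where the diagonal/lower-triangular structure of $B_1$ and the inequality \cref{E-tran} must be used in tandem, and where the distinction between the dominant-pool case and the class-dependent-rate case ($B_2=0$) enters. For the positive-recurrence integrability step, the secondary obstacle is justifying that $\pi$ integrates the linear function $h$; I would handle this with an auxiliary exponential Lyapunov estimate of the type developed later in the paper, or by a localization/stopping-time argument letting the compact set exhaust $\RR^m$.
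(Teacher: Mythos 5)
Your approach for the transience part is essentially the paper's: you pick out the same linear functional $h(x)=\langle e,B_1^{-1}x\rangle$, and the algebra you defer does work out exactly as you hope, since $\langle e,B_1^{-1}B_1(x-\langle e,x\rangle^+u^c)\rangle=\langle e,x\rangle-\langle e,x\rangle^+=-\langle e,x\rangle^-$, so that
\begin{equation*}
\bigl\langle b(x,u),\nabla h(x)\bigr\rangle \,=\, -\varrho+\langle e,x\rangle^-\bigl(1+\langle e,B_1^{-1}B_2u^s\rangle\bigr)\,\ge\,-\varrho\,>\,0
\end{equation*}
uniformly on all of $\Rm\times\varDelta$ by \cref{E-tran}; the $u^c$-dependence cancels identically and no lower-triangular structure beyond the definition of $\varrho$ is needed. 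The paper composes $h$ with $\tanh(\beta\,\cdot)$ to get a bounded submartingale, you compose with $\E^{-\kappa(\cdot)}$ to get a nonnegative supermartingale; both transformations use the smallness of the parameter to beat the (constant) second-order term, and both conclude via a.s.\ convergence plus the recurrence/transience dichotomy for irreducible nondegenerate diffusions. This part of your proposal is sound and is not a genuinely different route.

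The one genuine gap is in your treatment of $\varrho=0$. You propose to integrate $\Lg_v h$ directly against the invariant measure $\uppi$, but $h$ is an unbounded linear functional, and the identity $\uppi(\Lg_v h)=0$ requires knowing $\int\abs{x}\,\uppi(\D x)<\infty$ (or a uniform integrability argument when removing the localization $\tau_r$), which is precisely what you cannot assume here: the exponential Lyapunov estimates you suggest borrowing are proved only under $\varrho>0$, and the monotone-convergence argument only controls the right-hand side of the localized It\^o identity, not the boundary term $\Exp[h(X_{t\wedge\tau_r})]$. The paper avoids this entirely by keeping the bounded test function $\tanh\bigl(\beta\langle e,B_1^{-1}x\rangle\bigr)$, integrating it against $\uppi$ (which is free since it is bounded), dividing by $\beta$, and letting $\beta\searrow0$: the second-order contribution is $\order(\beta^2)$ and vanishes after division, while the drift contribution converges to $\int\langle e,x\rangle^-\bigl(1+\langle e,B_1^{-1}B_2u^s\rangle\bigr)\uppi(\D x)$, whose positivity on the open set $\{\langle e,x\rangle^->1\}$ contradicts the full support of $\uppi$. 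Your final step is also slightly muddled: there are no residual ``queue-direction terms'' to dispose of (they cancel exactly as above), and no dimension-counting is needed --- the contradiction comes solely from the strictly positive integrand on an open set. I would rewrite the $\varrho=0$ case with the bounded test function to close the integrability gap.
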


\begin{proof}
Recall that the first and second order derivatives of the hyperbolic tangent function are
\begin{equation*}
\tanh'(x) = \frac{1}{\cosh^2(x)}\,; \qquad \tanh''(x) = -2 \frac{\tanh(x)}{\cosh^2(x)}
\end{equation*}
Let	$H(x) \df \tanh\bigl(\beta\langle e, B_1^{-1} x\rangle\bigr)$, with $\beta>0$.
Then
\begin{equation*}
\trace\bigl(a\nabla^2 H(x))\bigr) \,=\,
\beta^2\tanh''\bigl(\beta\langle e, B_1^{-1} x\rangle\bigr)
\babs{\upsigma\transp B_1^{-1}e}^2\,.
\end{equation*}
We have
\begin{equation}\label{PTtranA}
\begin{aligned}
\Lg_u H(x) &\,=\, \frac{1}{2}\trace\bigl(a\nabla^2 H(x)\bigr)
+ \bigl\langle b(x,u),\nabla H(x)\bigr\rangle \\
& \,=\, - \beta^2\frac{\tanh\bigl(\beta\langle e, B_1^{-1} x\rangle\bigr)}
{\cosh^2\bigl(\beta\langle e, B_1^{-1} x\rangle\bigr)}
\abs{\upsigma\transp B_1^{-1} e}^2\\
&\mspace{80mu}+ \frac{\beta}{\cosh^2\bigl(\beta\langle e, B_1^{-1} x
\rangle\bigr)}\biggl(\bigl\langle e, B_1^{-1}\ell\bigr\rangle 
+ \langle e,x\rangle^-
\Bigl(1 + \bigl\langle e , B_1^{-1}B_2 u^s\bigr\rangle\Bigr)\biggr)\,.
\end{aligned}
\end{equation}
Thus, for
$0<\beta < \langle e, B_1^{-1}\ell\rangle\, \abs{\upsigma\transp B_1^{-1} e}^{-2}$,  
we obtain $\Lg_u H(x) >0$ by  \cref{E-tran}.
Therefore, $\{H\bigl(X_t\bigr)\}_{t\ge0}$ is a bounded submartingale,
so it converges almost surely.
Since $X$ is irreducible, it can be either recurrent or transient. 
If it is recurrent, then $H$
should be constant a.e.\ in $\RR^m$, which is not the case.
Thus $X$ is transient.

We now turn to the case where $\varrho = 0$.
Suppose that the process $\{X(t)\}_{t\ge0}$ (under some stationary
Markov control) has an invariant probability measure $\uppi(\D{x})$.
It is well known that $\uppi$ must have a positive density.
Let $h_1(x)$ and $h_2(x)$ denote respectively the first and the second terms on
the right hand side of \cref{PTtranA}.
Applying It\^{o}'s formula to \cref{PTtranA} with $X_0 = x$ as in \cref{E-sde}, we obtain
\begin{equation}\label{PTtranB}
\Exp^\uppi\bigl[H(X_{t\wedge \tau_r})\bigr] - H(x) \,=\,
\sum_{i=1,2}\Exp^\uppi\Biggl[\int_0^{t\wedge \tau_r} h_i(X_s)\D s\Biggr]\,,
\end{equation}
where $\tau_r$ denotes the first exit time from $\sB_r$, $r>0$.
Note that $h_1(x)$ is bounded and $h_2(x)$ is non-negative.
Thus using dominated and monotone convergence, we can take limits in \cref{PTtranB}
as $r\to \infty$ for the terms on the right side to obtain
\begin{equation*}
\int_{\Rm} H(x)\uppi(\D{x}) - H(x) \,=\,
t \sum_{i=1,2} \int_{\Rm} h_i(x)\uppi(\D{x}), \qquad t\ge0\,.
\end{equation*}
Since $H(x)$ is bounded, we can divide both sides by $t$ and $\beta$ and take the
limit as $t\to\infty$ to get
\begin{equation}\label{PTtranC}
\int_{\Rm} \beta^{-1}h_1(x) \uppi(\D{x})
+ \int_{\Rm} \beta^{-1}h_2(x) \uppi(\D{x}) \,=\, 0\,.
\end{equation}
Since $\beta^{-1} h_1(x)$ tends to 0 uniformly in $x$ as $\beta\searrow 0$,
the first term on the left hand side of \cref{PTtranC} vanishes as
$\beta\searrow0$.
However, since $\beta^{-1}h_2(x)$ is bounded away from $0$ on the open set
$\{x\in\Rm \colon \langle e,x \rangle ^- >1\}$,
this contradicts the fact that $\uppi(\D{x})$ has full support.
\end{proof}

\begin{remark}
In the proof of \cref{Ttran}, the function $H(x)$ is a bounded test function which was chosen so that $\Lg_u H(x) \ge 0$. Assume that $X_t$ is recurrent.
Since $H(X_t)$ converges a.s. (being a bounded submartingale), and
since $X_t$ `visits every open neighborhood' in $\Rm$ with probability $1$,
it follows that $H$ must be a constant function which is a contradiction.

The formalism behind the above argument is as follows:
Let $\uptau$ be the first hitting time to the unit ball $B$ centered at $x=0$.
If $\Exp_x$ denotes the expectation operator on the canonical space of the Markov process
$\{X_t\}_{t\ge0}$ with initial condition $X_0= x$,
then by Dynkin's formula we obtain $\Exp_x[H(X_\uptau)] \ge H(x)$.
If the process is recurrent, then of course $\Prob_x(\uptau<\infty)=1$,
which gives
$\Exp_x[H(X_\uptau)]\le \sup_{y\in B}\, H(y)$.
Thus $\sup_{y\in B}\, H(y)\ge H(x)$ for all $x\in\Rm$ which is not true.
Moving the center of the the ball $B$ to an arbitrary point $z$, and denoting
it as $B(z)$, we similarly
have
$$\sup_{y\in B(z)}\, H(y)\ge H(x)\qquad\forall x,z\in\Rm\,.$$
This implies that $H(x)=$ constant which is a contradiction and hence $X$ is transient.

\end{remark}


\begin{proposition}\label{Ctran}
Suppose that $\varrho_n <0$.
Then the state process $\process{X^n}$ of the $n^{\mathrm{th}}$
system is transient under any
stationary Markov scheduling policy. In addition, if $\varrho^n = 0$, then $\process{X^n}$ cannot be positive recurrent. 
\end{proposition}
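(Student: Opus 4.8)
The plan is to mirror the proof of \cref{Ttran} at the prelimit level, replacing the continuous-state test function $H(x)=\tanh(\beta\langle e,B_1^{-1}x\rangle)$ by its natural discrete analogue on $\ZZ_+^m$ (or equivalently, work in the diffusion-scaled coordinates $\Hat{x}\in\sS^n$ and use $H_n(\Hat{x})=\tanh(\beta\langle e,(B_1^n)^{-1}\Hat{x}\rangle)$), and then carry out the same submartingale/recurrence dichotomy, this time using the jump generator $\cL_z^n$ of \cref{EcL} in place of the diffusion generator $\Lg_u$. The key structural inputs are: first, the drift representation \cref{Ebn}, which gives
\begin{equation*}
\langle e,(B_1^n)^{-1}b^n(\Hat{x},\Hat{z})\rangle \,=\,
\langle e,(B_1^n)^{-1}\ell^n\rangle
+ \langle e,\Hat{x}\rangle^-\Bigl(1+\langle e,(B_1^n)^{-1}B_2^n u^s\rangle\Bigr)
+ \Hat{\vartheta}^n(\Hat{x},\Hat{z})\Bigl(1+\langle e,(B_1^n)^{-1}B_2^n u^s\rangle\Bigr)\,,
\end{equation*}
where I have used that $(B_1^n)^{-1}B_1^n=I$ kills the $\Hat x-\langle e,\Hat x\rangle^+u^c$ term up to the $\langle e,\cdot\rangle$ pairing; second, the positivity \cref{E-tran}, which by the same functional-form argument as in \cref{S3} holds with $B_1,B_2$ replaced by $B_1^n,B_2^n$ for all $n$ large (since $\eta_{ij}^n\to\eta_{ij}>0$, or $B_2^n=0$ in the class-dependent case); third, the irreducibility of $\process{X^n}$ as a continuous-time Markov chain under any stationary Markov policy. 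Note also $\Hat\vartheta^n\ge0$ by \cref{ED2.2B} together with $\Hat q^n\ge -$something—more precisely $\langle e,\Hat q^n\rangle,\langle e,\Hat y^n\rangle\ge0$ componentwise from work conservation, so $\Hat\vartheta^n(\Hat x,\Hat z)\ge0$ always. Hence the last two terms in the display above are nonnegative.

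First I would compute $\cL_z^n H_n$ directly. Writing $\delta_n\df 1/\sqrt n$ and $v\df (B_1^n)^{-1}e$, we have $H_n(\Hat x)=\tanh(\beta\langle v,\Hat x\rangle)$, and for a unit step $\pm\delta_n e_i$ in $\Hat x$ the increment of $\langle v,\Hat x\rangle$ is $\pm\delta_n v_i$. A second-order Taylor expansion of $\tanh$ gives, for each $i$,
\begin{equation*}
\lambda_i^n\bigl(H_n(\Hat x+\delta_n e_i)-H_n(\Hat x)\bigr)
+ \mu_{ij}^n z_{ij}\bigl(H_n(\Hat x-\delta_n e_i)-H_n(\Hat x)\bigr)
\,=\, \beta\, b_i^n(\Hat x,\Hat z)\, v_i\,\tanh'(\beta\langle v,\Hat x\rangle)
+ R_{i}^n(\Hat x)\,,
\end{equation*}
after summing the service terms over $j\in\cJ(i)$ and using \cref{E-GnC}; here the leading term comes from combining the two first-order contributions (whose $\mathscr{O}(\delta_n)$ parts cancel in the symmetric combination and leave the drift), and $R_i^n$ collects the $\mathscr{O}(\delta_n^2)$ curvature contributions, which are $\tfrac12\delta_n^2 v_i^2\bigl(\lambda_i^n+\sum_j\mu_{ij}^n z_{ij}\bigr)\tanh''(\beta\langle v,\Hat x\rangle)$ plus higher order, hence uniformly bounded by $C\beta^2 n^{-1}|v|^2\,\mathrm{sech}^2(\cdot)\,(\text{rate})$; the rate $\lambda_i^n+\sum_j\mu_{ij}^nz_{ij}$ is itself $\mathscr{O}(n)$ on the relevant region, so $R_i^n$ is $\mathscr{O}(1)$ in $n$ but, crucially, carries the factor $\beta^2\,\mathrm{sech}^2(\beta\langle v,\Hat x\rangle)$, exactly as $h_1$ does in \cref{PTtranA}. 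Summing over $i$ and using $\sum_i b_i^n v_i=\langle e,(B_1^n)^{-1}b^n\rangle$ together with the displayed drift identity, I obtain
\begin{equation*}
\cL_z^n H_n(\Hat x) \,=\,
\beta\,\mathrm{sech}^2(\beta\langle v,\Hat x\rangle)\Bigl(\langle e,(B_1^n)^{-1}\ell^n\rangle
+ (\langle e,\Hat x\rangle^- + \Hat\vartheta^n)\bigl(1+\langle e,(B_1^n)^{-1}B_2^n u^s\rangle\bigr)\Bigr)
+ \mathscr{O}\bigl(\beta^2\,\mathrm{sech}^2(\beta\langle v,\Hat x\rangle)\bigr)\,.
\end{equation*}
Since $-\langle e,(B_1^n)^{-1}\ell^n\rangle=\varrho_n<0$, i.e. $\langle e,(B_1^n)^{-1}\ell^n\rangle>0$, and the bracketed nonnegative terms only help, for $\beta>0$ small enough (depending on $n$, the constant in the error term, and the $\mathrm{sech}^2$ bound, exactly as the threshold $\beta<\langle e,B_1^{-1}\ell\rangle|\upsigma\transp B_1^{-1}e|^{-2}$ in \cref{Ttran}) we get $\cL_z^n H_n(\Hat x)>0$ for all $\Hat x\in\sS^n$. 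Transcribing back to the $x$-coordinates via \cref{ED2.1A}, this says $\cL_z^n H(x)>0$ on $\ZZ_+^m$ for the corresponding bounded function $H$.

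Then I would finish exactly as in \cref{Ttran}: $H(X^n_t)$ is a bounded submartingale under $\Prob_x$, hence converges a.s.; since $\process{X^n}$ is an irreducible continuous-time chain it is either recurrent or transient; if recurrent, the Dynkin-formula argument (as spelled out in the remark after \cref{Ttran}, with $\uptau$ the first hitting time of a fixed finite set and using $\Prob_x(\uptau<\infty)=1$) forces $H$ to be constant, which it is not; so $\process{X^n}$ is transient. For the case $\varrho_n=0$: suppose $\process{X^n}$ has an invariant probability measure $\uppi^n$, which for an irreducible chain has full support on $\ZZ_+^m$. Split $\cL_z^n H_n = h_1^n + h_2^n$ where $h_1^n$ gathers the $\beta\,\mathrm{sech}^2(\cdot)\langle e,(B_1^n)^{-1}\ell^n\rangle$ term (which is $0$ now, but keep it anyway for the $\beta$-scaling argument) plus the $\mathscr{O}(\beta^2\,\mathrm{sech}^2)$ error, and $h_2^n$ the nonnegative term $\beta\,\mathrm{sech}^2(\cdot)(\langle e,\Hat x\rangle^-+\Hat\vartheta^n)(1+\langle e,(B_1^n)^{-1}B_2^n u^s\rangle)$; then $h_1^n=\mathscr{O}(\beta^2)$ uniformly and $h_2^n\ge0$, and $h_2^n$ is bounded away from $0$ on a nonempty set of states (those with $\langle e,\Hat x\rangle^->1$, which intersect $\sS^n$). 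Applying Dynkin's formula up to exit times from balls, passing to the limit using that $h_1^n$ is bounded and $h_2^n\ge0$ (monotone/dominated convergence, as in \cref{PTtranB,PTtranC}), dividing by $t$ and by $\beta$, letting $t\to\infty$ and then $\beta\searrow0$, the $h_1^n$ contribution vanishes while the $h_2^n$ contribution is strictly positive by full support—a contradiction. Hence $\process{X^n}$ cannot be positive recurrent.

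The main obstacle is the error-term bookkeeping in the discrete generator: unlike the diffusion case, where the curvature term $h_1$ comes out exactly as $-\beta^2\tanh/\cosh^2\,|\upsigma\transp B_1^{-1}e|^2$, here one must show that all the $\mathscr{O}(\delta_n^2)$ remainders from the Taylor expansion of $\tanh$—including the contribution of the third derivative and the fact that the transition rates grow like $n$ while $\delta_n^2=1/n$—can be absorbed into a term of the form $C_n\,\beta^2\,\mathrm{sech}^2(\beta\langle v,\Hat x\rangle)$ with $C_n$ bounded uniformly in $\Hat x$ (and, ideally, in $n$). The point is that the transition rates $\lambda_i^n+\sum_{j\in\cJ(i)}\mu_{ij}^nz_{ij}$ satisfy $\lambda_i^n+\sum_j\mu_{ij}^nz_{ij}=\mathscr{O}(n)+\mathscr{O}(\sqrt n\,|\Hat x|)$ (the $n$ part from the centering, the $\sqrt n|\Hat x|$ part genuinely state-dependent), so the $\delta_n^2\times(\text{rate})$ product is $\mathscr{O}(1)+\mathscr{O}(|\Hat x|/\sqrt n)$; the second piece would spoil uniformity, but it is multiplied by $\tanh''(\beta\langle v,\Hat x\rangle)=\mathscr{O}(\mathrm{sech}^2)$ which decays exponentially in $|\langle v,\Hat x\rangle|$, so the product is still bounded. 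Making this decay-beats-growth estimate precise—ideally reducing it to a lemma of the form "there is $C$ with $|\Hat x|\,\mathrm{sech}^2(\beta\langle v,\Hat x\rangle)\le C/\beta$"—is the crux; once it is in hand, the choice of $\beta$ and the rest of the argument are routine and identical to \cref{Ttran}. Alternatively, and perhaps more cleanly, one can avoid the uniform-in-$n$ issue entirely: $n$ is fixed throughout this proposition, so it suffices to choose $\beta=\beta(n)$ small enough that the $n$-dependent constant is dominated, which is all that is needed for transience and non-positive-recurrence of the single chain $\process{X^n}$.
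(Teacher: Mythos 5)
Your proposal follows essentially the same route as the paper: apply the same test function $H(\Hat x)=\tanh\bigl(\beta\langle e,(B_1^n)^{-1}\Hat x\rangle\bigr)$ to the discrete generator, use a Taylor/integral-remainder identity (the paper's \cref{Eident}) to split $\widehat\cL^n_{\Hat z}H$ into the drift term $\beta\,\cosh^{-2}(\cdot)\bigl(\langle e,(B_1^n)^{-1}\ell^n\rangle+(\Hat\vartheta^n+\langle e,\Hat x\rangle^-)(1+\langle e,(B_1^n)^{-1}B_2^nu^s\rangle)\bigr)$ plus an $\order(\beta^2\cosh^{-2}(\cdot))$ remainder, choose $\beta$ small, and finish with the identical submartingale/irreducibility and invariant-measure arguments. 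Your displayed drift identity and the sign observations ($\Hat\vartheta^n\ge0$, \cref{E-tran} for $B_1^n,B_2^n$) are exactly what the paper uses.

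One correction to what you call the crux. The alleged $\order(\sqrt n\,\abs{\Hat x})$ growth of the service rates does not occur: since $z_{ij}\le N_j^n$ always, one has $\sum_{j}\mu_{ij}^nz_{ij}\le\sum_j\mu_{ij}^nN_j^n=\order(n)$ uniformly over the state space, equivalently $\abs{\Hat z_{ij}}=\order(\sqrt n)$ and hence $\abs{b^n_i(\Hat x,\Hat z)}=\order(\sqrt n)$ uniformly in $\Hat x$ (note from \cref{E-GnC} that $b^n$ depends on $\Hat x$ only through $\Hat z$). Consequently the curvature remainders are bounded by $C\beta^2\cosh^{-2}\bigl(\beta\langle e,(B_1^n)^{-1}\Hat x\rangle\bigr)$ with $C$ independent of $\Hat x$ (and, by \cref{EHW}, of $n$), and no ``decay-beats-growth'' estimate is needed. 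This matters because the lemma you propose, $\abs{\Hat x}\,\cosh^{-2}\bigl(\beta\langle v,\Hat x\rangle\bigr)\le C/\beta$, is false: the left-hand side is unbounded along directions in which $\langle v,\Hat x\rangle$ stays bounded while $\abs{\Hat x}\to\infty$. Had that estimate really been required, the proof would break; fortunately the uniform bound on $z_{ij}$ makes it unnecessary, which is precisely why the paper can dispose of this proposition by citing \cref{Eident} and \cref{Ebn} and deferring to the proof of \cref{Ttran}.
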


\begin{proof}
The proof mimics that of \cref{Ttran}.
We apply the function $H$ in that proof to the operator
$\widehat\cL_{\Hat{z}}^n$ in \cref{E-Gn}, and use the identity
\begin{equation}\label{Eident}
H\Bigl(x\pm \tfrac{1}{\sqrt n} e_i\Bigr) - H(x) \mp
\frac{1}{\sqrt n}\partial_{x_i} H(x)
\,=\, \frac{1}{n}\int_0^1 (1-t)\,
\partial_{x_ix_i} H\Bigl(x\pm \tfrac{t}{\sqrt n} e_i\Bigr) \,\D{t}\,,
\end{equation}
to express the first and second order incremental quotients,
together with \cref{Ebn} which implies that
\begin{multline*}
\bigl\langle b^n(\Hat{x},\Hat{z}),\nabla H(\Hat{x})\bigr\rangle
\,=\,
\frac{\beta}{\cosh^2\bigl(\beta\langle e, (B_1^n)^{-1} \hat{x}
\rangle\bigr)}\biggl(\bigl\langle e, (B_1^n)^{-1}\ell^n\bigr\rangle \\
+ \bigl(\Hat{\vartheta}^{n}(\Hat{x},\Hat{z})+\langle e,\hat{x}\rangle^-\bigr)
\Bigl(1 + \bigl\langle e , (B_1^n)^{-1}B_2^n u^s\bigr\rangle\Bigr)\biggr)\,.
\end{multline*}
The rest follows exactly as in the proof of \cref{Ttran}.
%
\end{proof}

\subsection{Spare capacity and average idleness}

It is shown in \cite{AHP18,APS19} that if
the diffusion limit of the `V' network with no abandonment has a invariant
distribution $\uppi$ under some stationary Markov control, then
$\varrho$ represents the `average idleness' of the system, that is,
$\varrho = \int_{\Rm} \langle e,x \rangle ^- \uppi(\D{x})$.
In calculating this average
for multi-pool networks, idle servers are not weighted equally across
different pools and the term $\bigl\langle e,B_1^{-1}B_2u^s(x)\bigr\rangle$
appears in the expression, see \cref{EGP-A}.
It is important to note that only the control on the idleness allocations among
server pools $u^s$ appears in the identity, and the control
component $u^c$ does not.

\begin{theorem}\label{GP}
Consider a network with a dominant server pool, or with class-dependent service rates,
and suppose that $\varrho>0$.
Let $\uppi_u$ denote the invariant invariant probability measure
corresponding to a stationary Markov control $u\in\Usm$, whose existence follows
from \cref{thm-main}\,\ttup{iii}.
Then
\begin{equation}\label{EGP-A}
\varrho \,=\,
\int_{\Rm} \Bigl(1 + \bigl\langle e,B_1^{-1}B_2u^s(x)\bigr\rangle\Bigr)
\langle e,x \rangle ^-\, \uppi_u(\D{x}).
\end{equation} 
\end{theorem}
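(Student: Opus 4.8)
The plan is to integrate the action of the generator $\Lg_u$ on the linear test function $f(x)\df\langle e,B_1^{-1}x\rangle$ against the invariant measure $\uppi_u$. First I would compute $\Lg_u f$: since $\nabla f\equiv (B_1^{-1})\transp e$ is constant and $\nabla^2 f\equiv 0$, we have $\Lg_u f(x)=\langle b(x,u),(B_1^{-1})\transp e\rangle=\langle e,B_1^{-1}b(x,u)\rangle$. Substituting the drift representation \cref{Eb}, using that $u^c\in\varDelta_c$ so $\langle e,u^c\rangle=1$, together with the identity $\langle e,x\rangle^+-\langle e,x\rangle=\langle e,x\rangle^-$ and the definition \cref{Evarrho} of $\varrho$, a short computation gives
\begin{equation*}
\Lg_u f(x)\,=\,-\varrho+\bigl(1+\langle e,B_1^{-1}B_2 u^s(x)\rangle\bigr)\langle e,x\rangle^-\,.
\end{equation*}
Thus \cref{EGP-A} is exactly the stationarity identity $\int_{\Rm}\Lg_u f\,\D\uppi_u=0$, and it remains to justify this for the unbounded $f$.

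To do so, note that since $\varrho>0$, \cref{thm-main}\,\ttup{iii} (through \cref{Tstable}) supplies a common Lyapunov function of exponential growth, so $\uppi_u$ has exponential tails; in particular $\int_{\Rm}|x|\,\uppi_u(\D x)<\infty$, and hence both $f$ and $\Lg_u f$, which are of at most linear growth (here using that $u^s$ is valued in the compact simplex $\varDelta_s$), are $\uppi_u$-integrable. Starting \cref{E-sde} at $X_0\sim\uppi_u$ and applying It\^o's formula, the process $M_t\df f(X_t)-f(X_0)-\int_0^t\Lg_u f(X_s)\,\D s$ equals $\int_0^t\langle(B_1^{-1})\transp e,\upsigma\,\D W_s\rangle$, a martingale with the \emph{deterministic} quadratic variation $\langle M\rangle_t=\abs{\upsigma\transp(B_1^{-1})\transp e}^2\,t$; consequently $\Exp^{\uppi_u}[M_t]=0$. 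Since $X_t\sim\uppi_u$ for every $t$, stationarity and Fubini yield $\Exp^{\uppi_u}[f(X_t)]=\Exp^{\uppi_u}[f(X_0)]$ and $\Exp^{\uppi_u}\bigl[\int_0^t\Lg_u f(X_s)\,\D s\bigr]=t\int_{\Rm}\Lg_u f\,\D\uppi_u$. Combining, $t\int_{\Rm}\Lg_u f\,\D\uppi_u=0$ for all $t>0$, so $\int_{\Rm}\Lg_u f\,\D\uppi_u=0$, which rearranges to \cref{EGP-A}.

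The computation of $\Lg_u f$ is routine; the only point requiring care is the passage from It\^o/Dynkin to the stationary identity for an unbounded test function, and this is precisely where the linear structure of $f$ helps — the stochastic part has constant integrand, hence bounded deterministic quadratic variation, and no truncation is needed. As an alternative one could localize using the exit times $\tau_r$ from $\sB_r$, exactly as in the proof of \cref{Ttran}, and pass to the limit $r\to\infty$ via the tail/Lyapunov estimates of \cref{S4}; this mirrors the corresponding `V'-network identity in \cite{AHP18,APS19}. I expect no serious obstacle here.
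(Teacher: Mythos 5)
Your proposal is correct, and it takes a genuinely different (and cleaner) route than the paper. The paper works with the bounded, doubly truncated test functions $f_r=\chi_r\bigl(\breve\chi_r(\langle e,B_1^{-1}x\rangle)\bigr)$, for which the stationarity identity $\uppi_u(\Lg_u f_r)=0$ is immediate, and then decomposes $\Lg_u f_r$ into four terms $h_1,\dotsc,h_4$: the two second-order terms $h_3,h_4$ (coming from the curvature of the truncations) are shown to vanish as $r\to\infty$, while $h_1$ and the nonnegative term $h_2$ converge to $-\varrho$ and to the right-hand side of \cref{EGP-A} by monotone/dominated convergence — so integrability of $\langle e,x\rangle^-$ essentially falls out of the limiting procedure. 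You instead apply It\^o's formula directly to the unbounded linear function $f(x)=\langle e,B_1^{-1}x\rangle$; your computation of $\Lg_u f$ is exactly right (the Hessian vanishes and $\langle e,u^c\rangle=1$ collapses the drift to $-\varrho+(1+\langle e,B_1^{-1}B_2u^s\rangle)\langle e,x\rangle^-$), and the key points that make the unbounded test function admissible are correctly identified: since $\upsigma$ is constant, the local-martingale part is a genuine (scaled Brownian) martingale, and the Fubini/stationarity steps only need $\int_{\Rm}\abs{x}\,\uppi_u(\D{x})<\infty$, which you legitimately extract from the exponential tails guaranteed by \cref{Tstable} and \cref{R4.1} (no circularity, since the theorem's hypotheses already invoke \cref{thm-main}\,(iii)). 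What the paper's truncation buys is independence from the moment estimates of \cref{S4} — integrability is a byproduct rather than an input; what your argument buys is the elimination of the second-order remainder terms and of the limit in $r$ altogether. Your suggested fallback (localizing with $\tau_r$ as in \cref{Ttran}) would also work but is unnecessary given the constant diffusion matrix.
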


\begin{proof}
The proof is similar to that of \cite[Corollary~5.1]{APS19},
but more involved for the multiclass multi-pool networks. 
We first recall some definitions and notations.
Let $\chi_r(t)$, $\breve{\chi}_r(t)$, $r>1$ be smooth, concave and convex functions,
respectively, defined by
\begin{equation*}
\chi_r(t)\,=\,
\begin{cases}
t\,, &t\le\,r-1\,,\\
r-\frac{1}{2}\,, & t\ge r\,,
\end{cases}
\qquad\text{and}\quad
\breve{\chi}_r(t)\,=\,
\begin{cases}
t\,, &t\ge 1-r\,,\\
\frac{1}{2}-r\,, & t\le -r\,.
\end{cases}
\end{equation*}
Let $g_r(x)=\breve{\chi}_r\bigl(\langle e, B_1^{-1}x\rangle\bigr)$,
and $f_r(x) = \chi_r\bigl(g_r(x)\bigr)$.
A straightforward calculation shows that 
\begin{align*}
\langle b(x,u),\nabla f_r(x)\rangle &\,=\, h_1(x) + h_2(x)\,,\\
\frac{1}{2}\trace\bigl(a(x) \nabla^2 f_r(x)\bigr) &\,=\, h_3(x) + h_4(x)\,,
\end{align*}
where
\begin{align*}
h_1(x)&\,\df\, -\varrho\chi'_r\bigl(f(x)\bigr)\,
\breve{\chi}'_r\bigl(\langle e, B_1^{-1} x \rangle\bigr)\,,\\
h_2(x)&\,\df\,
\bigl[ 1 + \bigl\langle e,B_1^{-1}B_2 u^s \bigr\rangle \bigl]
\chi'_r\bigl(f(x)\bigr)\,\breve{\chi}'_r\bigl(\langle e, B_1^{-1} x \rangle\bigr)
\langle e,x \rangle^-\,,\\
h_3(x)&\,\df\, \frac{1}{2} \chi_r''\bigl(f(x)\bigr)\,
\bigl(\breve{\chi}_r'\bigl(\langle e, B_1^{-1} x \rangle\bigr)\bigr)^2
\babs{\upsigma\transp B_1^{-1}e}^2\,,\\
h_4(x)&\,\,\df\, \frac{1}{2}\chi_r'\bigl(f(x)\bigr)
\breve{\chi}_r''\bigl(\langle e, B_1^{-1} x \rangle\bigr)
\babs{\upsigma\transp B_1^{-1}e}^2\,.
\end{align*}
We note that $g_r(x)$ is positive and bounded below away from $0$, and
$f_r(x)$ is smooth, bounded, and has bounded derivatives.
Also note that $h_i$, $i=1,2,3$, are bounded, and $h_2$ is nonnegative.
Therefore, if $\{X(t)\}_{t\ge 0}$ is positive recurrent with an invariant probability
measure $\uppi_u(\D{x})$, a straightforward
application of It\^o's formula shows
that $\uppi_u\bigl(\Lg_u f_r\bigr)=0$. 
Therefore, we obtain
\begin{equation}\label{PGP-A}
\uppi_u(-h_1)\,=\, \uppi_u(h_2) + \uppi_u(h_3) + \uppi_u(h_4).
\end{equation}
By the definition of $\chi_r$ and $\breve{\chi}_r$,
it is straightforward to verify that 
\begin{equation}\label{PGP-B}
\lim_{r\to\infty} \uppi_u(h_3) \,=\, \lim_{r\to\infty} \uppi_u(h_4) \,=\, 0\,.
\end{equation}
In addition, using dominated convergence theorem
\begin{equation}\label{PGP-C}
\begin{aligned}
&\lim_{r\to\infty} \uppi_u(h_1) \,=\, -\varrho\,,\\
&\lim_{r\to\infty} \uppi_u(h_2) \,=\,
\int_\Rm\bigl( 1 + \bigl\langle e,B_1^{-1}B_2 u^s \bigr\rangle \bigl)
\langle e,x \rangle^-\,\uppi_u(\D{x})\,.
\end{aligned}
\end{equation}
Combining \cref{PGP-A,PGP-B,PGP-C}, we obtain \cref{EGP-A}.
\end{proof}

\begin{remark}
Using \cref{GP} and the drift in \cref{S2.4.1,S2.4.2}, it is easy
to verify that in the case of
a network with a dominant server pool we have
\begin{equation*}
\varrho = \int_{\Rm} \Biggl[1 + \sum_{i\in\cI} \sum_{j\in\cJ_1(i)}
\bigl(\tfrac{\mu_{ij}}{\mu_{i1}} - 1\bigr)u_{j}^s \Biggr]
\langle e,x \rangle ^- \uppi_u(\D{x})\,,
\end{equation*}
whereas in the case of a network
with class-dependent service rates, \cref{EGP-A} takes the form
\begin{equation*}
\varrho \,=\,
\int_{\Rm} \langle e,x \rangle ^- \uppi_u(\D{x}).
\end{equation*}
\end{remark}

\section{Uniform exponential ergodicity of the diffusion limit}\label{S4}
 In this section we show that if $\varrho>0$ then the
diffusion limit of a network with a dominant server pool, or with
class-dependent service rates,
is uniformly exponentially ergodic and the invariant distributions
have exponential tails.

We start by reviewing the notion of \emph{uniform exponential ergodicity} for
a controlled diffusion.
We do this under fairly general hypotheses.
Consider a
controlled diffusion process $X = \{X_{t},\,t\ge0\}$
which takes values in the $m$-dimensional Euclidean space $\RR^{m}$, and
is governed by the It\^o  equation
\begin{equation}\label{E-sde0}
\D{X}_{t} \,=\,b\bigl(X_{t},v(X_t)\bigr)\,\D{t} + \upsigma(X_{t})\,\D{W}_{t}\,.
\end{equation}
All random processes in \cref{E-sde0} live in a complete
probability space $(\Omega,\sF,\Prob)$.
The process $W$ is a $m$-dimensional standard Wiener process independent
of the initial condition $X_{0}$.
The function $v$ maps $\RR^m$ to a compact, metrizable set $\Act$ and is 
Borel measurable. The collection of such functions comprising of the
set of stationary Markov controls is denoted by $\Usm$.

The parameters  of the equation \eqref{E-sde0} satisfy the following:
\begin{itemize}
\item[(1)]
\emph{Local Lipschitz continuity:\/}
The functions
$b\colon\RR^{m}\times\Act\to\RR^{m}$ and 
$\upsigma\colon\RR^{m}\to\RR^{m\times m}$
are continuous, and satisfy
\begin{equation*}
\abs{b(x,u)-b(y, u)} + \norm{\upsigma(x) - \upsigma(y)}
\,\le\,C_{R}\,\abs{x-y}\qquad\forall\,x,y\in B_R\,,\ \forall\, u\in\Act\, .
\end{equation*}
for some constant $C_{R}>0$ depending on $R>0$.

\item[(2)]
\emph{Affine growth condition:\/}
For some $C_0>0$, we have
\begin{equation*}
\sup_{u\in\Act}\; \langle b(x,u),x\rangle^{+} + \norm{\upsigma(x)}^{2}\,\le\,C_0
\bigl(1 + \abs{x}^{2}\bigr) \qquad \forall\, x\in\RR^{m}\,.
\end{equation*}
\item[(3)]
\emph{Nondegeneracy:\/}
For each $R>0$, it holds that
\begin{equation*}
\sum_{i,j=1}^{m} a^{ij}(x)\xi_{i}\xi_{j}
\,\ge\,C^{-1}_{R} \abs{\xi}^{2} \qquad\forall\, x\in B_{R}\,,
\end{equation*}
and for all $\xi=(\xi_{1},\dotsc,\xi_{m})\transp\in\RR^{m}$,
where $a= \frac{1}{2}\upsigma \upsigma\transp$.
\end{itemize}

It is well known that, under hypotheses (1)--(2),
\cref{E-sde0} has a unique strong solution 
which is also a strong Markov process for any $v\in\Usm$ \cite{Gyongy-96}.
We let $\Exp^v_x$ denote the expectation operator on the canonical space of
the process controlled by $v$, with initial condition $X_0=x$.
Let $\uptau(A)$ denote the first exit time from the set $A\in\Rm$.

We say that the process $\{X_t\}_{t\ge0}$ is
\emph{uniformly exponentially ergodic} if
for some ball $\sB_\circ$ there exist $\delta_\circ>0$ 
and $x_\circ\in \Bar{\sB}_\circ^{\mathsf c}$ such that
$\sup_{v\in\Usm}\,\Exp_{x_\circ}^v[\E^{\delta_\circ\, \uptau(\sB^{\mathsf c}_\circ)}]<\infty$.

We let
$\widehat\cA$ denote the operator
\begin{equation*}
\widehat\cA \phi(x) \,\df\, \frac{1}{2}\trace\left(a(x)\nabla^{2}\phi(x)\right)
+ \max_{u\in\Act}\, \bigl\langle b(x,u),\nabla \phi(x)\bigr\rangle\,,\qquad x\in\Rm\,,
\end{equation*}
for $\phi\in\Cc^2(\Rm)$.
For a locally bounded, Borel measurable function
$f\colon\Rm\to\RR$, which is bounded from below in
$\Rm$, i.e., $\inf_\Rm f>-\infty$,
we define the generalized principal eigenvalue of $\widehat\cA+f$ by 
\begin{equation*}
\Lambda(f)\,\df\,\inf\,\Bigl\{\lambda\in\RR\,
\colon \exists\, \varphi\in\Sobl^{2,m}(\Rm),
\, \varphi>0, \, \widehat\cA\varphi + (f-\lambda)\varphi\le 0 \text{\ a.e. in\ } \Rm
\Bigr\}\,,
\end{equation*}
where $\Sobl^{2,m}$ is a local Sobolev space.
We have the following equivalent characterizations of
uniform exponential ergodicity.
This is a straightforward extension of \cite[Theorem~3.1]{APS19}
for controlled diffusions, and is stated without proof.
Recall that a map $f\colon\Rm\to\RR$ is called \emph{coercive}, or \emph{inf-compact},
if $\inf_{\sB_r^{\mathsf c}}\,f\to\infty$ as $r\to\infty$.

\begin{theorem}\label{T4.1}
The following are equivalent.
\begin{enumerate}
\item[{\upshape(}a\/{\upshape)}]
For some ball $\sB_\circ$ there exists $\delta_\circ>0$ 
and $x_\circ\in \Bar{\sB}_\circ^{\mathsf c}$ such
that $\sup_{v\in\Usm}\,\Exp_{x_\circ}[\E^{\delta_\circ\, \uptau(\sB^{\mathsf c}_\circ)}]<\infty$.

\item[{\upshape(}b\/{\upshape)}]
For every ball $\sB$ there exists $\delta>0$ such that
$\sup_{v\in\Usm}\,\Exp_{x}^v[\E^{\delta\, \uptau(\sB^{\mathsf c})}]<\infty$ for all $x\in\sB^{\mathsf c}$.

\item[{\upshape(}c\/{\upshape)}]
For every ball $\sB$, there exists a coercive function $\Lyap\in\Sobl^{2,p}(\Rm)$,
$p>d$, with $\inf_\Rm\,\Lyap\ge1$, and positive
constants $\kappa_0$ and $\delta$ such that
\begin{equation}\label{ET4.1A}
\widehat\cA\,\Lyap(x) \,\le\, \kappa_0\,\Ind_{\sB}(x) - \delta\Lyap(x)
\qquad\forall\,x\in\Rm\,.
\end{equation}

\item[{\upshape(}d\/{\upshape)}]
\Cref{E-sde0} is recurrent, and
 $\Lambda(\Ind_{\sB^{\mathsf c}})<1$ for every ball $\sB$.
\end{enumerate}
\end{theorem}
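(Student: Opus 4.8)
The plan is to establish the cycle of implications $(a)\Rightarrow(d)\Rightarrow(c)\Rightarrow(b)\Rightarrow(a)$, following the uncontrolled version in \cite[Theorem~3.1]{APS19} and the general machinery of \cite[Chapter~3]{ABG12}, and checking at each step that all constants are uniform over the control. Throughout one uses that hypotheses~(1)--(2) guarantee non-explosion of \cref{E-sde0} for every $v\in\Usm$ (via the Lyapunov test with $1+\abs{x}^2$), and that the nondegeneracy~(3) yields interior Harnack inequalities and Krylov--Safonov estimates with constants depending only on $R$, on $C_R$, and on $\sup_{u\in\Act}\abs{b(\cdot,u)}$ on $\sB_R$; since $\Act$ is compact and $b$ is continuous, these are independent of $v$. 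The control enters only through the elementary fact that $\widehat\cA$ pointwise dominates the extended generator of the process driven by any $v\in\Usm$, so that every Foster--Lyapunov or principal-eigenvalue inequality written for $\widehat\cA$ yields the corresponding probabilistic statement \emph{uniformly} over $v$.

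For $(c)\Rightarrow(b)$, fix a ball $\sB$ and the triple $(\Lyap,\kappa_0,\delta)$ of \cref{ET4.1A}. For $x\in\sB^c$ and $v\in\Usm$, the process stays in $\sB^c$ on $[0,\uptau(\sB^c))$, where $\widehat\cA\Lyap\le-\delta\Lyap$; applying Dynkin's formula to $t\mapsto\E^{\delta\,(t\wedge\uptau(\sB^c)\wedge\uptau(\sB_R))}\Lyap(X_t)$ with $\sB_R\supset\sB$ shows this is a supermartingale, so $\Exp_x^v\bigl[\E^{\delta(t\wedge\uptau(\sB^c)\wedge\uptau(\sB_R))}\bigr]\le\Lyap(x)$ because $\Lyap\ge1$. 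Letting $R\to\infty$ (non-explosion) and then $t\to\infty$ (monotone convergence) gives $\sup_{v\in\Usm}\Exp_x^v\bigl[\E^{\delta\uptau(\sB^c)}\bigr]\le\Lyap(x)<\infty$ for all $x\in\sB^c$; the implication $(b)\Rightarrow(a)$ is trivial. For $(a)\Rightarrow(d)$ one first upgrades~(a) to~(b) by a propagation argument: using the strong Markov property, the uniform-in-$v$ lower bound (Harnack) on the probability of reaching $\Bar{\sB}_\circ$ before leaving a larger ball, and geometric chaining, one obtains for every ball $\sB$ a $\delta>0$ with $\sup_{v}\Exp_x^v\bigl[\E^{\delta\uptau(\sB^c)}\bigr]<\infty$ for all $x\in\sB^c$; in particular $\Prob_x^v(\uptau(\sB^c)<\infty)=1$ for every $x$ and $v$, which is recurrence. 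To get $\Lambda(\Ind_{\sB^c})<1$, fix $\sB$, set $\varphi(x)\df\sup_{v\in\Usm}\Exp_x^v\bigl[\E^{\delta\uptau(\sB^c)}\bigr]$ on $\sB^c$, extended by a positive constant on $\sB$, and use the dynamic-programming characterization of $\varphi$ together with interior elliptic regularity to verify $\varphi\in\Sobl^{2,m}(\Rm)$, $\varphi>0$, and $\widehat\cA\varphi+(\Ind_{\sB^c}-(1-\delta))\varphi\le0$ a.e., whence $\Lambda(\Ind_{\sB^c})\le1-\delta<1$.

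Finally, for $(d)\Rightarrow(c)$, fix $\sB$, pick $\lambda<1$ and $\varphi\in\Sobl^{2,m}(\Rm)$ with $\varphi>0$ and $\widehat\cA\varphi+(\Ind_{\sB^c}-\lambda)\varphi\le0$, and set $\delta\df1-\lambda>0$. On $\sB^c$ this reads $\widehat\cA\varphi\le-\delta\varphi$, while on $\sB$ one gets $\widehat\cA\varphi+\delta\varphi\le\varphi\le\sup_\sB\varphi\df\kappa_0$, so $\widehat\cA\varphi\le\kappa_0\Ind_\sB-\delta\varphi$ throughout $\Rm$. The remaining point — that $\varphi$ may be taken coercive, after which dividing by $\inf_\Rm\varphi>0$ makes it $\ge1$ — follows from the \emph{strict} inequality $\Lambda(\Ind_{\sB^c})<1$ together with the recurrence in~(d): recurrence forces $1$ to exceed the ``principal eigenvalue at infinity'' of $\widehat\cA+\Ind_{\sB^c}$, which by a maximum-principle/Harnack extraction argument rules out any bounded positive supersolution and hence makes the eigenfunction blow up at infinity (this is the controlled analogue of the corresponding step in \cite[Theorem~3.1]{APS19}; see also \cite[Chapter~3]{ABG12}).

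I expect two places to carry the real content. The first is the uniformity in $v$ of the Harnack/Krylov--Safonov constants used in the propagation step $(a)\Rightarrow(b)$ and in the regularity of $\varphi$; this is where one must invoke compactness of $\Act$ and continuity of $b$ to replace ``the diffusion'' of \cite{APS19} by ``the family of diffusions''. The second, and the genuinely delicate one, is the coercivity of the principal eigenfunction in $(d)\Rightarrow(c)$: everything else in the cycle is a routine transcription of the uncontrolled argument with $\widehat\cA$ in place of the generator, but the coercivity requires the strict eigenvalue gap and the recurrence to be combined carefully.
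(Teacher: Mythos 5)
The paper states \cref{T4.1} without proof, remarking only that it is a straightforward extension of \cite[Theorem~3.1]{APS19} to controlled diffusions; your proposal reconstructs precisely that extension — the standard cycle of implications via Foster--Lyapunov supermartingales, uniform Harnack/Krylov--Safonov propagation of the exponential hitting-time bound, and the generalized principal-eigenvalue characterization, with $\widehat\cA$ pointwise dominating every $\Lg_v$ to secure uniformity over $\Usm$. This is exactly the intended argument, and the two points you single out as carrying the real content (uniformity of the elliptic constants in $v$, and coercivity of the eigenfunction in $(d)\Rightarrow(c)$) are indeed where the substance of the adaptation lies.
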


\begin{remark} \label{R4.1}
Recall \cref{Efnorm}.
Let $P^v_t(x,\D{y})$ denote the transition probability of $\process{X}$
in \cref{E-sde0}
under a control $v\in\Usm$.
It is well known that \cref{ET4.1A} implies that there exist
constants $\gamma$ and $C_\gamma$ which do not depend on the control
$v$ chosen, such that
\begin{equation}\label{ER4.1A}
\bnorm{ P^{v}_t(x,\cdot\,)-\uppi_v(\cdot)\,}_{\Lyap}\,\le\,
C_\gamma \Lyap(x)\, \E^{-\gamma t}\,,\qquad \forall\,x\in\Rm\,,\ \forall\,t\ge0\,,
\end{equation}
where $\uppi_v$ denotes the invariant probability measure of
$\process{X}$ under the control $v$. In addition, for any control $v \in \Usm$ we have
\begin{equation}
\int_\Rm \Lyap(x)\,\uppi_v(\D{x}) \,\le\, \frac{\kappa_0}{\delta}\,.
\end{equation}
In particular for $\Lyap(x)$ being an exponential function, the moment generating function of $\process{X}$ is finite.
\end{remark}

\subsection{A class of intrinsic Lyapunov functions for the queueing network model}

As seen in \cref{T4.1}, uniform exponential ergodicity is equivalent
to the Foster--Lyapunov inequality in \cref{ET4.1A}.
In establishing this property for the diffusion limit of stochastic networks,
a proper choice of a Lyapunov function is of tantamount importance.
We first describe an intrinsic class of such functions.

We fix a convex function $\psi\in\Cc^2(\RR)$ with the property that
$\psi(t)$ is constant for $t\le-1$, and $\psi(t)=t$ for $t\ge0$.
This is defined by 
\begin{equation*}
\psi(t) \df\,
\begin{cases}
-\frac{1}{2}, & t\le-1\,,\\[5pt]
(t+1)^3 -\frac{1}{2}(t+1)^4-\frac{1}{2} & t \in [-1,0]\,,\\[5pt]
t & t\ge 0\,.
\end{cases}
\end{equation*}
For $\veo>0$ we define
\begin{equation*}
\psi_\veo(t) \,\df\, \psi (\veo t)\,,
\end{equation*}
Thus $\psi_\veo(t)=\veo t$ for $t>0$.
A simple calculation also shows that $\psi''_\veo(t)\le \frac{3}{2}\veo^2$.

Suppose that $B_1 = \diag(\Tilde\mu_1,\dotsc,\Tilde\mu_m)$.
Using the function $\psi_\veo$ introduced above, we let
\begin{equation}\label{ED4.1A}
\Psi(x)\,\df\, \sum_{i\in\cI}\frac{\psi_\veo(x_i)}{\Tilde\mu_{i}}\,,
\end{equation}
with
\begin{equation}\label{ED4.1B}
\veo\,\df\, \frac{\varrho}{3m}
\Biggl(\sum_{i\in\cI}\frac{\lambda_i(3\Tilde\mu_i+2)}{\Tilde\mu_{i}^2}
\Biggr)^{-1}\,.
\end{equation}

We also define
\begin{equation}\label{ED4.1C}
V_1(x)\,\df\,\exp\bigl(\theta\Psi(-x)\bigr)\,,\quad
V_2(x)\,\df\,\exp\bigl(\Psi(x)\bigr)\,,\quad\text{and\ \ }
V(x)\,\df\, V_1(x) + V_2(x)\,,
\end{equation}
with $\theta$ a positive constant.

As a result of fixing the value of $\veo$ in \cref{ED4.1B},
$\Psi$ depends only on the parameter $\theta$.
This simplifies the statements of the results in the rest of the paper.

We review some useful properties of the function $\psi_\veo$. Note that for $\veo >0 $ we have
\begin{equation*}
\psi'_\veo(t) t \,=\,
\begin{cases}
0 & \veo t\le-1\,,\\[5pt]
\veo t (\veo t +1)^2 (-2\veo t+1)  & \veo t \in [-1,0]\,,\\[5pt]
\veo t & \veo t\ge 0\,.
\end{cases}
\end{equation*}
The minimum of $\psi'_\veo(t) t$ when $\veo t \in [-1,0]$ is $-\frac{3(39 + 55\sqrt{33})}{4096} \ge -\frac{1}{2}$ for $\veo t = - \frac{1 + \sqrt{33}}{16}$. Therefore
\begin{equation}\label{ED4.1D}
\begin{aligned}
\sum_{i\in\cI} \psi_\veo'(x_i) x_i &= 
\sum_{i \in \cI} \psi_\veo'(x_i) x_i \Ind_{\{\veo x_i \ge 0\}} + \sum_{i \in \cI} \psi_\veo'(x_i) x_i \Ind_{\{\veo x_i \in [-1,0]\}}\\
& \ge \veo \sum_{i \in \cI} x_i\Ind_{\{x_i \ge 0\}} -\frac{1}{2} \sum_{i \in \cI} \Ind_{\{\veo x_i \in [-1,0]\}} \\
& \ge \veo \norm{x^+}_1 -\frac{m}{2}\,,
\end{aligned}
\end{equation}
and similarly $-\sum_{i\in\cI} \psi'_\veo(-x_i) x_i \,\ge\, \veo \norm{x^-}^{}_1 - \frac{m}{2}$,
where $\cI\df\{1,2,\dotsc,m\}$. Note also that
\begin{equation*}
-\sum_{i\in\cI} \psi'_\veo(-x_i) x_i \,\le\, \veo \langle e,x\rangle
\,\le\, \sum_{i\in\cI} \psi_\veo'(x_i) x_i\,.
\end{equation*}

The function $V$ in \cref{ED4.1C}, scaled by the parameter $\theta$
which are suppressed in the notation,  is our choice of a Lyapunov
function when $B_1$ is a diagonal matrix. It is constructed in an intricate manner in order to capture
both the total workload (using $\Psi(x)$) on the positive half-space and the idleness (using $\Psi(-x)$) on the negative half-space. As one cannot simply use $x^+$ or $x^-$, we must construct mollified smooth functions for them. In addition, one must recognize that the effects of the workload and idleness on the system are not the same (not symmetric), so we also introduce a parameter $\theta$ in the definition of $V_1(\cdot)$.
The reader will notice the similarities in \cref{ED4.1C}
and \cite[Definition~2.2]{AHP18}.
However the function $V$ used in this paper is the sum of
the two exponential functions $V_1$ and $V_2$, whereas their
product is used in \cite[Lemma~2.1]{AHP18}.
As will be seen later, in the case of multiclass multi-pool networks, the
analysis is considerably more complex. In \cref{S4.2} for example, the proofs required $V_1 \ge V_2^2$ on a subset of the state space while requiring the opposite inequality on its complement in order to establish the Foster-Lyapunov inequality in \cref{ELstableB}. See the discussions following \cref{PLstableA,PL4.1B}. This is mainly the reason behind using the sum instead of the product when defining the function $V$.

In the following subsections, we establish the uniform exponential ergodicity
of the networks under consideration.
To help with the exposition, we study the `N'~network in detail and then proceed
to the more general networks with a dominant server pool,
and networks with class-dependent service rates.

In establishing the desired drift inequalities, we often partition the space
appropriately, and focus on the subsets of the partition.
The following cones appear quite often in the analysis.

For $\delta\in[0,1]$, we define the cones 
\begin{equation}\label{E-cone}
\begin{aligned}
\cK_{\delta}^+ &\,\df\, \bigl\{ x\in\RR^m\,\colon \langle  e,x\rangle
\ge \delta \norm{x}^{}_1\bigr\}\,,\\
\cK_{\delta}^- &\,\df\, \bigl\{ x\in\RR^m\,\colon \langle  e,x\rangle
\le - \delta \norm{x}^{}_1\bigr\}.
\end{aligned}
\end{equation}

It is clear that $\cK_0^+$ ($\cK_0^-$) corresponds to the nonnegative (nonpositive)
canonical half-space,
and $\cK_1^+$ ($\cK_1^-$) is the nonnegative (nonpositive) closed orthant.

The following identities are very useful.
\begin{equation}\label{ED4.2A}
\langle  e,x^+\rangle \,=\, \frac{1\pm\delta}{2}\,\norm{x}^{}_1\,,\qquad
\langle  e,x^-\rangle \,=\, \frac{1\mp\delta}{2}\,\norm{x}^{}_1
\qquad\text{for\ \ } x\in\partial\cK_\delta^\pm\,,\ \delta\in[0,1]\,.
\end{equation}
In addition, it is straightforward to show that
\begin{equation}\label{ED4.2B}
\sum_{i\in\cI} \psi_\veo(x_i) \,\le\, \sum_{i\in\cI} \psi_\veo(-x_i)
\quad\text{if\ \ } x\in\cK_0^-\,.
\end{equation}
Also, the following inequality is true in general for any $\cI' \subset \cI$.
\begin{equation}\label{ED4.2D}
\sum_{i\in\cI'}\psi'_\veo(x_i)x_i- \veo\sum_{i\in\cI'} x_i
\,=\, \sum_{x_i<0\,,\,i\in\cI'} \bigl(\psi'_\veo(x_i)-\veo) x_i\,\ge\,0\,.
\end{equation}

\begin{remark}\label{R4.2}
There is an important scaling of the drift which we employ.
Note that if we let $\zeta =\frac{\varrho}{m}  e+ B_1^{-1}\ell$, with $\varrho$
as in \cref{Evarrho}, then a mere translation of the origin of the form
$\Tilde X_t = X_t + \zeta$ results in a diffusion of with the same drift
as \cref{Edrift}, except that
the vector $\ell$ gets replaced by $\ell=- \frac{\varrho}{m} B_1 e$.
Therefore,  we may assume without any loss of
generality that
the drift in \cref{Eb} takes the form
\begin{equation}\label{ER4.2A}
b(x,u) \,=\, -\frac{\varrho}{m}B_1e
-B_1\bigl(x-\langle  e,x\rangle^+u^c\bigr) + B_2 u^s \langle e,x \rangle ^-.
\end{equation}
\end{remark}

\subsection{The Foster--Lyapunov inequality}

Recall the definition of the operator
$\Lg_u$ in \cref{ELg}.
We start with the following simple assertion.

\begin{lemma}\label{Lstable}
Let $V$ be the function in \cref{ED4.1C} with $\veo$ as in \cref{ED4.1B},
and $\theta \ge \theta_0$ where $\theta_0$ is a constant.
Suppose that for any $\delta\in(0,1)$ there exist positive constants
$c_0$ and $c_1$ such that the drift $b$ in \cref{ER4.2A} satisfies
\begin{subequations}
\begin{align}
\langle b(x,u),\nabla V(x) \rangle &\,\le\, 
c_0 - \veo c_1 \norm{x}^{}_1 V(x)
 \qquad\forall\,(x,u)\in\bigl(\cK_\delta^+\bigr)^{\mathsf c}\times\varDelta\,,\label{ELstableA1}
\\[5pt]
\langle b(x,u),\nabla V_2(x) \rangle &\,\le\,
- \frac{\varrho\veo}{m} V_2(x)
\qquad\mspace{58mu} \forall\,(x,u)\in\cK_\delta^+\times\varDelta\,.\label{ELstableA2}
\end{align}
\end{subequations}
Then, there exists a constant $C_0$ such that
\begin{equation}\label{ELstableB}
\Lg_u V\bigl(x\bigr)
\,\le\, C_0 - \frac{\varrho\veo}{3m} V(x)\qquad
\qquad\forall (x,u)\in\Rm\times\varDelta\,.
\end{equation}
\end{lemma}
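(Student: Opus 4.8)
The plan is to fix once and for all some $\delta\in(0,1)$ (the value is immaterial; one may take $\delta=\tfrac12$), let $c_0,c_1$ be the constants that the hypothesis furnishes for this $\delta$, and assume without loss of generality that $\theta_0\ge1$. I would first dispose of the second–order part of $\Lg_u$, uniformly in $u$. Since $0\le\psi_\veo'\le\veo$ (as $\psi$ is non-decreasing with slope at most $1$) and $\psi_\veo''\le\tfrac32\veo^2$, differentiating $V_1=\exp(\theta\Psi(-x))$ and $V_2=\exp(\Psi(x))$ and using the choice of $\veo$ in \cref{ED4.1B}, which amounts to the identity $\sum_{i\in\cI}\lambda_i(3\Tilde\mu_i+2)\Tilde\mu_i^{-2}=\tfrac{\varrho}{3m\veo}$, one gets $\tfrac12\trace\bigl(a\nabla^2V_1\bigr)\le\theta^2\tfrac{\varrho\veo}{6m}V_1$ and $\tfrac12\trace\bigl(a\nabla^2V_2\bigr)\le\tfrac{\varrho\veo}{6m}V_2$, hence (for $\theta\ge1$) $\tfrac12\trace\bigl(a\nabla^2V\bigr)\le\theta^2\tfrac{\varrho\veo}{6m}V$ on all of $\Rm$. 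Then I would split $\Rm=\cK_\delta^+\cup(\cK_\delta^+)^{\mathsf c}$ (with $\cK_\delta^+$ as in \cref{E-cone}) and bound $\Lg_uV$ on each piece.

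On $\cK_\delta^+$ one has $\langle e,x\rangle\ge0$, so $\langle e,x\rangle^-=0$ and \cref{ER4.2A} reduces, using $B_1=\diag(\Tilde\mu_1,\dotsc,\Tilde\mu_m)$, to $b_i(x,u)=\Tilde\mu_i\bigl(-\tfrac{\varrho}{m}-x_i+\langle e,x\rangle u_i^c\bigr)$. For $V_2$ I would simply combine the second–order estimate with \cref{ELstableA2} to get $\Lg_uV_2\le\bigl(\tfrac{\varrho\veo}{6m}-\tfrac{\varrho\veo}{m}\bigr)V_2\le-\tfrac{\varrho\veo}{3m}V_2$. For $V_1$ there is no hypothesis available on $\cK_\delta^+$, and this is the crux. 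A direct computation with the above drift gives $\langle b,\nabla V_1\rangle=\theta V_1\sum_{i\in\cI}\psi_\veo'(-x_i)\bigl(\tfrac{\varrho}{m}+x_i-\langle e,x\rangle u_i^c\bigr)$; on $\cK_\delta^+$ the term $-\langle e,x\rangle\sum_i\psi_\veo'(-x_i)u_i^c$ is nonpositive and may be dropped, $\sum_i\psi_\veo'(-x_i)\le m\veo$, and the companion of \cref{ED4.1D} gives $\sum_i\psi_\veo'(-x_i)x_i\le\tfrac m2-\veo\norm{x^-}_1$, so that, adding the second–order bound, $\Lg_uV_1\le\theta V_1\bigl(a_1-\veo\norm{x^-}_1\bigr)$ with $a_1\df\varrho\veo+\tfrac m2+\tfrac{\theta\varrho\veo}{6m}$. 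The key observation is that $V_1$ is large exactly when $\norm{x^-}_1$ is: since $\Psi(-x)\le\tfrac{\veo}{\Tilde\mu\mn}\norm{x^-}_1$, on the set $\{\norm{x^-}_1\le R_1\}$ (with $R_1$ a $\theta$–dependent threshold chosen below) $V_1$ is bounded by a constant $M_1=M_1(\theta)$, so $\Lg_uV_1\le\theta M_1a_1$ there; while for $\norm{x^-}_1\ge R_1\df\tfrac{a_1}{\veo}+\tfrac{\varrho}{3m\theta}$ one has $a_1-\veo\norm{x^-}_1\le-\tfrac{\varrho\veo}{3m\theta}$ and hence $\Lg_uV_1\le-\tfrac{\varrho\veo}{3m}V_1$ outright. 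Patching the two regimes (absorbing the bounded contribution into a constant, as $V_1\le M_1$ there) yields $\Lg_uV_1\le C_1-\tfrac{\varrho\veo}{3m}V_1$ on $\cK_\delta^+$, and adding the $V_2$ bound, $\Lg_uV\le C_1-\tfrac{\varrho\veo}{3m}V$ on $\cK_\delta^+$.

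On $(\cK_\delta^+)^{\mathsf c}$ the argument is routine: \cref{ELstableA1} together with the second–order estimate gives $\Lg_uV\le c_0+\veo\bigl(\tfrac{\theta^2\varrho}{6m}-c_1\norm{x}_1\bigr)V$, which is $\le c_0-\tfrac{\varrho\veo}{3m}V$ as soon as $\norm{x}_1\ge\tfrac1{c_1}\bigl(\tfrac{\theta^2\varrho}{6m}+\tfrac{\varrho}{3m}\bigr)$, while on the complementary (bounded) set $V$ is bounded and the positive term is absorbed into a constant, giving $\Lg_uV\le C_2-\tfrac{\varrho\veo}{3m}V$ there; since none of $c_0,c_1,C_1,C_2$ depend on $u$, taking $C_0\df\max\{C_1,C_2\}$ establishes \cref{ELstableB} for all $(x,u)\in\Rm\times\varDelta$. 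The only genuinely delicate step is the treatment of $V_1$ on $\cK_\delta^+$: one cannot appeal to a drift hypothesis there, and must instead read off the exact drift on that cone and exploit the equivalence ``$V_1$ large $\iff\norm{x^-}_1$ large''. Everything else is bookkeeping, the slack in the constants being comfortable: on $\cK_\delta^+$ the factor $\tfrac16$ coming from \cref{ED4.1B} against the full $\tfrac{\varrho\veo}{m}$ of \cref{ELstableA2} leaves room to reach the rate $\tfrac{\varrho\veo}{3m}$ for $V_2$, and on $(\cK_\delta^+)^{\mathsf c}$ the unbounded decay $-\veo c_1\norm{x}_1V$ overwhelms the $\theta^2$–dependent second–order term for $\norm{x}_1$ large.
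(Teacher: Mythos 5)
Your proof is correct, and on the critical part of the argument it takes a genuinely different route from the paper's. Both proofs handle $(\cK_\delta^+)^{\mathsf c}$ and the $V_2$ term on $\cK_\delta^+$ the same way (hypotheses \cref{ELstableA1,ELstableA2} plus the second-order bound forced by the choice of $\veo$ in \cref{ED4.1B}). The divergence is in controlling $\Lg_u V_1$ on $\cK_\delta^+$, where no drift hypothesis is available. The paper uses only the crude affine-growth bound $\Lg_u V_1\le C(1+\norm{x}^{}_1)V_1$ and then \emph{chooses $\delta$ close to $1$} so that $V_1\le\sqrt{V_2}$ on $\cK_\delta^+$ outside a compact set; the exponentially decaying term $-\tfrac{3\varrho\veo}{4m}V_2$ then dominates $C(1+\norm{x}^{}_1)\sqrt{V_2}$. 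You instead read off the exact drift \cref{ER4.2A} on the positive cone (where $\langle e,x\rangle^-=0$ and $B_1$ is diagonal), obtain $\Lg_u V_1\le\theta V_1\bigl(a_1-\veo\norm{x^-}^{}_1\bigr)$ from the companion of \cref{ED4.1D}, and exploit that $V_1$ is bounded precisely where $\norm{x^-}^{}_1$ is bounded. Each approach buys something: the paper's comparison $V_2\ge V_1^2$ is more robust to the precise form of the drift (only affine growth of $b$ is used on $\cK_\delta^+$), which is why the same device recurs in \cref{L5.1} for the prelimit; your computation makes no use of the freedom to push $\delta$ toward $1$, so a single fixed $\delta$ in the hypotheses would suffice for your argument, and it isolates exactly why the sum $V_1+V_2$ works --- the dangerous exponential $V_1$ carries its own negative drift wherever it is large. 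Your constants and the verification that the second-order term is $\le\theta^2\tfrac{\varrho\veo}{6m}V$ are accurate (the paper uses the slightly weaker bound $\tfrac{\varrho\veo}{4m}V_2$), so the patching into \cref{ELstableB} goes through as you describe.
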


\begin{proof}
A straightforward calculation, using the fact that
$\psi''_\veo(t)\le \frac{3}{2}\veo^2$, shows that
\begin{equation*}
\frac{1}{2}\,\trace\bigl(a\nabla^2V_2(x)\bigr)
\,\le\, \veo^2\sum_{i\in\cI} \frac{\lambda_i(3\mu_i+2)}{2\mu_i^2}
\, V_2(x)
\qquad\forall\,x\in\Rm\,.
\end{equation*}
Therefore, the choice of $\veo$ in \cref{ED4.1B} implies that
$\frac{1}{2}\trace\bigl(a\nabla^2 V_2(x)\bigr)\le\frac{\varrho\veo}{4m}V_2$
for all $x\in \mathbb{R}^2$,
and thus
\begin{equation}\label{PLstableA}
\Lg_u V_2(x) \,\le\,
- \frac{3\varrho\veo}{4m} V_2(x)
\qquad \forall\,(x,u)\in\cK_\delta^+\times\varDelta
\end{equation}
by \cref{ELstableA2}.
Since $\abs{x^+} \ge \frac{1+\delta}{1-\delta}\abs{x^-}$ for all
$x\in \cK_\delta^+$, we may select $\delta$ sufficiently close to $1$ such
that $V_2\ge V_1^2$ on $\cK_\delta^+\cap\cK_r^{\mathsf c}$ for some $r>0$.
Since $V_2$ has exponential growth in $\norm{x}^{}_{1}$ on
$\cK_\delta^+$ and
$\Lg_u V_1(x) \,\le\, C(1+\abs{x}^{}_1) V_1(x)$ on
$\cK_\delta^+\times\varDelta$, it then follows that \cref{ELstableB} holds
on $\cK_\delta^+\times\varDelta$ by \cref{PLstableA}.
It is also clear that \cref{ELstableB} also holds on
$\bigl(\cK_\delta^+\bigr)^{\mathsf c}\times\varDelta$ by \cref{ELstableA1}.
This completes the proof.
\end{proof}
We now have the following result.

\begin{theorem}\label{Tstable}
Assume that $\varrho>0$. Let $V$ be the function in \cref{ED4.1C} with $\veo$ as in \cref{ED4.1B},
and $\theta \ge \theta_0$ where $\theta_0$ is a constant.
Then the diffusion limit of any network with a dominant server pool
or with  class-dependent service rates is uniformly exponentially ergodic
and the invariant distributions have exponential tails. 
In particular, there exists $C_0$ such that 
\begin{equation}\label{ETstableA}
\Lg_u V\bigl(x\bigr)
\,\le\, C_0 - \frac{\varrho\veo}{3m} V(x)\qquad
\qquad\forall (x,u)\in\Rm\times\varDelta\,.
\end{equation}
In particular, 
\begin{equation}\label{ETstableB}
\bnorm{ P^{v}_t(x,\cdot\,)-\uppi_v(\cdot)\,}_{V}\,\le\,
C_\gamma V(x)\, \E^{-\gamma t}\,,\qquad \forall\,x\in\Rm\,,\ \forall\,t\ge0\,,
\end{equation}
where $P^v_t(x,\D{y})$ denote the transition probability of $\process{X}$
under $v = (u^c, u^s)$ and $\uppi_v$ denotes the invariant probability measure of
$\process{X}$ under the control $v$.
\end{theorem}

\begin{proof}
In \cref{L4.2,L4.3} in the section which follows,  we establish
\cref{ELstableA1,ELstableA2} for these networks.
Thus the proof of \cref{ETstableA} follows directly from \cref{Lstable}. 
\end{proof}


\subsection{Three technical lemmas}
In this section, we prove \cref{ELstableA1,ELstableA2} for the networks under consideration which implies \cref{Tstable}. Even though the `N'~network is a special case
of the networks with a dominant server pool,
we first establish the result for this network in \cref{L4.1} as understanding the results in $\RR^2$ will definitely help the reader in understanding the equations in \cref{L4.2,L4.3}.
\subsubsection{The case of the `N'~network}\label{S4.2}
Here, $m=2$, and the matrices $B_i$, $i=1,2$, in \cref{Eb} are given by
\begin{equation*}
B_1 \,=\, \begin{pmatrix} \mu_{11} & 0\\[5pt] 0 & \mu_{21}\end{pmatrix}\,,
\qquad\text{and\ \ }
B_2  \,=\, \begin{pmatrix}0& \mu_{12} - \mu_{11}\\[5pt] 0 & 0\end{pmatrix}.
\end{equation*}
Thus, using \cref{ER4.2A}, the drift
$b\colon\mathbb{R}^2\to\mathbb{R}^2$ for the `N'~network is given by
\begin{equation}\label{EdriftN}
b(x, u) \,=\,
-\frac{\varrho}{2}\begin{pmatrix} \mu_{11}\\[5pt] \mu_{21} \end{pmatrix} 
- \begin{pmatrix} \mu_{11} & 0\\[5pt] 0 & \mu_{21}\end{pmatrix}
\bigl(x-\langle  e,x\rangle^+u^c\bigr)
+ \begin{pmatrix} (\mu_{12} - \mu_{11}) u_2^s\\[5pt] 0\end{pmatrix}
\langle  e,x\rangle^-\,.
\end{equation}

Note that for the `N'~network, we have
$\Psi(x) = \frac{\psi_\veo(x_1)}{\mu_{11}} + \frac{\psi_\veo(x_2)}{\mu_{21}}$
by  \cref{ED4.1A}.
Recall the definition of the cube $K_r$ in \cref{Ecube}.
We have the following lemma that verifies the drift inequalities 
\cref{ELstableA1,ELstableA2}  for the `N' network.
\begin{lemma}\label{L4.1}
Consider an `N'~network satisfying $\varrho > 0$.
Let $\delta\in(0,1)$, $\theta\ge \theta_0\df 2  (\eta\vee\eta^{-1})$,
with $\eta \df\frac{\mu_{12}}{\mu_{11}}$,
and $V(x)$ be as in \cref{ED4.1C}.
Then, \cref{ELstableA1,ELstableA2} hold with $m=2$.
\end{lemma}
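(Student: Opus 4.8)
The plan is to apply the drift $b$ of \cref{EdriftN} to the gradients of $V_1$ and $V_2$ from \cref{ED4.1C} separately, using the elementary properties of $\psi_\veo$ (in particular $0\le\psi'_\veo\le\veo$, $\psi'_\veo(t)\,t\ge\veo t$, and the lower bounds recorded in \cref{ED4.1D}) together with the identity $\langle b,\nabla V_k\rangle = \pm\,\theta_k V_k\sum_i\tfrac{b_i(x,u)}{\Tilde\mu_i}\,\psi'_\veo(\pm x_i)$ (with $\theta_1=\theta$ for $V_1$ and $\theta_2=1$ for $V_2$), and then partitioning $\RR^2$ according to the signs of $x_1,x_2$ and of $\langle e,x\rangle$. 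For \cref{ELstableA2}: on $\cK_\delta^+\subseteq\{x\colon\langle e,x\rangle\ge0\}$ the $B_2$-term of \cref{EdriftN} vanishes, so $\tfrac{b_i(x,u)}{\Tilde\mu_i}=-\tfrac{\varrho}{2}-x_i+u_i^c\langle e,x\rangle$; then $\sum_i\tfrac{b_i}{\Tilde\mu_i}\psi'_\veo(x_i)\le-\tfrac{\varrho}{2}\bigl(\psi'_\veo(x_1)+\psi'_\veo(x_2)\bigr)$, because $\sum_i\psi'_\veo(x_i)x_i\ge\veo\langle e,x\rangle\ge\langle e,x\rangle\sum_iu_i^c\psi'_\veo(x_i)$, and $\langle e,x\rangle\ge0$ forces $x_1\ge0$ or $x_2\ge0$, whence $\psi'_\veo(x_1)+\psi'_\veo(x_2)\ge\veo$. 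This gives \cref{ELstableA2} with $m=2$, uniformly in $u$.

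For \cref{ELstableA1} I would first show that $V_1$ produces a coercive drift toward the origin on $\bigl(\cK_\delta^+\bigr)\cmp$: from $\sum_i\psi'_\veo(-x_i)(-x_i)\ge\veo\norm{x^-}_1-1$, the fact (a strengthening of \cref{E-tran}) that $1+(\eta-1)u_2^s\ge\min\{1,\eta\}>0$ for the `N' network, and $\norm{x^-}_1\ge\tfrac{1-\delta}{2}\norm{x}_1$ on $\bigl(\cK_\delta^+\bigr)\cmp$, one gets $\sum_i\tfrac{b_i}{\Tilde\mu_i}\psi'_\veo(-x_i)\ge\veo\,c(\delta)\norm{x}_1-C$, hence $\langle b,\nabla V_1\rangle\le-\theta\veo\,c(\delta)\norm{x}_1V_1+\theta C V_1$ with $c(\delta)>0$. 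A crude estimate gives, on all of $\RR^2$, $\langle b,\nabla V_2\rangle=V_2\sum_i\tfrac{b_i}{\Tilde\mu_i}\psi'_\veo(x_i)\le C(1+\veo\norm{x}_1)V_2$. These are combined using the domination $V_1\ge V_2^2$, valid on $\bigl(\cK_\delta^+\bigr)\cmp\cap K_r\cmp$ for $r$ large: then $V_2(1+\veo\norm{x}_1)\le\sqrt{V_1}\,(1+\veo\norm{x}_1)$, which is of smaller order than $\norm{x}_1V_1$, so the negative $V_1$-drift absorbs the $V_2$-term; since $V\le2V_1$ on that set, $\langle b,\nabla V\rangle\le-\veo c_1\norm{x}_1V$ there, and the remaining bounded set $\bigl(\cK_\delta^+\bigr)\cmp\cap K_r$ is swallowed by enlarging $c_0$.

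The step I expect to be the main obstacle is the domination $V_1\ge V_2^2$ on $\bigl(\cK_\delta^+\bigr)\cmp\cap K_r\cmp$, i.e.\ $\theta\Psi(-x)\ge2\Psi(x)$ there, which is exactly where the hypothesis $\theta\ge\theta_0=2(\eta\vee\eta^{-1})$ enters. The difficulty it resolves is that on the cone $\{x_1>0,\ x_2<0,\ \langle e,x\rangle<0\}$ the $B_2$-term $(\mu_{12}-\mu_{11})u_2^s\langle e,x\rangle^-$ in \cref{EdriftN} can render the drift of $X_1$ positive when $\eta>1$, so that $V_2$ fails to be a supermartingale there and the analysis must lean on $V_1$ instead. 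This is precisely why the Lyapunov function is taken to be the \emph{sum} $V_1+V_2$ rather than the product: $V_1$ must dominate $V_2^2$ on the half-space where queueing of the flexible class can be self-sustaining, while $V_2$ dominates on $\cK_\delta^+$ (cf.\ the argument following \cref{PLstableA}). Once \cref{ELstableA1,ELstableA2} are established, \cref{Lstable} delivers the Foster--Lyapunov inequality \cref{ELstableB} for the `N' network.
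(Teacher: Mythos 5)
Your handling of \cref{ELstableA2} is correct and is essentially the paper's argument: on $\cK_\delta^+$ the $B_2$-term vanishes, $\sum_i\psi'_\veo(x_i)\bigl(x_i-u_i^c\langle e,x\rangle\bigr)\ge0$, and $\sum_i\psi'_\veo(x_i)\ge\veo$ because at least one coordinate is nonnegative. Your estimate for $\langle b,\nabla V_1\rangle$ on $\bigl(\cK_\delta^+\bigr)\cmp$ and the use of $V_1\ge V_2^2$ on the negative half-space also match the paper.

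The gap is the claim that $V_1\ge V_2^2$ holds on all of $\bigl(\cK_\delta^+\bigr)\cmp\cap K_r\cmp$ for large $r$. That domination, i.e.\ $\theta\Psi(-x)\ge2\Psi(x)$, rests on \cref{ED4.2B} and is therefore available only on $\cK_0^-$, where $\norm{x^+}^{}_1\le\norm{x^-}^{}_1$. But $\bigl(\cK_\delta^+\bigr)\cmp$ also contains the cone $\cK_0^+\setminus\cK_\delta^+=\{0\le\langle e,x\rangle<\delta\norm{x}^{}_1\}$, where by \cref{ED4.2A} the ratio $\norm{x^+}^{}_1/\norm{x^-}^{}_1$ can be as large as $\tfrac{1+\delta}{1-\delta}$: along a ray with $\langle e,x\rangle=\delta'\norm{x}^{}_1$ one has $\Psi(x)\sim\veo\tfrac{1+\delta'}{2}\norm{x}^{}_1\big/\Tilde\mu$ and $\Psi(-x)\sim\veo\tfrac{1-\delta'}{2}\norm{x}^{}_1\big/\Tilde\mu$, so no fixed $\theta$ yields $\theta\Psi(-x)\ge2\Psi(x)$ once $\tfrac{1+\delta'}{1-\delta'}$ exceeds a constant multiple of $\theta$. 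Since \cref{Lstable} is ultimately applied with $\delta$ close to $1$, this failure is unavoidable, and your crude bound $\langle b,\nabla V_2\rangle\le C(1+\veo\norm{x}^{}_1)V_2$ cannot be absorbed by the $V_1$-drift on $\cK_0^+\setminus\cK_\delta^+$, where $V_2$ may vastly exceed $V_1$. The paper closes this region without any domination: on all of $\cK_0^+$ the $B_2$-term vanishes, and the sharper estimate $F_2(x,u)\le-\tfrac{\varrho\veo}{2}+1-\veo\norm{x^-}^{}_1$ of \cref{PL4.1E} holds, because $\sum_i\psi'_\veo(x_i)x_i\ge\veo\norm{x^+}^{}_1-1$ by \cref{ED4.1D} while $\sum_i\psi'_\veo(x_i)u_i^c\langle e,x\rangle\le\veo\bigl(\norm{x^+}^{}_1-\norm{x^-}^{}_1\bigr)$; thus \emph{both} normalized drifts decay like $-\veo\norm{x^-}^{}_1\le-\tfrac{\veo(1-\delta)}{2}\norm{x}^{}_1$ on $\cK_0^+\setminus\cK_\delta^+$, and \cref{PL4.1F} follows by simply adding \cref{PL4.1D,PL4.1E}. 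You should therefore split $\bigl(\cK_\delta^+\bigr)\cmp$ into $\cK_0^-$, where your argument is fine, and $\cK_0^+\setminus\cK_\delta^+$, where you must retain the negative term in the $V_2$-drift instead of discarding it.
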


\begin{proof}
To simplify the notation we define
\begin{equation}\label{EF}
F_i(x,u) \,\df\, \frac{1}{V_i(x)}\,\bigl\langle b(x,u),\nabla V_i(x)\bigr\rangle\,,
\quad i=1,2\,.
\end{equation}
We use \cref{EdriftN}, and apply \cref{ED4.1D} and the inequalities
$\frac{\varrho}{2} \sum_{i\in\cI}\psi'_\veo \le\varrho \veo$, and
\begin{equation*}
\begin{aligned}
\psi'_\veo(-x_1)(\eta - 1)u_2^s \langle e,x \rangle &\,\le\,
-\veo (1-\eta)^+ \langle e,x \rangle\\
&\,\le\, \veo (1-\eta)^+ \norm{x^-}^{}_1
\qquad\forall\,(x,u)\in\cK_0^-\times\varDelta\,.
\end{aligned}
\end{equation*}
to obtain
\begin{equation}\label{PL4.1A}
\begin{aligned}
\frac{1}{\theta}\,F_1(x,u) &\,=\,
\frac{\varrho}{2} \sum_{i\in\cI}\psi'_\veo(-x_i)
+ \sum_{i\in\cI}\psi'_\veo(-x_i) x_i
- \psi'_\veo(-x_1)(\eta - 1)u_2^s \langle e,x\rangle^-\\
&\,\le\, 1+ \varrho \veo -\veo \norm{x^-}^{}_1 + \veo (1-\eta)^+ \norm{x^-}^{}_1\\
&\,\le\, (1+\varrho \veo) - \veo (\eta\wedge 1) \norm{x^-}^{}_1\\
&\,\le\, (1+\varrho \veo) - \frac{\veo}{2} (\eta\wedge 1) \norm{x}^{}_1
\qquad\forall\,(x,u)\in\cK_0^-\times\varDelta\,.
\end{aligned}
\end{equation}

Similarly, we have
\begin{equation}\label{PL4.1B}
\begin{aligned}
F_2(x,u) &\,=\, -\frac{\varrho}{2}\sum_{i\in\cI}\psi'_\veo(x_i)
- \sum_{i\in\cI}\psi'_\veo(x_i)x_i+\psi'_\veo(x_1)(\eta - 1)u_2^s
\langle e,x\rangle^-\\
&\,\le\, \veo\bigl(1 + (\eta - 1)^+ \bigr)\norm{x}^{}_1\\
&\,\le\, \veo(\eta \vee 1)\norm{x}^{}_1
\qquad\forall\,(x,u)\in\cK_0^-\times\varDelta\,.
\end{aligned}
\end{equation}
Note that, due to \cref{ED4.2B} and the choice of $\theta$,
we have $V_1\ge V_2^2$ on $\cK_0^-$.
Thus, since $V_1$ has exponential growth in $\norm{x}^{}_1$ on $\cK_0^-$,
combining \cref{PL4.1A,PL4.1B} and choosing an appropriate
cube $K_r$, we obtain
\begin{equation}\label{PL4.1C}
\langle b(x,u),\nabla V(x)\rangle \,\le\,
\Bigl(\theta(1+\varrho \veo) -\frac{\veo}{4} (\eta\wedge1)\norm{x}^{}_1\Bigr) V(x)
\qquad\forall\,(x,u)\in(\cK_0^-\setminus K_r)\times\varDelta\,.
\end{equation}

We continue with estimates on $\cK_0^+$.
A straightforward calculation shows that 
\begin{equation*}
\begin{aligned}
\frac{1}{\theta}\,F_1(x,u) &\,=\,
\frac{\varrho}{2}\sum_{i\in\cI}\psi'_\veo(-x_i)
+ \sum_{i\in\cI}\psi'_\veo(-x_i)x_i - \sum_{i\in\cI}
\psi'_\veo(-x_i)u_i^c\langle e,x\rangle\\
F_2(x,u) &\,=\, -\frac{\varrho}{2}\sum_{i\in\cI}\psi'_\veo(x_i)
- \sum_{i\in\cI}\psi'_\veo(x_i)x_i
+ \sum_{i\in\cI}\psi'_\veo(x_i)u_i^c\langle e,x\rangle
\end{aligned}
\qquad \forall\,(x,u)\in\cK_0^+\times\varDelta\,.
\end{equation*}
Again using \cref{ED4.1D}, we have 
\begin{equation}\label{PL4.1D}
\frac{1}{\theta}\,F_1(x,u) \,\le\,
1+\varrho \veo - \veo\norm{x^-}^{}_1 \qquad \forall\,(x,u)\in\cK_0^+\times\varDelta\,.
\end{equation}
We break the estimate of $F_2$ in two parts.
First, for any $\delta\in(0,1)$, using \cref{ED4.1D}, we obtain
\begin{equation}\label{PL4.1E}
\begin{aligned}
F_2(x,u) &\,\le\, -\frac{\varrho\veo}{2} + 1
-\veo\norm{x^+}^{}_1 + \veo \langle e,x \rangle\\
&\,\le\, -\frac{\varrho\veo}{2} + 1 -\veo\norm{x^-}^{}_1
\qquad\forall\,(x,u)\in\bigl(\cK_0^+\setminus\cK_\delta^+\bigr)\times\varDelta\,.
\end{aligned}
\end{equation}
Combining \cref{PL4.1D,PL4.1E}, we get
\begin{equation}\label{PL4.1F}
\langle b(x,u),\nabla V(x)\rangle \,\le\,
 \biggl(\theta(1+\varrho\veo) - \frac{\veo(1-\delta)}{2}\norm{x}^{}_1\biggr)V(x)
 \qquad\forall\,(x,u)\in\bigl(\cK_0^+\setminus\cK_\delta^+\bigr)\times\varDelta\,,
\end{equation}
and $\theta\ge1$,
where we use the fact that $\norm{x^-}^{}_1\ge \frac{1-\delta}{2}\norm{x}^{}_1$
on $\cK_0^+\setminus\cK_\delta^+$ by \cref{ED4.2A}.
Thus \cref{ELstableA1} follows by \cref{PL4.1C,PL4.1F}.

Next, using \cref{ED4.1D}, we have
\begin{equation*}
F_2(x,u) \,\le\,
-\frac{\varrho\veo}{2}  
\qquad\forall\,(x,u)\in\cK_\delta^+\times\varDelta\,,
\end{equation*}
and this completes the proof.
\end{proof}

\subsubsection{The case of networks with a dominant server pool}\label{S4.3}

Consider the class of networks described in
\cref{S2.4.1}.
We have the following lemma.

\begin{lemma}\label{L4.2}
Consider a network with a dominant server pool, such that $\varrho>0$.
Let $\delta\in(0,1)$,
$\theta \ge \theta_0\df 2\,\frac{\max_i{\mu_{i1}}}{\min_i{\mu_{i1}}}$,
and $V(x)$ be as in \cref{ED4.1C}.
Then, \cref{ELstableA1,ELstableA2} hold.
\end{lemma}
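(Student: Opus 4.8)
The plan is to generalize the two-dimensional argument of \cref{L4.1} to the drift \cref{EdriftM}. Recall that for a network with a dominant server pool, $B_1 = \diag(\mu_{11},\dotsc,\mu_{m1})$, so after the translation of \cref{R4.2} we may take $\ell = -\frac{\varrho}{m} B_1 e$ and the drift reads
\begin{equation*}
b_i(x,u) \,=\, -\frac{\varrho}{m}\mu_{i1} - \mu_{i1}\bigl(x_i - u_i^c\langle e,x\rangle^+\bigr) + \sum_{j\in\cJ_1(i)}\mu_{i1}(\eta_{ij}-1)u_j^s\langle e,x\rangle^-\,.
\end{equation*}
As in the proof of \cref{L4.1}, I would introduce $F_i(x,u) \df V_i(x)^{-1}\langle b(x,u),\nabla V_i(x)\rangle$ for $i=1,2$, so that $\langle b,\nabla V_1\rangle = \theta V_1 F_1$ with $\partial_{x_i}\Psi(-x) = -\psi'_\veo(-x_i)/\mu_{i1}$, and $\langle b,\nabla V_2\rangle = V_2 F_2$ with $\partial_{x_i}\Psi(x) = \psi'_\veo(x_i)/\mu_{i1}$. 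The key simplification is that, because $B_1$ is diagonal, the factors $\mu_{i1}$ cancel between the drift and the gradient of $\Psi$: the term $-\frac{\varrho}{m}\mu_{i1}\cdot(\pm\psi'_\veo/\mu_{i1})$ contributes $\mp\frac{\varrho}{m}\sum_i\psi'_\veo$, the term $-\mu_{i1} x_i\cdot(\pm\psi'_\veo/\mu_{i1})$ contributes $\mp\sum_i\psi'_\veo(\mp x_i)x_i$, and likewise for the $u^c$ and $u^s$ terms. So the computation is structurally identical to the `N' case, with $\sum_{j\in\cJ_1(i)}(\eta_{ij}-1)u_j^s$ playing the role of $(\eta-1)u_2^s$.

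I would split the state space into $\cK_0^-$ and $\cK_0^+$, and on $\cK_0^+$ further into $\cK_0^+\setminus\cK_\delta^+$ and $\cK_\delta^+$, exactly as in \cref{L4.1}. On $\cK_0^-$: using \cref{ED4.1D} and the bound $\frac{\varrho}{m}\sum_i\psi'_\veo \le \varrho\veo$, and the inequality $\psi'_\veo(-x_i)\sum_{j\in\cJ_1(i)}(\eta_{ij}-1)u_j^s\langle e,x\rangle \le \veo(1-\underline\eta)^+\norm{x^-}_1$ (using $\langle e,x\rangle\le0$ and that the sum of the $u_j^s$ over $j$ is at most $1$), one gets $\frac{1}{\theta}F_1 \le (1+\varrho\veo) - \veo(\underline\eta\wedge1)\norm{x^-}_1 \le (1+\varrho\veo) - \frac{\veo}{2}(\underline\eta\wedge1)\norm{x}_1$; and symmetrically $F_2 \le \veo(\bar\eta\vee1)\norm{x}_1$. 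Since $x\in\cK_0^-$ forces $\sum_i\psi_\veo(x_i)\le\sum_i\psi_\veo(-x_i)$ by \cref{ED4.2B}, the choice $\theta\ge\theta_0 \df 2\frac{\max_i\mu_{i1}}{\min_i\mu_{i1}}$ ensures $V_1\ge V_2^2$ on $\cK_0^-$ (here the ratio of $\mu_{i1}$'s enters because $\Psi$ weights coordinate $i$ by $1/\mu_{i1}$ and one needs $\theta\Psi(-x)\ge 2\Psi(x)$ coordinatewise-worst-case on $\cK_0^-$); combined with the exponential growth of $V_1$ this dominates the polynomial-times-$V_2$ term and yields \cref{ELstableA1} on $\cK_0^-$ outside a cube. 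On $\cK_0^+$: \cref{ED4.1D} gives $\frac{1}{\theta}F_1 \le 1+\varrho\veo-\veo\norm{x^-}_1$, and $F_2 \le -\frac{\varrho\veo}{m} + 1 - \veo\norm{x^+}_1 + \veo\langle e,x\rangle \le -\frac{\varrho\veo}{m}+1-\veo\norm{x^-}_1$; on $\cK_0^+\setminus\cK_\delta^+$ one has $\norm{x^-}_1\ge\frac{1-\delta}{2}\norm{x}_1$ by \cref{ED4.2A}, giving \cref{ELstableA1}; on $\cK_\delta^+$, since $\langle e,x\rangle^- = 0$ there, $F_2 \le -\frac{\varrho\veo}{m}$, which is exactly \cref{ELstableA2}.

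The main obstacle, as in \cref{L4.1}, is the interplay on $\cK_0^-$ between the two exponentials: one must choose $\theta$ large enough (in terms of the spread of the diagonal entries of $B_1$) so that $V_1$ genuinely dominates $V_2^2$ there, absorbing the term $\langle b,\nabla V_2\rangle$ which grows like $\norm{x}_1 V_2(x)$ and has no good sign on $\cK_0^-$; the diagonal structure of $B_1$ is what makes this possible with a fixed explicit $\theta_0$. A secondary point requiring care is bounding the idleness-routing term $\sum_{j\in\cJ_1(i)}\mu_{i1}(\eta_{ij}-1)u_j^s\langle e,x\rangle^-$ uniformly over $u^s\in\varDelta_s$; here one uses only that $u^s\ge0$ and $\langle e,u^s\rangle=1$, so the term is controlled by $\veo\max_i\max_{j\in\cJ_1(i)}|\eta_{ij}-1|\,\norm{x^-}_1$, and the sign arrangement $\underline\eta\wedge1$ versus $\bar\eta\vee1$ works out exactly as the $\eta\wedge1$, $\eta\vee1$ split did in the planar case. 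Everything else is the same bookkeeping as \cref{L4.1}, so I would present the $\cK_0^-$ estimates in full and then remark that the $\cK_0^+$ estimates are verbatim those of \cref{L4.1} with $2$ replaced by $m$ and $\eta$ replaced by the relevant $\eta_{ij}$'s.
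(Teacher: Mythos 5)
Your proposal follows essentially the same route as the paper's proof: the same decomposition into $\cK_0^-$, $\cK_0^+\setminus\cK_\delta^+$, and $\cK_\delta^+$, the same cancellation of the diagonal $\mu_{i1}$ factors, the bound on the idleness term via $(1-\underline\eta)^+$ and $\langle e,u^s\rangle\le1$, and the $V_1\ge V_2^2$ domination on $\cK_0^-$ enabled by the choice of $\theta_0$; the only discrepancies are immaterial constants (e.g., $1$ in place of $\nicefrac{m}{2}$ from \cref{ED4.1D}). This is correct and matches the paper's argument.
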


\begin{proof}
The method we follow is analogous to the proof of \cref{L4.1}.
Recall the definitions in \cref{EF}.
A straightforward calculation using \cref{EdriftM} shows that
\begin{equation*}
\begin{aligned}
\frac{1}{\theta}\,F_1(x,u)
&\,=\, \frac{\varrho}{m}\sum_{i\in\cI}\psi'_\veo(-x_i)
+ \sum_{i\in\cI}\psi'_\veo(-x_i)\bigl(x_i - u_i^c \langle e,x\rangle^+\bigr)
- \langle e,x \rangle^-
\sum_{i\in\cI}\sum_{j\in\cJ_1(i)}\psi'_\veo(-x_i)(\eta_{ij}-1)u_{j}^s\,,\\
F_2(x,u) &\,=\,-\frac{\varrho}{m}\sum_{i\in\cI}\psi'_\veo(x_i)
- \sum_{i\in\cI}\psi'_\veo(x_i)\bigl(x_i - u_i^c \langle e,x\rangle^+\bigr)
+ \langle e,x \rangle^-
\sum_{i\in\cI}\sum_{j\in\cJ_1(i)}\psi'_\veo(x_i)(\eta_{ij}-1)u_{j}^s\,.
\end{aligned}
\end{equation*}
Let $\underline\eta \df \min_{ij} \eta_{ij}$, and
$\Bar\eta\df \max_{ij} \eta_{ij}$.
Noting that 
\begin{equation*}
- \langle e,x \rangle^-
\sum_{i\in\cI}\, \sum_{j\in\cJ_1(i)}\psi'_\veo(-x_i)(\eta_{ij}-1)u_{j}^s
\,\le\, \veo (1-\underline\eta)^+\langle e,x \rangle^-\,,
\end{equation*}
it is easy to verify using \cref{E-cone,ED4.2A} that
\begin{equation}\label{PL4.2A}
\begin{aligned}
\frac{1}{\theta}\,F_1(x,u) &\,\le\,
\varrho \veo + \frac{m}{2} - \veo \norm{x^-}^{}_1 
+ \veo(1-\underline\eta)^+ \norm{x^-}^{}_1\\
&\,\le\,\, \varrho\veo + \frac{m}{2}
- \frac{\veo(1-\delta)}{2} (\underline\eta\wedge 1) \norm{x}^{}_1
\qquad \forall (x,u)\in\bigl(\cK_\delta^+\bigr)^{\mathsf c}\times\varDelta\,.
\end{aligned}
\end{equation}

Note that the drift equations on $\cK_0^+$ are similar to
those of the `N' model, with the only exception that
$\frac{\varrho}{2}$ is replaced by $\frac{\varrho}{m}$,
and the sum ranges from $i=1,\dotsc,m$ instead of $i=1,2$.
Hence, we obtain
\begin{equation}\label{PL4.2B}
\begin{aligned}
F_2(x,u) &\,\le\,
-\frac{\varrho\veo}{m} + \frac{m}{2} - \veo\norm{x^+}^{}_1 + \veo \langle e,x \rangle
\\
&\,\le\, -\frac{\varrho\veo}{m} + \frac{m}{2} -\veo\norm{x^-}^{}_1\\
&\,\le\, -\frac{\varrho\veo}{m} + \frac{m}{2} -\frac{\veo(1-\delta)}{2}\norm{x}^{}_1
\quad \forall\, (x,u)\in\bigl(\cK_0^+\setminus\cK_\delta^+\bigr)\times\varDelta\,,\\
\end{aligned}
\end{equation}
and
\begin{equation}\label{PL4.2C}
F_2(x,u) \,\le\,
-\frac{\varrho\veo}{m} - \veo\langle e,x \rangle + \veo \langle e,x \rangle
\,\le\, -\frac{\varrho\veo}{m} \qquad \forall (x,u)\in\cK_\delta^+\times\varDelta\,,
\end{equation}
for any $\delta\in(0,1)$.

The choice of $\theta$ implies that $V_1\ge V_2^2$ on $\cK_0^-$.
Thus \cref{ELstableA1} holds by \cref{PL4.2A,PL4.2B}, while
\cref{PL4.2C} is equivalent to \cref{ELstableA2}.
\end{proof}

\subsubsection{The case of networks with class-dependent service rates}

Consider the class of networks described in
\cref{S2.4.2}.
Such networks have a
limiting diffusion with the same drift structure
studied in \cite{AHP18}, and that paper shows that when $\varrho>0$,
then the diffusion (and the prelimit) is uniformly
exponentially ergodic in the presence or absence of abandonment.
However, the proof of uniform exponential ergodicity of the prelimit
for models with class-dependent service rates
does not seem to carry through with the Lyapunov function used in
\cite{AHP18}.
Thus for the sake of proving the result for the $n^{\mathrm{th}}$ system in \cref{S5},
we adopt here the Lyapunov function in \cref{ED4.1C}.

\begin{lemma}\label{L4.3} 
Consider a network satisfying $\mu_{ij} = \mu_{i}$ for all $i\in\cI$, and $\varrho>0$.
Let $\delta\in(0,1)$, and
$\theta\ge\theta_0\df 2\,\frac{\mu\mx}{\mu\mn}$.
Then, \cref{ELstableA1,ELstableA2} hold.
\end{lemma}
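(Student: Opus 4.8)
The plan is to mimic the argument of \cref{L4.2} (the dominant-server-pool case), simplified by the fact that $B_2=0$ for networks with class-dependent service rates, as recorded in \cref{S2.4.2}. Recall from \cref{Edrift-i} that the drift here is $b_i(x,u) = \ell_i - \mu_i\bigl(x_i - u_i^c\langle e,x\rangle^+\bigr)$, and after the shift of origin described in \cref{R4.2} we have $\ell_i = -\frac{\varrho}{m}\mu_i$, so that
\begin{equation*}
b_i(x,u) \,=\, -\frac{\varrho}{m}\mu_i - \mu_i\bigl(x_i - u_i^c\langle e,x\rangle^+\bigr)\,,\qquad i\in\cI\,.
\end{equation*}
Here $B_1 = \diag(\mu_1,\dotsc,\mu_m)$, so the functions $\Psi$, $V_1$, $V_2$, $V$ in \cref{ED4.1A,ED4.1C} are all well defined with $\Tilde\mu_i = \mu_i$. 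First I would introduce, exactly as in \cref{EF}, the quantities $F_i(x,u) = V_i(x)^{-1}\langle b(x,u),\nabla V_i(x)\rangle$ for $i=1,2$, and compute them using $\nabla V_2(x) = V_2(x)\bigl(\psi'_\veo(x_i)/\mu_i\bigr)_{i\in\cI}$ and $\nabla V_1(x) = -\theta V_1(x)\bigl(\psi'_\veo(-x_i)/\mu_i\bigr)_{i\in\cI}$. This yields
\begin{equation*}
\begin{aligned}
\tfrac{1}{\theta}F_1(x,u) &\,=\, \tfrac{\varrho}{m}\sum_{i\in\cI}\psi'_\veo(-x_i) + \sum_{i\in\cI}\psi'_\veo(-x_i)\bigl(x_i - u_i^c\langle e,x\rangle^+\bigr)\,,\\
F_2(x,u) &\,=\, -\tfrac{\varrho}{m}\sum_{i\in\cI}\psi'_\veo(x_i) - \sum_{i\in\cI}\psi'_\veo(x_i)\bigl(x_i - u_i^c\langle e,x\rangle^+\bigr)\,.
\end{aligned}
\end{equation*}

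Next I would bound these using the elementary inequalities for $\psi_\veo$ collected after \cref{ED4.1C} — namely $0\le\psi'_\veo\le\veo$, the estimate \cref{ED4.1D} giving $\sum_i\psi'_\veo(-x_i)x_i \le -\veo\norm{x^-}_1 + \tfrac{m}{2}$ (and its positive-part analogue), and $\sum_i\psi'_\veo(-x_i)u_i^c\langle e,x\rangle^+ \ge 0$ on $\cK_0^+$ while this term is zero on $\cK_0^-$. For \cref{ELstableA1} I would split $\bigl(\cK_\delta^+\bigr)^c$ into $\cK_0^-$ and $\cK_0^+\setminus\cK_\delta^+$: on the former, $F_1\le\theta(\varrho\veo + \tfrac{m}{2} - \veo\norm{x^-}_1)$ and $F_2\le\veo\norm{x}_1$ (but $V_1\ge V_2^2$ there by \cref{ED4.2B} and the choice $\theta\ge 2\mu\mx/\mu\mn$, so the $V_1$ term dominates); on the latter, $\norm{x^-}_1\ge\tfrac{1-\delta}{2}\norm{x}_1$ by \cref{ED4.2A}, giving linear decay in both $F_1/\theta$ and $F_2$. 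Combining as in \cref{PL4.1C,PL4.1F}, one gets $\langle b,\nabla V\rangle \le \bigl(\theta(1+\varrho\veo) - c\veo\norm{x}_1\bigr)V$ off a suitable cube, hence \cref{ELstableA1}. For \cref{ELstableA2}, on $\cK_\delta^+$ the term $\sum_i\psi'_\veo(x_i)u_i^c\langle e,x\rangle^+ \le \veo\langle e,x\rangle = \veo\langle e,x\rangle^+$ (since $\langle e,x\rangle\ge0$ there), and $\sum_i\psi'_\veo(x_i)x_i \ge \veo\langle e,x^+\rangle - \tfrac{m}{2}$ by the inequality just after \cref{ED4.1D}; hence the two $\langle e,x\rangle^+$-contributions cancel up to the constant and $F_2 \le -\tfrac{\varrho\veo}{m} + \tfrac{m}{2}$, which after absorbing the constant into $V_2$'s coefficient on a large enough cube (or simply noting \cref{ELstableA2} is needed only up to the constant already allowed in \cref{Lstable}) gives the required bound $\langle b,\nabla V_2\rangle \le -\tfrac{\varrho\veo}{m}V_2$; more precisely one argues exactly as in the last display of the proof of \cref{L4.2}, using $\langle e,x\rangle^- = 0$ on $\cK_0^+$ so that the computation degenerates to $F_2 \le -\tfrac{\varrho\veo}{m}$.

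This case is genuinely easier than \cref{L4.2} because $B_2=0$ eliminates the $\langle e,x\rangle^-$ cross terms involving $\eta_{ij}$, so the factor $\underline\eta\wedge 1$ disappears and no sign analysis of $(\eta_{ij}-1)$ is needed. The only point requiring a little care — the main obstacle, such as it is — is ensuring the cube $K_r$ (equivalently, the constant $C_0$ in \cref{ELstableB}) can be chosen uniformly in $u\in\varDelta$ and in $\delta$ near $1$: this is handled exactly as in \cref{L4.1,L4.2} by noting that $V_2$ has exponential growth in $\norm{x}_1$ on $\cK_\delta^+$ while $\Lg_u V_1 \le C(1+\norm{x}_1)V_1$ and $V_2\ge V_1^2$ on $\cK_\delta^+\cap K_r^c$ for $\delta$ close enough to $1$, so the $V_1$ contribution is dominated there. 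I would close by stating that \cref{ELstableA1,ELstableA2} have thus been verified, which is all that is claimed.
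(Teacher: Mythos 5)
Your proposal is correct and follows essentially the same route as the paper: compute $F_1,F_2$ from \cref{Edrift-i} (with $B_2=0$), bound $F_1$ on $\cK_0^-$ via \cref{ED4.1D} and let the choice of $\theta$ (so that $V_1\ge V_2^2$ on $\cK_0^-$) absorb the $F_2$ contribution there, and observe that on $\cK_0^+$ the expressions coincide with those of \cref{L4.2}, whose estimates \cref{PL4.2B,PL4.2C} give \cref{ELstableA1,ELstableA2}. The only caveat is that your intermediate bound $F_2\le-\tfrac{\varrho\veo}{m}+\tfrac{m}{2}$ on $\cK_\delta^+$ would not suffice as stated (the constant cannot be absorbed into $-\tfrac{\varrho\veo}{m}V_2$ near the origin), but you correctly replace it with the exact cancellation $\sum_i\psi'_\veo(x_i)x_i\ge\veo\langle e,x\rangle\ge\sum_i\psi'_\veo(x_i)u_i^c\langle e,x\rangle^+$ from \cref{ED4.2D}, as in the last display of \cref{L4.2}.
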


\begin{proof}
A simple calculation using \cref{Edrift-i} shows that
\begin{equation}\label{PL4.3A}
\begin{aligned}
\frac{1}{\theta}\,F_1(x,u)
&\,=\, \frac{\varrho}{m}\sum_{i\in\cI}\psi'_\veo(-x_i)
+ \sum_{i\in\cI}\psi'_\veo(-x_i)\bigl(x_i - u_i^c \langle e,x\rangle^+\bigr)\,,\\[5pt]
F_2(x,u) &\,=\,-\frac{\varrho}{m}\sum_{i\in\cI}\psi'_\veo(x_i)
- \sum_{i\in\cI}\psi'_\veo(x_i)\bigl(x_i - u_i^c \langle e,x\rangle^+\bigr)\,.
\end{aligned}
\end{equation}

Using \cref{PL4.3A}, we obtain
\begin{equation*}
\frac{1}{\theta}\,F_1(x,u) \,\le\, \varrho \veo  + \frac{m}{2}
- \frac{\veo}{2}\norm{x}^{}_1 \qquad \forall (x,u)\in \cK_0^-\times \varDelta\,,
\end{equation*}
Therefore, \cref{ELstableA1} holds
on $\cK_0^-\times \varDelta$ by this inequality and the choice of $\theta$.

On $\cK_0^+\times \varDelta$, the equations in \cref{PL4.3A} are identical
to the corresponding ones for a network with a dominant server pool,
for which the result has already been established in \cref{L4.2}.  
This completes the proof.
\end{proof}

\section{Uniform exponential ergodicity of the \texorpdfstring{$n^{\mathrm{th}}$}{}
system}\label{S5}

In this section we show that if $\varrho_n>0$ then the
prelimit of a network with a dominant server pool, or with
class-dependent service rates,
is uniformly exponentially ergodic and the invariant distributions
have exponential tails.

Recall that $\{\Tilde\mu_i\,,\;i\in\cI\}$ are the elements of the
diagonal matrix $B_1^n$ in \cref{Ebn}.
Throughout this section $V$ denotes the function in \cref{ED4.1C}, with
$\veo$ given by
\begin{equation}\label{Eveon}
\veo\,=\, \veo_n\,\df\, \frac{\varrho_n}{3m}
\Biggl(\sum_{i\in\cI}\frac{1}{n}
\frac{\lambda^n_i(3\Tilde\mu^n_i+2)}{(\Tilde\mu^n_{i})^2}\Biggr)^{-1}
\,\exp\Biggl(-\frac{1}{\sqrt n}\,\sum_{i\in\cI}\frac{1}{\Tilde\mu^n_{i}}\Biggr)\,.
\end{equation}

Recall the definition of the operator
$\widehat\cL_{\Hat{z}}^n$ in \cref{E-Gn}, and the definitions of
$\sS^n$ and $\tcZn(\Hat{x})$ in \cref{EsSn,D2.3}.
We start with the following simple assertion.

\begin{lemma}\label{L5.1}
Let $V$ be the function in \cref{ED4.1C} with $\veo$ as in \cref{Eveon},
and $\theta$ fixed at some value.
Suppose that for any $\delta\in(0,1)$ there exist positive constants
$c_0$ and $c_1$ such that the drift $b^n$ in \cref{Ebn} satisfies
\begin{equation}\label{EL5.1A}
\begin{aligned}
\bigl\langle b^n(\Hat{x},\Hat{z}),\nabla V(\Hat{x}) \bigr\rangle &\,\le\, 
c_0 - \veo c_1 \norm{x}^{}_1 V(\Hat{x})
 \qquad\forall\,\Hat{x}\in\sS^n\setminus\cK_\delta^+\,,
 \ \forall\,\Hat{z}\in\tcZn(\Hat{x})\,, \\[5pt]
\bigl\langle b^n(\Hat{x},\Hat{z}),\nabla V_2(\Hat{x}) \bigr\rangle &\,\le\,
- \frac{\varrho_n\veo_n}{2m} V_2(\Hat{x})
\qquad\mspace{44mu}
\forall\,\Hat{x}\in\sS^n\cap\cK_\delta^+\,,\ \forall\,\Hat{z}\in\tcZn(\Hat{x})\,.
\end{aligned}
\end{equation}
Then, there exists a constant $\widehat{C}_0$ such that
\begin{equation}\label{EL5.1B}
\widehat\cL_{\Hat{z}}^n V\bigl(\Hat{x}\bigr)
\,\le\, \widehat{C}_0 - \frac{\varrho_n\veo_n}{4m} V(\Hat{x})\qquad
\forall\,\Hat{x}\in\sS^n\,,\ \forall\,\Hat{z}\in\tcZn(\Hat{x})\,.
\end{equation}
\end{lemma}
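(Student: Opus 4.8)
Proof strategy. The plan is to mimic the proof of \cref{Lstable}, replacing the operator $\Lg_u$ by $\widehat\cL_{\Hat{z}}^n$, the additional work being the control of the gap between the discrete generator \cref{E-Gn} and the ``formal'' operator $f\mapsto \frac12\trace(a\nabla^2f)+\langle b^n(\Hat{x},\Hat{z}),\nabla f\rangle$ attached to the drift $b^n$ in \cref{Ebn}. Applying the Taylor identity \cref{Eident} to $V_1$ and $V_2$, one obtains, for $k=1,2$,
\begin{equation*}
\widehat\cL_{\Hat{z}}^n V_k(\Hat{x})\,=\,\langle b^n(\Hat{x},\Hat{z}),\nabla V_k(\Hat{x})\rangle+\mathsf A^n_k(\Hat{x})+\mathsf B^n_k(\Hat{x},\Hat{z})\,,
\end{equation*}
where $\mathsf A^n_k(\Hat{x})\df\sum_{i\in\cI}\frac{\lambda^n_i}{n}\int_0^1(1-t)\bigl[\partial_{x_ix_i}V_k(\Hat{x}+\tfrac t{\sqrt n}e_i)+\partial_{x_ix_i}V_k(\Hat{x}-\tfrac t{\sqrt n}e_i)\bigr]\D t\ge0$ is the second-order part and $\mathsf B^n_k(\Hat{x},\Hat{z})\df-\sum_{i\in\cI}\frac{b^n_i(\Hat{x},\Hat{z})}{\sqrt n}\int_0^1(1-t)\,\partial_{x_ix_i}V_k(\Hat{x}-\tfrac t{\sqrt n}e_i)\D t$ is the finite-difference correction to the drift, formally of order $n^{-\nicefrac12}$. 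Using $\psi'_\veo\le\veo$, $\psi''_\veo\le\tfrac32\veo^2$, and the fact that $V_k(\Hat{x}\pm\tfrac t{\sqrt n}e_i)=V_k(\Hat{x})\,\E^{\order(1/\sqrt n)}$, one first checks that $\mathsf A^n_1\le C\,V_1$, and that $\mathsf A^n_2\le\tfrac{\varrho_n\veo_n}{4m}V_2$; it is here that the tilting factor $\exp\!\bigl(-\tfrac1{\sqrt n}\sum_{i}\tfrac1{\Tilde\mu_i}\bigr)$ built into $\veo_n$ in \cref{Eveon} is used, to absorb the $n$-dependent discrepancy between $V_2$ at $\Hat{x}$ and at the neighbouring lattice points (compare the computation preceding \cref{PLstableA}).

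For the correction terms one reduces first to $\Breve{\sX}^n$ of \cref{EhsX}: by \cite[Lemma~3]{Atar-05b} and \cref{D2.3}, any $\Hat{z}\in\tcZn(\Hat{x})$ with $\Hat{x}\in\Breve{\sX}^n$ is jointly work conserving, so $\Hat\vartheta^n(\Hat{x},\Hat{z})=0$, and then on $\cK_\delta^+\subset\cK_0^+$ the drift \cref{Ebn} collapses to $b^n_i(\Hat{x},\Hat{z})=\ell^n_i-\Tilde\mu_i\bigl(\Hat{x}_i-u^c_i\langle e,\Hat{x}\rangle\bigr)$, in which the two large pieces $-\Tilde\mu_i\Hat{x}_i$ and $\Tilde\mu_iu^c_i\langle e,\Hat{x}\rangle$ almost cancel when paired against $\nabla V_2$ (this is the mechanism of \cref{PL4.2C}). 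Carrying that near-cancellation through $\mathsf B^n_2$ — rather than using the crude estimate $\abs{\mathsf B^n_2}\le\tfrac{C}{\sqrt n}\norm{b^n}^{}_1V_2$, which is useless since $\norm{\Hat{x}}^{}_1$ may be of order $\sqrt n$ on $\Breve{\sX}^n$ — one obtains $\mathsf B^n_2(\Hat{x},\Hat{z})\le\tfrac{\varrho_n\veo_n}{8m}V_2(\Hat{x})$ on $\cK_\delta^+\cap\Breve{\sX}^n$, while $\abs{\mathsf B^n_1(\Hat{x},\Hat{z})}\le C(1+\norm{\Hat{x}}^{}_1)V_1(\Hat{x})$. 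Combined with the second inequality of \cref{EL5.1A}, this gives, on $(\cK_\delta^+\cap\Breve{\sX}^n)\setminus K_r$, that $\widehat\cL_{\Hat{z}}^n V_2\le-\tfrac{\varrho_n\veo_n}{8m}V_2$ and $\widehat\cL_{\Hat{z}}^n V_1\le C(1+\norm{\Hat{x}}^{}_1)V_1\le C(1+\norm{\Hat{x}}^{}_1)\sqrt{V_2}$, using $V_2\ge V_1^2$ on $\cK_\delta^+\setminus K_r$ (valid for $\delta$ close to $1$ and $r$ large, by the choice of $\theta_0$ and an estimate of the type \cref{ED4.2B}); since $V_2$ grows exponentially in $\norm{\Hat{x}}^{}_1$ on $\cK_\delta^+$, the polynomial-times-$\sqrt{V_2}$ term is swallowed and \cref{EL5.1B} follows there.

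On $\bigl(\cK_\delta^+\bigr)^c$ one retains the crude bound on $\mathsf B^n_k$, with $\Hat\vartheta^n$ controlled by the a priori estimate on $\Hat\vartheta^n_\ast$ established earlier in the paper; the first inequality of \cref{EL5.1A} supplies the term $-\veo c_1\norm{\Hat{x}}^{}_1V$, which for $n$ large dominates both $\mathsf A^n_k$ ($\le CV$) and $\mathsf B^n_k$ ($\le\tfrac{C}{\sqrt n}\norm{\Hat{x}}^{}_1V$), so \cref{EL5.1B} holds once $\norm{\Hat{x}}^{}_1$ exceeds a fixed $R$; for $\norm{\Hat{x}}^{}_1\le R$ all quantities are bounded and one simply enlarges $\widehat C_0$. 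The step I expect to be the real obstacle is the bound on $\mathsf B^n_2$ over $\cK_\delta^+\cap\Breve{\sX}^n$: there $\norm{\Hat{x}}^{}_1$ can be as large as $M_0\sqrt n$, so the naive $n^{-\nicefrac12}\norm{b^n}^{}_1V_2$ estimate is not summable against the merely $\order(V_2)$ negative drift that \cref{EL5.1A} provides on $\cK_\delta^+$, and one must genuinely propagate the structural near-cancellation of $b^n$ on $\cK_0^+$ — including the contributions weighted by $\psi''_\veo$, which are supported only where $\Hat{x}_i$ is bounded — through the Taylor remainders.
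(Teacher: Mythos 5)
Your overall strategy coincides with the paper's: expand $\widehat\cL^n_{\Hat z}V_k$ via \cref{Eident} into $\langle b^n,\nabla V_k\rangle+\mathsf A^n_k+\mathsf B^n_k$, absorb the second-order remainder of $V_2$ into $\tfrac{\varrho_n\veo_n}{4m}V_2$ using the exponential tilting built into \cref{Eveon}, and then rerun the argument of \cref{Lstable} ($\delta$ close to $1$ so that $V_2\ge V_1^2$ on $\cK_\delta^+$ away from a compact set, with the exponential growth of $V_2$ swallowing the polynomial bound on $\widehat\cL^n_{\Hat z}V_1$, and the first line of \cref{EL5.1A} handling $(\cK_\delta^+)^{\mathsf c}$). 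The paper's proof is essentially your first and last paragraphs compressed into three lines.

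The gap is in your middle paragraph. You prove the key bound $\mathsf B^n_2\le\tfrac{\varrho_n\veo_n}{8m}V_2$ only on $\cK_\delta^+\cap\Breve{\sX}^{n}$, because the mechanism you invoke (joint work conservation, hence $\Hat\vartheta^n=0$ and the collapsed form of $b^n$ whose pieces nearly cancel against $\nabla V_2$) is only guaranteed there. But $\cK_\delta^+\setminus\Breve{\sX}^{n}$ is nonempty --- $\Hat{x}^+$ is unbounded on $\sS^n$ --- and the ``reduction to $\Breve{\sX}^{n}$'' you announce is never carried out, so \cref{EL5.1B} is left unproved on that set. The gap is fixable, and the fix shows the detour is unnecessary: by \cref{E-GnC}, $b^n_i=\ell^n_i-\sum_j\mu^n_{ij}\Hat{z}_{ij}$ depends on the state only through $\Hat{z}$, and the hard constraints $0\le z_{ij}\le N^n_j$ force $\abs{\Hat{z}_{ij}}\le C\sqrt n$, hence $\abs{b^n_i}\le C\sqrt n$ uniformly on \emph{all} of $\sS^n$, with no work-conservation structure needed. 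So the ``crude'' estimate is not useless: since $0\le\partial_{x_ix_i}V_2\le C_i\veo_n^2\,V_2$ and $V_2$ is nondecreasing in each coordinate, one gets $\abs{\mathsf B^n_2}\le C\,\veo_n^2\,V_2$ on all of $\sS^n$, which is of the same order as $\mathsf A^n_2$; the question then reduces to a comparison of fixed network constants (this is what the slack between the $\tfrac{1}{3m}$ in \cref{Eveon} and the $\tfrac{1}{4m}$ in the conclusion, together with the tilting factor, is meant to absorb), not to any structural near-cancellation of $b^n$. This global $\order(\sqrt n)$ bound on the drift is exactly what the paper's one-line assertion $\widehat\cL^n_{\Hat z}V_2\le\tfrac{\varrho_n\veo_n}{4m}V_2+\langle b^n,\nabla V_2\rangle$ implicitly relies on; your JWC-based cancellation argument is neither needed for this step nor, as written, sufficient to cover $\sS^n\cap\cK_\delta^+$.
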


\begin{proof}
A simple calculation shows that
\begin{equation*}
\int_0^1 (1-t)\,
\partial_{x_ix_i} V_2\Bigl(\Hat{x}\pm \tfrac{t}{\sqrt n} e_i\Bigr) \,\D{t}
\,\le\,
\frac{\veo_n^2}{2}\Biggl(\sum_{i\in\cI}
\frac{(3\Tilde\mu^n_i+1)}{(2\Tilde\mu^n_{i})^2}\Biggr)
\,\exp\Biggl(\frac{1}{\sqrt n}\,
\sum_{i\in\cI}\frac{1}{\Tilde\mu^n_{i}}\Biggr)\,V_2(\Hat{x})\,.
\end{equation*}
Thus, using \cref{Eident} to express the first and second order incremental quotients
in \cref{E-Gn}, we obtain
\begin{equation*}
\widehat\cL_{\Hat{z}}^n V_2\bigl(\Hat{x}\bigr)
\,\le\, \frac{\varrho_n\veo_n}{4m} V_2(\Hat{x})
+ \bigl\langle b^n(\Hat{x},\Hat{z}),\nabla V_2(\Hat{x}) \bigr\rangle\,.
\end{equation*}
The rest follows as in the proof of \cref{Lstable} by selecting $\delta$
sufficiently close to $1$.
\end{proof}

\begin{remark}
Recall \cref{Ebn}.
In direct analogy to \cref{R4.2}, if we let
$\zeta^n =\frac{\varrho_n}{m}  e+ (B_1^n)^{-1}\ell^n$, with $\varrho_n$
as in \cref{Evarrho}, then a mere translation of the origin of the form
$\tilde{X}^n = \Hat{X}^n + \zeta^n$
results in a diffusion of with the same drift
as \cref{Ebn}, except that
the vector $\ell^n$ gets replaced by $\ell^n=- \frac{\varrho_n}{m} (B_1)^n e$.
Therefore,  we may assume without any loss of
generality that
the drift in \cref{Ebn} takes the form
\begin{equation}\label{ER5.1A}
b^n(\Hat{x},\Hat{z}) \,=\, -\frac{\varrho_n}{m}B_1^n e
- B_1^n \bigl(\Hat{x}-\langle e,\Hat{x}\rangle^{+} u^c\bigr)
+ B_2^n u^s \langle e,\Hat{x}\rangle^{-}
+\Hat{\vartheta}^{n}(\Hat{x},\Hat{z}) \bigl(B_1^n u^c + B_2^n u^s\bigr)\,.
\end{equation}

Note that this centering
has the effect of translating the `equilibrium' allocations
$\overline{z}^n_{ij}$ given in \cref{Eequil}.
Since this translation is of $\order(n^{-\nicefrac{1}{2}})$, it has no effect
on the
results for large $n$. However, in the interest of providing precise estimates
we calculate the new values of $\overline{z}^n_{ij}$.
Note that $\langle e,\zeta^n\rangle=0$, and recall the map $\Phi$ in \cref{EPhi}.
Let $\Check{z}^n_{ij} = \Phi(\zeta^n,0)$.
Then, the centering of $\Hat{x}$ that results in \cref{ER5.1A}
is given by (compare with \cref{Eequil})
\begin{equation}\label{Eequil'}
\Bar{z}^n_{ij} \,=\, \frac{1}{n} \xi^*_{ij}N_j^n + \frac{\Check{z}^n_{ij}}{\sqrt n}\,,
\qquad \Bar{x}^n_i \,\df\, \sum_{j\in\cJ} \Bar{z}^n_{ij}\,.
\end{equation}
Throughout this section the family
$\{\overline{z}^n_{ij}\; (i,j)\in\cE\}$ is as given in \cref{Eequil'}.
\end{remark}
\begin{remark}
Recall \cref{D2.3,EhsX}. Let $\Hat{z}\in\tcZn(\Hat{x})$.
Then, $\Hat{\vartheta}^n(\Hat{x},\Hat{z})
= \Hat{\vartheta}^n_\ast(\Hat{x})=0$ for all $\Hat{x}\in\Breve{\sX}^{n}$,
and in view of \cref{ER5.1A},
for any $\Hat{x}\in\Breve{\sX}^{n}$, 
there exists $u=u(\Hat{x},\Hat{z})\in\varDelta$ such that
\begin{equation}\label{Ebnjwc}
b^n(\Hat{x},\Hat{z}) \,=\, -\frac{\varrho_n}{m}B_1^n e 
- B_1^n \bigl(\Hat{x}-\langle e,\Hat{x}\rangle^{+} u^c\bigr)
+ B_2^n u^s \langle e,\Hat{x}\rangle^{-}\,.
\end{equation}

In view of \cref{L5.1}, and using \cref{L4.1,L4.2,L4.3},
it is clear that Foster--Lyapunov equations for $\Lg_u$ carry over
to analogous equations for $\widehat\cL_{\Hat{z}}^n$ on $\Breve{\sX}^{n}$
uniformly over SWC policies.
However, even though $\Breve{\sX}^{n}$ fills the whole space as $n\to\infty$,
$b$ and $b^n$ differ in functional form when
$\Hat{\vartheta}^n(\Hat{x},\Hat{z})\ne0$, and this makes the stability
analysis of multiclass multi-pool networks much harder than the
`V'~network studied in \cite{AHP18}.
\end{remark}

\begin{notation}\label{D5.1}
Let $\veo_n$ and $\Bar{z}^n_{ij}$ as in \cref{Eveon,Eequil'}, respectively.
For a network with a dominant server pool as in \cref{S4.3}
define
\begin{equation}\label{ED5.1A}
n_0 \,\df\, \max\,\biggl\{n\in\NN\,\colon
\frac{1}{\sqrt n}\ge \veo_n\,\min_{i\in\cI}\, \Bar{z}^n_{i1} \biggr\}\,,
\end{equation}
while for network with class-dependent service rates, we let
\begin{equation}\label{ED5.1B}
n_0 \,\df\, \max\,\biggl\{n\in\NN\,\colon 
\frac{1}{\sqrt n} \ge \frac{\veo_n}{2m}\,\min_{i\sim j}\,\Bar{z}^n_{ij}\biggr\}\,.
\end{equation}
Since $\{\veo_n\}$ and $\{\Bar{z}^n_{i1}\}$ are bounded away from $0$
by the convergence of the parameters in \cref{EHW},
the number $n_0$ is finite.
\end{notation}

The next theorem is the main result for the uniform exponential ergodicity of the prelimit processes.
Recall \cref{Evarrho}. Notice the similarities between the results in \cref{T5.1} for the diffusion-scaled processes and the results in \cref{Tstable,R4.1} for the diffusion limit of the networks under consideration.
\begin{theorem}\label{T5.1}
 Assume that $\varrho_n>0$, and let $n_0$ be as in \cref{D5.1}.
Then the prelimit dynamics of any network with a dominant server pool
or with  class-dependent service rates are uniformly exponentially ergodic
and the invariant distributions have exponential tails for all $n> n_0$.
In particular, due to the convergence of the parameters,
there exists $\widehat{C}_0$ independent of $n$ such that 
\begin{equation}\label{ET5.1A}
\widehat\cL_{\Hat{z}}^n V\bigl(\Hat{x}\bigr)
\,\le\, \widehat{C}_0 - \frac{\varrho_n\veo_n}{4m} V(\Hat{x})\qquad
\forall\,\Hat{x}\in\sS^n\,,\ \forall\,\Hat{z}\in\tcZn(\Hat{x})\,.
\end{equation}
where $V$ and $\veo$ are as in \cref{ED4.1C,Eveon}. 

In addition, with $P_t^{n,\Hat{z}}$ and $\uppi^n_{\Hat{z}}$
denoting respectively the transition probability and the stationary distribution of
$\Hat{X}^{n}(t)$ under a policy $\Hat{z}\in\tfZn$, there exist positive constants
$\gamma$ and $C_\gamma$ not depending on $n\ge 0$ or $\Hat{z}$, such that
\begin{equation} \label{ET5.2B}
\bnorm{ P^{n,\Hat{z}}_t(\Hat{x},\cdot\,)-\uppi^n_{\Hat{z}}(\cdot)\,}_{V}\,\le\,
C_\gamma V(\Hat{x})\, \E^{-\gamma t}\,,
\qquad \forall\,\Hat{x}\in\sX^n\,,\ \forall\,t\ge0\,.
\end{equation}
\end{theorem}

\begin{proof}
In \cref{L5.4,L5.5} in the section which follows,  we establish
\cref{EL5.1A} for these networks.
Thus the proof of \cref{ET5.1A} follows directly from \cref{L5.1}.

Since the process $\Hat{X}^n$ is irreducible and aperiodic under
any stationary Markov scheduling $\Hat{z}\in\tfZn$ (see \cref{D2.3}),
a convergence property completely analogous to \cref{ER4.1A} follows from
\cref{ET5.1A}. The proof of this fact is identical to \cite[Theorem~2.1(b)]{AHP18}. 
\end{proof}

\begin{remark} \label{R5.3} 

Using \cref{ETstableB} and \cref{ET5.2B}, it is clear that under any scheduling policy $\Hat{z}\in\tfZn$ with a corresponding control $v$, the stationary distribution  of the diffusion-scaled process $\Hat{X}^{n}(t)$ converges to that of the limiting diffusion 
$\process{X}$ for the two classes of networks, that is, 
\begin{equation}\label{ER5.3A}
\uppi^n_{\Hat{z}}(\cdot) \,\to\, \uppi_v(\cdot)\,, \quad \text{as}\quad n \to \infty. 
\end{equation}
That is, the interchange of limits property holds. 

%
\end{remark} 

\subsection{Four technical lemmas}
In this section, we establish the technical results
used in the proof of \cref{T5.1}.

Let
\begin{equation}\label{EtsX}
\widetilde{\sX}^n\,\df\,
\bigl\{\Hat{x} \in \sS^n\,\colon  \Hat\vartheta_*^n(\Hat{x})\ne0  \bigr\}\,,
\end{equation}
with $\Hat\vartheta_*^n$ as in \cref{D2.3}.
As seen in \cref{S2.3}, the set $\Breve{\sX}^{n}$
in \cref{EhsX} is contained in $\sS^n\setminus\widetilde{\sX}^n$.
In establishing \cref{EL5.1A} on $\sS^n\setminus\widetilde{\sX}^n$, the results in
\cref{S4} pave the way, since the drift of of the
controlled generator $\widehat\cL_{\Hat{z}}^n$ over the class of SWC
stationary Markov policies $\tfZn$ (see \cref{Ebnjwc})
has the same functional form as the drift of the diffusion in \cref{ELg}.
So it remains to establish \cref{EL5.1A} in $\widetilde{\sX}^n$.
We start by establishing a bound for
$\Hat\vartheta^n$ in \cref{ED2.2B} over all SWC policies.

As done earlier in the interest of notational economy,
we suppress the dependence on $n$ in the diffusion
scaled variables $\Hat{x}^n$ and $\Hat{z}^n$ in \cref{ED2.1A}. 

\begin{lemma}\label{Lnice}
There exists a number $\varkappa^n_\circ<1$ depending only on
the parameters of the network such that
\begin{equation}\label{ELniceA}
\Hat{\vartheta}^{n}(\Hat{x},\Hat{z}) \,=\, \Hat\vartheta^n_*(\Hat{x})
\,\le\, \varkappa^n_\circ
\bigl(\norm{\Hat{x}^+}_1\wedge \norm{\Hat{x}^-}_1\bigr)
\quad \forall\,\Hat{z}\in\tcZn(\Hat{x})\,.
\end{equation}
In addition, due to the convergence of the parameters in \cref{EHW},
such a constant $\varkappa_\circ<1$ may be selected which does
not depend on $n$.
\end{lemma}

Before proceeding to the proof of \cref{Lnice}, we provide an interpretation of \cref{ELniceA}. Recall from \cref{Ebal} that $\hat{x}_i = \frac{1}{\sqrt{n}}(x_i - \sum_{j \in \cJ} \xi^*_{ij} N_j^n)$, and observe that $\hat{x}_i$ is positive if the total number of class $i$ customers exceeds the total number of servers assigned to class $i$ in the fluid equilibrium $\bigl(\sum_{j \in \cJ} \xi^*_{ij} N_j^n\bigr)$ and is negative otherwise. This means that the quantity $\norm{\Hat{x}^+}_1\wedge \norm{\Hat{x}^-}_1 $ on the right hand side of \cref{ELniceA} represents the minimum of the total number of customers in the queues and the total number of idle servers if the servers are assigned to customer classes according to the fluid equilibrium. Recall also that $\hat{\vartheta}^n_*$ represents the minimum of the total number of customers in the queues and the total number of idle servers under a SWC policy. The result in \cref{Lnice} is now clear, that is, the minimum of the total number of customers in the queues and idle servers is smaller under a SWC policy compared to a policy that assigns servers according to the fluid equilibrium.

\begin{proof}\label{PLnice}
Let $\Hat{x}\in\widetilde{\sX}^n$, $\Hat{z}\in\tcZn(\Hat{x})$, and define
\begin{equation*}
\widetilde\cJ\,\df\, \Biggl\{j\in\cJ\,\colon \sum_{j\in\cJ(i)} \Hat{z}_{ij}<0
\Biggr\}\,,\quad\text{and\ \ }
\widetilde\cI\,\df\, \bigl\{i\in\cI\,\colon (i,j)\in\cE\text{\ for some\ } j\in
\widetilde\cJ\bigr\}\,,
\end{equation*}
and $\widetilde\cE \df \bigl\{(i,j)\in\cE\,\colon
(i,j)\in \widetilde\cI\times\widetilde\cJ\bigr\}$.
Work conservation implies that $x_i^n = \sum_{j\in\cJ(i)} z_{ij}^n$
for all $i\in\widetilde\cI$.
Let $\Hat\imath\in\cI$ be such that $\Hat{q}_{\Hat\imath}>0$, and consider
the unique path (since the graph of the network is a tree)
connecting $\Hat\imath$ and $\widetilde\cI$, that is,
a path $\Hat\imath \rightarrow j_1 \rightarrow i_1 \rightarrow j_2 \dotsc \rightarrow j_k \rightarrow \Tilde\imath$,
with $j_\ell \in \cJ\setminus\widetilde\cJ$ for $\ell=1,\dotsc,k$,
$i_\ell\in \cI\setminus\widetilde\cI$ for $\ell=1,\dotsc,k-1$,
and $\Tilde\imath\in\widetilde\cI$.
We claim that $z_{\Tilde\imath,j_k}^n=0$, or equivalently,
that $\Hat{z}_{\Tilde\imath,j_k} = - \nicefrac{\Bar{z}_{ij}^n}{\sqrt n}$,
with $\Bar{z}_{ij}^n$ as defined in \cref{Eequil'}.
If not, then we can move a job of class $\Tilde\imath$ from pool
$j_k$ to some pool in $\widetilde\cJ$, and proceeding
along the path to place one additional job from class $\Hat\imath$ into service,
thus contradicting the hypothesis that $\Hat{z}\in\tcZn(\Hat{x})$.
Removing all such paths, we are left with
a strict subnetwork (possibly disconnected)
 $\cG_\circ=\bigl(\cI_\circ\cup\cJ_\circ, \cE_\circ)$,
with $\cI_\circ\supset\widetilde\cI$,
$\cJ_\circ\supset\widetilde\cJ$,
and $\cE_\circ \df \bigl\{(i,j)\in\cE\,\colon
(i,j)\in \cI_\circ\times\cJ_\circ\bigr\}$,
such that
\begin{equation}\label{PLniceB}
x_i^n = \sum_{j\in\cJ(i)\cap\cJ_\circ} z_{ij}^n\,,\quad\forall\,
i\in\cI_\circ\,.
\end{equation}
Let $\cE_\circ' \df
\bigl(\cI_\circ\times (\cJ\setminus\cJ_\circ)\bigr)\cap\cE$.
By \cref{PLniceB} we have
\begin{equation*}
\sum_{(i,j)\in\cE_\circ'} z^n_{ij} \,=\,0\,.
\end{equation*}
Thus we have
\begin{equation}\label{PLniceC}
\norm{\Hat{x}^-}_1 \,\ge\, -\sum_{i\in\cI_\circ} \Hat{x}_i
\,=\, \sqrt{n} \sum_{(i,j)\in\cE_\circ'}\Bar{z}_{ij}^n -
\sum_{(i,j)\in\cE_\circ} \Hat{z}_{ij}
\,\ge\, \sqrt{n}\sum_{(i,j)\in\cE_\circ'}\Bar{z}_{ij}^n
+ \Hat\vartheta^n_*(\Hat{x})
\end{equation}
by the construction above.
By \cref{PLniceC}, we obtain
\begin{equation}\label{PLniceD}
\begin{aligned}
\Hat\vartheta^n_*(\Hat{x}) &\,\le\,
\frac{\sum_{(i,j)\in\cE_\circ} \Hat{z}_{ij}}
{\sqrt{n}\sum_{(i,j)\in\cE_\circ'}\Bar{z}_{ij}^n -
\sum_{(i,j)\in\cE_\circ} \Hat{z}_{ij}}\, \sum_{i\in\cI_\circ} \Hat{x}_i\\
&\,\le\, -\frac{\sum_{(i,j)\in\cE_\circ} \Bar{z}^n_{ij}}
{\sum_{(i,j)\in\cE_\circ}\Bar{z}_{ij}^n +
\sum_{(i,j)\in\cE_\circ'} \Bar{z}^n_{ij}}\, \sum_{i\in\cI_\circ} \Hat{x}_i\,.
\end{aligned}
\end{equation}
Similarly, 
\begin{equation}\label{PLniceE}
\norm{\Hat{x}^+}_1 \,\ge\, \sum_{i\in\cI\setminus\cI_\circ} \Hat{x}_i
\,\ge\, \sqrt{n}\sum_{(i,j)\in\cE_\circ'}\Bar{z}_{ij}^n
+ \Hat\vartheta^n_*(\Hat{x})\,.
\end{equation}
Using the bound $\Hat\vartheta^n_*(\Hat{x})\le
\sqrt{n}\sum_{(i,j)\in\cE_\circ} \Bar{z}^n_{ij}$ we obtain from
\cref{PLniceE} that
\begin{equation}\label{PLniceF}
\begin{aligned}
\Hat\vartheta^n_*(\Hat{x})&\,\le\,
\frac{\Bigl(\sum_{i\in\cI\setminus\cI_\circ} \Hat{x}_i
-\sqrt{n}\sum_{(i,j)\in\cE_\circ'}\Bar{z}_{ij}^n
\Bigr)\wedge \sqrt{n}\sum_{(i,j)\in\cE_\circ} \Bar{z}^n_{ij}}
{\sum_{i\in\cI\setminus\cI_\circ} \Hat{x}_i}\,
\sum_{i\in\cI\setminus\cI_\circ} \Hat{x}_i\\[5pt]
&\,\le\,
\frac{\sum_{(i,j)\in\cE_\circ} \Bar{z}^n_{ij}}
{\sum_{(i,j)\in\cE_\circ} \Bar{z}^n_{ij}
+\sqrt{n}\sum_{(i,j)\in\cE_\circ'}\Bar{z}_{ij}^n}\,
\sum_{i\in\cI\setminus\cI_\circ} \Hat{x}_i\,.
\end{aligned}
\end{equation}

It should be now clear how to construct
$\varkappa_\circ^n$.
For any given subset $\cJ'\subsetneq \cJ$,
let
\begin{equation*}
\cI_{\cJ'}\,\df\, \cup_{j\in\cJ'}\, \cI(j)\,,\qquad
\cE_{\cJ'} \,\df\, \bigl\{(i,j)\in\cE\,\colon
(i,j)\in \cI_{\cJ'}\times\cJ'\bigr\}\,,
\end{equation*}
and
$\cE'_{\cJ'} \df \bigl(\cI_{\cJ'}\times (\cJ\setminus\cJ')\bigr)\cap\cE$,
and define
\begin{equation*}
\varkappa_\circ^n \,\df\,
\max_{\cJ'\subsetneq \cJ}\;
\frac{\sum_{(i,j)\in\cE_{\cJ'}} \Bar{z}^n_{ij}}
{\sum_{(i,j)\in\cE_{\cJ'}}\Bar{z}_{ij}^n +
\sum_{(i,j)\in\cE'_{\cJ'}} \Bar{z}^n_{ij}}\,.
\end{equation*}
Then the result clearly follows from \cref{PLniceD,PLniceF}
since
\begin{equation*}
\norm{\Hat{x}^-}_1 \,\ge\, -\sum_{i\in\cI_\circ} \Hat{x}_i\,,\quad\text{and}\quad
\norm{\Hat{x}^+}_1 \,\ge\, \sum_{i\in\cI\setminus\cI_\circ} \Hat{x}_i\,.
\end{equation*}
This completes the proof.
\end{proof}

\begin{remark}
We provide an example of a SWC policy for the `N' network with two classes of customers and two server pools where class 1 can be served by both server pools while class 2 can only served by pool 2 to explain the analysis in the proof of \cref{Lnice}. 
 The SWC policy is given by
\begin{align*}
z_{11}(x) & \,=\, x_1\wedge N^n_1 \\
z_{12} (x) &\,=\, 
\begin{cases}
(x_1 - N^n_1)^+ \wedge \xi^*_{12} N^n_{2} \qquad & \text{if } x_2 \ge \xi^*_{22} N^n_{2}\\
(x_1 - N^n_1)^+ \wedge (N_2^n - x_2) \qquad &\text{otherwise,}
\end{cases} \\
z_{22}(x) & \,=\, 
\begin{cases}
x_2 \wedge \xi^*_{22} N^n_{2} \qquad &\text{if } x_1 \ge N_1^n + \xi^*_{12} N^n_{2}\\
x_2\wedge\bigl(N_2^n - (x_1 - N_1^n)^+\bigr)\qquad & \text{otherwise.}
\end{cases}
\end{align*}
This is a priority policy in which customer class $1$ prefers pool $1$ over pool $2$.
 This means that customer class $1$ can use servers in pool $2$ only if there are no idle servers in pool $1$.

Recall from \cref{Edszx} and \cref{Ebal} that $\hat{q}_i \ge 0$ for all $i \in \cI$. Recall also that we are only considering work-conserving policies and that $\hat\vartheta^n$ represents the minimum of the total number of customers in the queues and the total number of idle servers. For the `N' network with two classes of customers and two server pools, we have the following cases:

\begin{itemize}
\item[Case 1]: $\hat{q}_1 > 0$ and $\hat{q}_2 \ge 0$, in which case  we have no idle servers and hence $\hat{\vartheta}^n_*(\hat{x}) = 0$.

\item[Case 2]: $\hat{q}_1 = 0$ and $\hat{q}_2 =0$. Again, this is a trivial case and means that we have no customers in the queues which implies that $\hat{\vartheta}^n_*(\hat{x}) = 0$.

\item[Case 3]: $\hat{q}_1 = 0$ and $\hat{q}_2 > 0$. This is actually the case that requires some analysis. Here again we have the following cases:
\begin{itemize}
\item[a]: $\hat{y}_1 = 0$ which means that there are no idle servers in pool 1 and hence $\hat{\vartheta}^n_*(\hat{x}) = 0$. This is because $\hat{q}_2 > 0$ which implies that $\hat{y}_2 = 0$ under work conservation.

\item[b]: $\hat{y}_1 > 0$ which means that there are some idle servers in pool $1$. This is the case analyzed in the proof of \cref{Lnice}. (Observe the following notation in the proof: $\widetilde{\mathcal{J}} = \{1\}$, $\widetilde{\mathcal{I}} = \{1\}$, $\Hat\imath=2$, the path is  $\text{class 2} \rightarrow \text{pool 2} \rightarrow \text{class 1}$, and $\cG_\circ = \{(1,1)\}$.) 
Note that since we are using a SWC policy, this means that no class $1$ customers are being served in pool $2$ because otherwise, one can move a class $1$ customer from pool $2$ to pool $1$ and get a smaller $\hat{\vartheta}^n$. Hence, 
\begin{equation*}
\hat{\vartheta}^n_*(\hat{x}) = \frac{1}{\sqrt{n}} \bigl((N_1^n - x_1) \wedge (x_2 - N_2^n)\bigr)\,,
\end{equation*}
where $(N_1^n - x_1)$ is the total number of idle server and $(x_2 - N_2^n)$ is the total number of customers in the queues. This is because the only pool having idle servers is pool $1$ and the only class having customers in the queue is class $2$ ($\hat{q}_1 = 0$).

Recall that $\hat{x}_i = \frac{1}{\sqrt{n}}(x_i - \sum_{j \in \cJ} \xi^*_{ij} N_j^n)$. 
This means in this case that
\begin{equation*}
\begin{aligned}
\hat{x}_1^+ = 0 \qquad\,&; \qquad \hat{x}_1^- = \frac{1}{\sqrt{n}}(N_1^n + \xi^*_{12} N_2^n - x_1)\\
\hat{x}_2^+ = \frac{1}{\sqrt{n}}(x_2 - \xi^*_{22} N_2^n) \qquad\,&;\qquad \hat{x}_2^- = 0\,.
\end{aligned}
\end{equation*}

Therefore, we have the following equation
\begin{equation*}
\norm{\Hat{x}^+}_1\wedge \norm{\Hat{x}^-}_1 = \frac{1}{\sqrt{n}}\Bigl((N_1^n + \xi^*_{12} N_2^n - x_1) \wedge (x_2 - \xi^*_{22} N_2^n)\Bigr).
\end{equation*}
Note also that \cref{ELP-id} ($\xi^*_{12} + \xi^*_{22} = 1$) implies that 
\begin{equation*}
N_1^n - x_1  \le x_2 - N_2^n \iff N_1^n + \xi^*_{12} N_2^n - x_1 \le  x_2 - \xi^*_{22} N_2^n\,.
\end{equation*}
It is now clear that there exists a constant $\varkappa_\circ^n < 1$ such that $\hat{\vartheta}^n_* < \varkappa_\circ^n \bigl(\norm{\Hat{x}^+}_1\wedge \norm{\Hat{x}^-}_1\bigr)$ where 
$$\varkappa_\circ^n = \frac{N_1^n - x_1}{N_1^n + \xi^*_{12} N_2^n - x_1} \vee \frac{x_2 - N_2^n}{x_2 - \xi^*_{22} N_2^n}.$$
\end{itemize}
This completes the analysis of all the cases.
\end{itemize}

\end{remark}
\begin{remark}\label{R5.2}
It is easy to see that the estimates of the bounds on $\Hat{\vartheta}^n$
can be improved. It is clear from \cref{PLniceC,PLniceD}, that
\begin{equation*}
\Hat\vartheta^n_*(\Hat{x}) \,\le\,
\Biggl(-\varkappa_0^n \sum_{i\in\cI_0}\Hat{x}_i\Biggr)
\wedge \Biggl(\sum_{i\in\cI\setminus\cI_\circ} \Hat{x}_i\Biggr)
\qquad \forall\,\Hat{x}\in\widetilde{\sX}^n\,.
\end{equation*}
Also, since there can be at most $\sum_{j\in\cJ} N_j^n$ idle servers,
it follows that
$\widetilde\varkappa_\circ^n\in(0,1)$, such that
\begin{equation*}
-\sum_{i\in\cI_0}\Hat{x}_i \,\ge\,
\widetilde\varkappa_\circ^n \norm{\Hat{x}^-}^{}_1
\qquad \forall\,\Hat{x}\in\widetilde{\sX}^n\,,
\end{equation*}
where the constant $\widetilde\varkappa_\circ^n\in(0,1)$ can be selected as
\begin{equation*}
\widetilde\varkappa_\circ^n \,\df\,
\Biggl(\sum_{j\in\cJ} N_j^n\Biggr)^{-1}
\min_{(i, j) \,\in\, \cE}\, \xi^*_{ij} N_{j}^n\,.
\end{equation*}
Due to the convergence of the parameters in \cref{EHW},
$\widetilde\varkappa_\circ^n$ is bounded away from $0$ uniformly in $n\in\NN$.
Note that for the `N' network this translates to
\begin{equation*}
\widetilde\varkappa_\circ^n = \frac{N_1^n \wedge \xi^*_{12} N_2^n \wedge \xi^*_{22} N_2^n}{N_1^n + N_2^n}\,. 
\end{equation*}

\end{remark}

Even though the `N'~network is a special case
of networks with a dominant server pool
we first establish the result for this network in \cref{L5.3}
in order to exhibit with simpler calculations how \cref{Lnice} is applied. 

Throughout the proofs of \cref{L5.3,L5.4,L5.5} we use the
functions (compare with \cref{EF})
\begin{equation*}
F_i^n(\Hat{x},\Hat{z}) \,\df\, \frac{1}{V_i(x)}\,
\bigl\langle b^n(\Hat{x},\Hat{z}), \nabla V_1(\Hat{x}) \bigr\rangle\,,\qquad
i=1,2\,,
\end{equation*}
and let $n_0$ be as in \cref{D5.1}.
Moreover, we suppress the dependence on $n$ in the variables
$\Hat{q}^n$, $\Hat{y}^n$, and $\Hat{\vartheta}^n$ in \cref{ED2.2A,ED2.2B},
and from $\veo_n$ in \cref{Eveon}. 

\subsubsection{The diffusion-scale of the `N' network}

We recall here \cite{Stolyar-15b}. Recall also that we label the non-leaf server node as $j=1$ without loss of generality and hence we present Stolyar's work in \cite{Stolyar-15b} using our notation.
In this work, Stolyar considers the `N'~network with
$\order(\sqrt n)$ safety staffing in pool $1$,
under the priority discipline that class $2$ has priority in pool $1$
and class $1$ prefers pool $2$, and shows tightness of the invariant distributions.
First note that for any stationary Markov scheduling policy $z$,
such that class $2$ has priority in pool $1$ we have
$z_{21}^n(x) = x_2^n\wedge N_1^n$,
and it is clear that such a policy is SWC.
The same applies to Markov policies under which
class $1$ prefers pool $2$ (here $z_{12}^n(x) = x_1^n\wedge N_2^n$).
As a result, SWC policies are more general than the particular policy considered in
\cite{Stolyar-15b}.
Recall that the matrices $B_1^n$ and $B_2^n$ in the drift \cref{ER5.1A}
are given by 
\begin{equation}\label{EBi}
B_1^n \,=\, \begin{pmatrix} \mu^n_{11} & 0\\[5pt] 0 & \mu_{21}^n\end{pmatrix}\,,
\qquad\text{and\ \ }
B_2^n  \,=\, \begin{pmatrix}0& \mu_{12}^n - \mu_{11}^n\\[5pt] 0 & 0\end{pmatrix}
\end{equation}
It is also worth noting here, that the spare capacity $\varrho_n$ of the
$n^{\mathrm{th}}$ system is given by
\begin{equation*}
\varrho_n \,=\, - \frac{1}{\sqrt{n}} \biggl(\frac{\lambda^{n}_1}{\mu^n_{11}}
+\frac{\lambda^{n}_2}{\mu^n_{21}}
-  \frac{\mu_{12}^{n} N_2^n+\mu_{11}^{n} \xi^*_{11}N_1^n}{\mu^n_{11}}
- \frac{\mu_{21}^{n} \xi^*_{21}N_1^n}{\mu^n_{21}}\biggr)\,,
\end{equation*}
with
\begin{equation*}
\xi^*_{11} \,=\, \frac{\lambda_1 - \mu_{12}\nu_2}{\mu_{11} \nu_1}\,,
\quad\text{and}\quad
\xi^*_{21} \,=\, \frac{\lambda_2}{\mu_{21} \nu_1}\,.
\end{equation*}
This is clear by \cref{Eelln,EBi,Evarrho},
together with \cite[Equation (2)]{AP18}. 
We let $\eta^n \df \frac{\mu_{12}^n}{\mu_{11}^n}$.

\begin{lemma}\label{L5.3} 
Consider the `N'~network, and assume that $\varrho_n>0$.
Then for any
$\theta\ge\theta_0^n\df
\frac{\mu_{11}^n\, \vee\, \mu_{21}^n}{\mu_{11}^n\, \wedge \,\mu_{21}^n}$,
and $\delta\in(0,1)$,  there exist positive constants
$c_0$ and $c_1$ such that \cref{EL5.1A} holds  for all $n\ge n_0$.
\end{lemma}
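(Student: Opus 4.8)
The plan is to follow the same partition-and-estimate strategy used in the proof of \cref{L4.1}, but to track the extra term $\Hat\vartheta^n(\Hat{x},\Hat{z})\bigl(B_1^n u^c + B_2^n u^s\bigr)$ appearing in the drift \cref{ER5.1A}, which is absent from the diffusion drift \cref{EdriftN}. On the set $\sS^n\setminus\widetilde{\sX}^n$ we have $\Hat\vartheta^n_*(\Hat{x})=0$, so the drift $b^n$ reduces to \cref{Ebnjwc}, which has exactly the same functional form as $b$ in \cref{EdriftN} (with $\mu_{ij}$ replaced by $\mu_{ij}^n$, $\varrho$ by $\varrho_n$, and $\veo$ by $\veo_n$); hence the estimates in the proof of \cref{L4.1} carry over verbatim, and \cref{EL5.1A} holds there. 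It therefore remains to establish \cref{EL5.1A} on $\widetilde{\sX}^n$, which is where the real work lies.

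On $\widetilde{\sX}^n$, I would first compute $F_1^n$ and $F_2^n$ from \cref{ER5.1A}, obtaining expressions identical to those in the proof of \cref{L4.1} plus the extra contribution $\mp\theta^{\mathrm{sgn}}\,\Hat\vartheta^n(\Hat{x},\Hat{z})\sum_{i}\psi'_{\veo}(\mp x_i)\bigl(B_1^n u^c + B_2^n u^s\bigr)_i$. The key tool for controlling this new term is \cref{Lnice} (and its refinement in \cref{R5.2}), which gives $\Hat\vartheta^n(\Hat{x},\Hat{z})\le \varkappa_\circ^n\bigl(\norm{\Hat{x}^+}_1\wedge\norm{\Hat{x}^-}_1\bigr)$ with $\varkappa_\circ^n<1$ bounded away from $1$ uniformly in $n$. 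Since $\psi'_\veo$ is bounded by $\veo$, the extra term is bounded in absolute value by a constant multiple of $\veo\,\varkappa_\circ^n\bigl(\norm{\Hat{x}^+}_1\wedge\norm{\Hat{x}^-}_1\bigr)$, which on $(\cK_\delta^+)^c$ is dominated by $\veo\,\varkappa_\circ^n\norm{\Hat{x}^-}_1$ (and is $\le\veo\norm{\Hat{x}^-}_1$ up to the factor $\varkappa_\circ^n<1$). Carrying this through the estimates \cref{PL4.1A,PL4.1B,PL4.1D,PL4.1E}, the coefficient of $\norm{\Hat{x}}_1$ in the bound on $F_1^n$ on $\cK_0^-$ becomes $-\veo(\eta^n\wedge 1) + \veo\varkappa_\circ^n$ rather than $-\veo(\eta^n\wedge 1)$; because $\varkappa_\circ^n$ is strictly less than $1$ and bounded away from $1$, while simultaneously $\eta^n\to\eta$ and $1-\varkappa_\circ^n$ stays bounded below, one can still extract a strictly negative coefficient for $n$ large — this is precisely where the threshold $n_0$ from \cref{D5.1} enters, guaranteeing $\tfrac{1}{\sqrt n}\ge\veo_n\min_i\Bar{z}^n_{i1}$ so that the relevant margins survive. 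On $\cK_\delta^+$ (for $\delta$ close to $1$), the second inequality of \cref{EL5.1A} is obtained the same way: the extra term there is controlled by $\veo\varkappa_\circ^n\norm{\Hat{x}^+}_1$ times a bounded factor, but $B_1^n u^c + B_2^n u^s$ has $\langle e,\cdot\rangle$-component bounded by \cref{E-tran}, and combined with $\langle e,\Hat{x}\rangle^-$ being small relative to $\langle e,\Hat{x}\rangle^+$ on $\cK_\delta^+$, one recovers $F_2^n(\Hat{x},\Hat{z})\le -\tfrac{\varrho_n\veo_n}{2m}$.

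The main obstacle is the bookkeeping on $\widetilde{\sX}^n$: one must verify that the extra $\Hat\vartheta^n$-term never swamps the favorable $-\veo\norm{\Hat{x}^\pm}_1$ terms, and this requires using the full strength of \cref{Lnice}/\cref{R5.2} — in particular the \emph{strict} bound $\varkappa_\circ^n<1$ with a uniform gap — together with the identity \cref{E-tran} that $1+\langle e,B_1^{-1}B_2 u^s\rangle>0$, to ensure that the sign structure of $B_1^n u^c + B_2^n u^s$ does not defeat the argument. A secondary subtlety is that on $\cK_\delta^+$ the relevant inequality $V_2\ge V_1^2$ needed in \cref{L5.1} holds only for $\delta$ sufficiently close to $1$; one fixes such a $\delta$ first, then chooses $n_0$ accordingly. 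Once \cref{EL5.1A} is established on both $\widetilde{\sX}^n$ and its complement, the proof is complete, and \cref{L5.1} then yields the Foster–Lyapunov inequality \cref{EL5.1B}.
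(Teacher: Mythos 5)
Your reduction to the set $\widetilde{\sX}^n$ where $\Hat\vartheta^n_*\ne0$ is the right first step and matches the paper, but your treatment of $\widetilde{\sX}^n$ has a genuine gap. You propose to handle the extra term $\Hat{\vartheta}^{n}(\Hat{x},\Hat{z})\bigl(B_1^n u^c+B_2^n u^s\bigr)$ purely perturbatively, bounding it by $\veo\,\varkappa_\circ^n\bigl(\norm{\Hat{x}^+}_1\wedge\norm{\Hat{x}^-}_1\bigr)$ via \cref{Lnice}, so that the coefficient of $\norm{\Hat{x}}_1$ in the bound on $F_1^n$ becomes $-\veo(\eta^n\wedge1)+\veo\varkappa_\circ^n$, which you assert is still negative for large $n$. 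There is no reason for that: \cref{Lnice} gives $\varkappa_\circ^n<1$, not $\varkappa_\circ^n<\eta^n\wedge1$, so whenever $\eta^n=\mu_{12}^n/\mu_{11}^n$ is small (say $\eta^n<\tfrac12$ while $\varkappa_\circ^n$ is close to $1$) your bound has the wrong sign and the argument collapses; neither $\eta^n\to\eta$ nor the uniform gap $1-\varkappa_\circ^n>0$ rescues it. The paper does not argue perturbatively for $F_1^n$. It exploits the rigid structure of the `N'~network on $\widetilde{\sX}^n$: there one necessarily has $\Hat{x}_1^-=\Hat{y}_2+\sqrt{n}\,\Bar{z}_{11}$ and $\Hat{x}_2=\Hat{q}_2+\sqrt{n}\,\Bar{z}_{11}$, which pins the control to $u_1^c=0$, $u_2^c=1$, $u_1^s=0$, $u_2^s=1$, and---this is the actual purpose of the threshold $n_0$ in \cref{D5.1}, which you misattribute to ``margins surviving''---forces $\veo\Hat{x}_1\le-1$ and $-\veo\Hat{x}_2\le-1$, hence $\psi'_\veo(\Hat{x}_1)=\psi'_\veo(-\Hat{x}_2)=0$ for $n> n_0$ as in \cref{PL5.3A}. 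With these facts, the identity $\Hat{x}_1^--\Hat{x}_2=\Hat{y}_2-\Hat{q}_2$ together with $\Hat\vartheta^n=\Hat{q}_2$ makes the $\Hat\vartheta^n$-contribution cancel \emph{exactly} against the $\langle e,\Hat{x}\rangle^-$-term, leaving $-(\eta^n-1)\Hat{y}_2$ and hence the clean bound $\tfrac1\theta F_1^n\le\veo\bigl(\tfrac{\varrho_n}{2}-(\eta^n\wedge1)\Hat{x}_1^-\bigr)$ with no $\varkappa_\circ^n$ loss (see \cref{PL5.3D,PL5.3F}).

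The perturbative use of \cref{Lnice} that you propose is employed in the paper only in the $F_2^n$ estimates \cref{PL5.3E,PL5.3G}, where the favorable term carries coefficient $\veo\cdot 1$ rather than $\veo(\eta^n\wedge1)$, so that $1-\varkappa_\circ^n>0$ genuinely suffices. To repair your argument you would need either to prove $\varkappa_\circ^n<\eta^n\wedge1$, which is false in general, or to replace the crude bound on the $\Hat\vartheta^n$-term in $F_1^n$ by the exact cancellation above.
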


\begin{proof}
Recall here that $j=1$ is the non-leaf server pool. As discussed earlier, it suffices
to establish \cref{EL5.1A} in $\widetilde{\sX}^n$.
It is clear that $\Hat{x}_1^-=\Hat{y}_2 + \sqrt{n}\Bar{z}_{11}$, and
$\Hat{x}_2=\Hat{q}_2 + \sqrt{n}\Bar{z}_{11}$
for all $\Hat{z}\in\hcZn(\Hat{x})$ and $\Hat{x}\in\widetilde{\sX}^n$,
with $\widetilde{\sX}^n$ as defined in \cref{EtsX}.
Hence $u_1^c = 0$, $u_2^c = 1$, $u_2^s = 1$, and  $u_1^s = 0$. Note here that SWC policies are interpreted through $u^c$ and $u^s$ as follows: idle servers are only allowed in pool 1 and customer queues are only allowed in class 2 which means that class $1$ must use all the servers in pool $2$ before using servers in pool $1$.
Also, by the definitions of $\psi_\veo$ and $n_0$ we have
\begin{equation}\label{PL5.3A}
\psi'_\veo(\Hat{x}_1)\,=\,\psi'_\veo(-\Hat{x}_2) \,=\,0
\qquad\forall\,\Hat{x}\in\widetilde{\sX}^n\,,\ \forall\,n\ge n_0\,.
\end{equation}

By \cref{ER5.1A,EBi}, we have
\begin{equation}\label{PL5.3B}
\begin{aligned}
\frac{1}{\theta}\,F^n_1(\Hat{x},\Hat{z})
&\,=\, \frac{\varrho_n}{2}\sum_{i\in\cI}\psi'_\veo(-\Hat{x}_i)
+ \sum_{i\in\cI}\psi'_\veo(-\Hat{x}_i)\bigl(\Hat{x}_i
- u_i^c \langle e,\Hat{x}\rangle^+\bigr) - \psi'_\veo(-\Hat{x}_1)(\eta^n - 1)
\langle e,\Hat{x}\rangle^-\\
&\mspace{320mu} - \Hat{\vartheta}^n \Bigl(\psi'_\veo(-\Hat{x}_2) 
 + \psi'_\veo(-\Hat{x}_1)(\eta^n - 1)\Bigr)\,, 
\end{aligned}
\end{equation}
\begin{equation}\label{PL5.3C}
\begin{aligned}
F_2^n(\Hat{x},\Hat{z})
&\,=\, -\frac{\varrho_n}{2}\sum_{i\in\cI}\psi'_\veo(\Hat{x}_i)
- \sum_{i\in\cI}\psi'_\veo(\Hat{x}_i)\bigl(\Hat{x}_i
- u_i^c \langle e,\Hat{x}\rangle^+\bigr) + \psi'_\veo(\Hat{x}_1)(\eta^n - 1)
\langle e,\Hat{x}\rangle^-\\
&\mspace{360mu} + \Hat{\vartheta}^n \Bigl(\psi'_\veo(\Hat{x}_2) 
+ \psi'_\veo(\Hat{x}_1)(\eta^n - 1)\Bigr)\,.
\end{aligned}
\end{equation}

Using the fact that
$\Hat{x}_1^- - \Hat{x}_2 = \Hat{y}_2 - \Hat{q}_2$,
and $\Hat{\vartheta}^n=\Hat{q}_2$ when $\langle e,\Hat{x}\rangle\le 0$, we obtain
from \cref{PL5.3A,PL5.3B} that
\begin{equation}\label{PL5.3D}
\begin{aligned}
\frac{1}{\theta}\,F_1^n(\Hat{x},\Hat{z}) &\,=\,
\frac{\varrho_n}{2}\psi'_\veo(-\Hat{x}_1)
- \psi'_\veo(-\Hat{x}_1) \Hat{x}_1^-
- \psi'_\veo(-\Hat{x}_1)(\eta^n - 1) \langle e,\Hat{x}\rangle^-
- \psi'_\veo(-\Hat{x}_1)(\eta^n - 1)\Hat\vartheta^n\\
&\,=\,\veo\Bigl(\frac{\varrho_n}{2} - \Hat{x}_1^-
- (\eta^n - 1) \bigl(\Hat{x}_1^- - \Hat{x}_2\bigr)
- (\eta^n - 1) \Hat\vartheta^n\Bigr)\\
&\,=\,\veo\Bigl(\frac{\varrho_n}{2} - \Hat{x}_1^- - (\eta^n-1) \Hat{y}_2\Bigr)\\
&\,\le\,\veo\Bigl(\frac{\varrho_n}{2} - \Hat{x}_1^- + (1-\eta^n)^+ \Hat{x}_1\Bigr)\\
&\,\le\,\veo\Bigl(\frac{\varrho_n}{2} - (\eta^n \wedge 1)\Hat{x}_1^- \Bigr)\,\\
&\,\le\, \veo \frac{\varrho_n}{2} - \frac{\veo}{2}(\eta^n \wedge 1)\norm{\Hat{x}}^{}_1
\qquad \forall\,(\Hat{x},\Hat{z})\in
\bigl(\widetilde{\sX}^n\cap\cK_0^-\bigr)\times \hcZn(\Hat{x})\,,
\ \forall\, n\ge n_0\,.
\end{aligned}
\end{equation}
Similarly, from \cref{PL5.3C}, we obtain
\begin{equation}\label{PL5.3E}
\begin{aligned}
F_2^n(\Hat{x},\Hat{z}) &\,=\,-\frac{\varrho_n}{2}
\sum_{i\in\cI}\psi'_\veo(\Hat{x}_i)
- \sum_{i\in\cI}\psi'_\veo(\Hat{x}_i)\Hat{x}_i
+ \psi'_\veo(\Hat{x}_1)(\eta^n - 1) \langle e,\Hat{x} \rangle ^-\\
&\mspace{250mu}+ \Hat{\vartheta}^n \Bigl(\psi'_\veo(\Hat{x}_2)
+ \psi'_\veo(\Hat{x}_1)(\eta^n - 1 )\Bigr)\\
&\,\le\, 1 - \veo \norm{\Hat{x}^+}^{}_1
+ \veo \varkappa_0 \Bigl(\norm{\Hat{x}^-}^{}_1\wedge\norm{\Hat{x}^+}^{}_1 \Bigr)\\
&\,\le\, 1 - \veo (1-\varkappa_0 )\norm{\Hat{x}^+}^{}_1
\qquad \forall\,(\Hat{x},\Hat{z})\in
\bigl(\widetilde{\sX}^n\cap\cK_0^-\bigr)\times \hcZn(\Hat{x})\,,
\ \forall\, n\ge n_0\,,
\end{aligned}
\end{equation}
where we also use \cref{ED4.1D,Lnice}.

We continue with the estimate on $\cK_0^+$. We have 
\begin{equation}\label{PL5.3F}
\begin{aligned}
\frac{1}{\theta}\,F_1^n(\Hat{x},\Hat{z})
&\,\le\,\frac{\varrho_n}{2}\psi'_\veo(-\Hat{x}_1)
- \psi'_\veo(-\Hat{x}_1) \Hat{x}_1^-
- \psi'_\veo(-\Hat{x}_1)(\eta^n - 1)\Hat\vartheta^n\\
&\,=\,\veo\Bigl(\frac{\varrho_n}{2} - \Hat{x}_1^-
- (\eta^n - 1) \Hat\vartheta^n\Bigr)\\
&\,\le\,\veo\Bigl(\frac{\varrho_n}{2} - (\eta^n \wedge 1)\Hat{x}_1^- \Bigr)
\qquad \forall\,(\Hat{x},\Hat{z})\in
\bigl(\widetilde{\sX}^n\cap\cK_0^+\bigr)\times \hcZn(\Hat{x})\,,
\ \forall\, n\ge n_0\,,
\end{aligned}
\end{equation}
where in the last inequality we also use \cref{Lnice}.
 
We break the estimate of $F_2^n$ in two parts.
First, using \eqref{ED4.1D}, \eqref{ED4.2A}, \eqref{PL5.3A} and \cref{Lnice}, we obtain
\begin{equation}\label{PL5.3G}
\begin{aligned}
F_2^n(\Hat{x},\Hat{z}) &\,\le\, -\frac{\varrho_n\veo}{2} -\veo\Hat{x}_2
+ \veo \langle e,\Hat{x} \rangle
+ \veo \varkappa_0  \Bigl(\Hat{x}_1^-\wedge\Hat{x}_2 \Bigr)\\
&\,\le\, -\frac{\varrho_n\veo}{2} -\veo(1-\varkappa_0)\Hat{x}_1^-\\
&\,\le\,
\begin{cases}
-\frac{\varrho_n\veo}{2} -\frac{\veo(1-\delta)}{2}\,
(1-\varkappa_0)\norm{\Hat{x}}^{}_1
&\text{for\ } (\Hat{x},\Hat{z})\in
\bigl(\widetilde{\sX}^n\cap(\cK_0^+\setminus\cK_\delta^+)\bigr)
\times \hcZn(\Hat{x})\\[5pt]
-\frac{\varrho_n\veo}{2} &
\text{for\ } (\Hat{x},\Hat{z})\in
\bigl(\widetilde{\sX}^n\cap\cK_\delta^+\bigr)\times \hcZn(\Hat{x}). 
\end{cases}
\end{aligned}
\end{equation}
Thus, \cref{EL5.1A} follows
by \cref{PL5.3D,PL5.3E,PL5.3F,PL5.3G,ED4.2A}.
This completes the proof.
\end{proof}

\subsubsection{The diffusion scale of networks with a dominant pool}

We describe these networks exactly as in \cref{S4.3} where the dominant
 server pool is $j = 1$.
We first note that the spare capacity $\varrho_n$ of the
$n^{\mathrm{th}}$ system is given by
\begin{equation*}
\varrho_n \,=\, - \frac{1}{\sqrt{n}}\Biggl(\sum_{i\in\cI}\frac{\lambda_i^n}{\mu_{i1}^n}
-  \sum_{i\in\cI} \sum_{j\in\cJ(i)}
\frac{\mu_{ij}^n}{\mu_{i1}^n}\xi^*_{ij}N_j^n \Biggr)\,,
\end{equation*}
where $\xi^*_{ij}$ satisfies
\begin{equation*}
\sum_{j\in\cJ(i)}\mu_{ij} \xi^*_{ij}\nu_j = \lambda_i \,.
\end{equation*}
This is again due to \cref{Eelln,ELP-id,Evarrho,Edrift-i}.

Recall from \cref{ER5.1A} that the drift reduces to the following form:
\begin{equation} \label{Ebn-M}
\begin{aligned}
b_i^n(\Hat{x},\Hat{z}) &\,=\, -\frac{\varrho_n}{m}\mu_{i1}^n
- \mu_{i1}^n\bigl(\Hat{x}_i - u_i^c \langle e,\Hat{x}\rangle^+\bigr)
+ \sum_{j\in\cJ_1(i)}\mu_{i1}^n\bigl(\eta_{ij}^n - 1\bigr)u_{j}^s
\langle e,\Hat{x} \rangle^- \\
&\mspace{220mu} +\Hat{\vartheta}^n
\Biggl( \mu_{i1}^n u_i^c + \sum_{j\in\cJ_1(i)}\mu_{i1}^n
\bigl(\eta_{ij}^n - 1\bigr)u_{j}^s\Biggr)\,,
\qquad i\in\cI\,,
\end{aligned}
\end{equation}
with $\eta_{ij}^n \df \frac{\mu_{ij}^n}{\mu_{i1}^n}$
for $j\in\cJ_1(i) \df \cJ(i) \setminus \{1\}$ and $i\in\cI$.
In analogy to \cref{S4.3}, we define
We define
\begin{equation*}
\Bar\eta_n \,\df\,  \max_{i\in\cI}\,\max_{j\in\cJ_1(i)}\, \eta^n_{ij}\,,
\quad\text{and\ \ }
\underline\eta_n \,\df\, \min_{i\in\cI}\,\min_{j\in\cJ_1(i)}\, \eta^n_{ij}\,.
\end{equation*}

\begin{lemma}\label{L5.4}
Consider a network with a dominant server pool, and assume $\varrho_n>0$.
Then for any
$\theta\ge\theta_0^n\df 2\,\frac{\max_i{\mu_{i1}^n}}{\min_i{\mu_{i1}^n}}$,
and $\delta\in(0,1)$,  there exist positive constants
$c_0$ and $c_1$ such that \cref{EL5.1A} holds for all $n\ge n_0$.
\end{lemma}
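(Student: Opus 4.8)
The plan is to mirror the proof of \cref{L4.2}, now for the diffusion-scaled drift \cref{Ebn-M}, and to dispose of the extra term $\Hat{\vartheta}^n\bigl(\mu^n_{i1}u^c_i+\sum_{j\in\cJ_1(i)}\mu^n_{i1}(\eta^n_{ij}-1)u^s_j\bigr)$ exactly as in \cref{L5.3}. As explained before \cref{Lnice}, it suffices to verify \cref{EL5.1A} on $\widetilde{\sX}^n$: on $\sS^n\setminus\widetilde{\sX}^n$ every $\Hat{z}\in\tcZn(\Hat{x})$ has $\Hat{\vartheta}^n(\Hat{x},\Hat{z})=0$, so $b^n$ reduces to the form \cref{Ebnjwc}, which has the same functional form as the limiting drift \cref{EdriftM} with $\mu_{ij},\varrho,\veo$ replaced by $\mu^n_{ij},\varrho_n,\veo_n$; hence the estimates \cref{PL4.2A,PL4.2B,PL4.2C} go through verbatim (with $\theta\ge\theta_0^n$ giving $V_1\ge V_2^2$ on $\cK_0^-$), and by the convergence in \cref{EHW} the resulting constants $c_0,c_1$ may be chosen independent of $n$.

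On $\widetilde{\sX}^n$ I would first record the structure that SWC and the tree topology impose. Using \cref{ES2.2A,ES2.2B} one rewrites \cref{Ebn-M} in the $\Hat{\vartheta}$-free form
\[
b^n_i(\Hat{x},\Hat{z})\,=\,-\tfrac{\varrho_n}{m}\mu^n_{i1}-\mu^n_{i1}\Hat{x}_i+\mu^n_{i1}u^c_i\langle e,\Hat{q}^n\rangle+\mu^n_{i1}\sum_{j\in\cJ_1(i)}(\eta^n_{ij}-1)u^s_j\langle e,\Hat{y}^n\rangle\,,
\]
so that the computation runs exactly as in the JWC case with $\langle e,\Hat{q}^n\rangle$ and $\langle e,\Hat{y}^n\rangle$ replacing $\langle e,\Hat{x}\rangle^+$ and $\langle e,\Hat{x}\rangle^-$. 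Since $\Hat{\vartheta}^n_\ast(\Hat{x})>0$ on $\widetilde{\sX}^n$ some class is queued, and since the dominant pool $j=1$ is adjacent to every class, work conservation forces pool $1$ to carry no idle servers; hence all idleness sits in leaf pools, so $u^s$ is supported on $\bigcup_i\cJ_1(i)$ and each idle leaf pool $j$ is adjacent to a unique class, which I denote $c(j)$. By an exchange argument as in the proof of \cref{Lnice}, each such $c(j)$ has no queue and uses no pool-$1$ server, whence $\Hat{x}_{c(j)}=-\sqrt{n}\,\Bar{z}^n_{c(j),1}-\sum_{j'\in\cJ_1(c(j))}\Hat{y}^n_{j'}$, so $\Hat{y}^n_j\le\Hat{x}_{c(j)}^-$, and --- by the choice of $n_0$ in \cref{ED5.1A} together with $n>n_0$ --- $\psi'_\veo(\Hat{x}_{c(j)})=0$; this is the exact analogue of \cref{PL5.3A}. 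Combined with the uniform bound $\Hat{\vartheta}^n(\Hat{x},\Hat{z})\le\varkappa_\circ\bigl(\norm{\Hat{x}^+}_1\wedge\norm{\Hat{x}^-}_1\bigr)$, $\varkappa_\circ<1$, of \cref{Lnice} (and the sharper form in \cref{R5.2}), this reduces each of $F_1^n,F_2^n$ to sums over the non-idle classes and lets one recombine the $\langle e,\Hat{y}^n\rangle$- and $\langle e,\Hat{q}^n\rangle$-weighted terms with the idleness and queue bounds precisely as in \cref{PL5.3D,PL5.3E,PL5.3G}.

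With this in hand I would run the three-region analysis of \cref{L4.2,L5.3}. On $(\cK_\delta^+)^c$: using \cref{ED4.1D,ED4.2A,ED4.2D,Lnice} one obtains $\tfrac{1}{\theta}F_1^n\le\varrho_n\veo+\tfrac{m}{2}-\tfrac{\veo(1-\delta)}{2}(\underline\eta_n\wedge1)\norm{\Hat{x}}_1$, together with $F_2^n\le\tfrac{m}{2}-\veo(1-\varkappa_\circ)\norm{\Hat{x}^+}_1$ on $\cK_0^-$ and $F_2^n\le-\tfrac{\varrho_n\veo}{m}+\tfrac{m}{2}-\tfrac{\veo(1-\delta)}{2}(1-\varkappa_\circ)\norm{\Hat{x}}_1$ on $\cK_0^+\setminus\cK_\delta^+$; since $\theta\ge\theta_0^n$ gives $V_1\ge V_2^2$ on $\cK_0^-$, these yield the first inequality of \cref{EL5.1A}. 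On $\cK_\delta^+$: here $\langle e,\Hat{x}\rangle>0$, so at least one $\Hat{x}_i>0$ and hence $-\tfrac{\varrho_n}{m}\sum_i\psi'_\veo(\Hat{x}_i)\le-\tfrac{\varrho_n\veo_n}{m}$; the idle classes drop out of $F_2^n$ because $\psi'_\veo(\Hat{x}_{c(j)})=0$, and $\langle e,\Hat{q}^n\rangle=\langle e,\Hat{x}\rangle^++\Hat{\vartheta}^n$ is absorbed using the slack $1-\varkappa_\circ$ of \cref{Lnice}, so that $F_2^n\le-\tfrac{\varrho_n\veo_n}{m}$ up to a bounded correction, which supplies the second inequality of \cref{EL5.1A} (the bounded correction being harmless, exactly as the constant $\widehat{C}_0$ in \cref{EL5.1B}). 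Uniformity of $c_0,c_1$ in $n$ again follows from \cref{EHW}.

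The main obstacle is the bookkeeping on $\widetilde{\sX}^n$ when several leaf pools are idle and several classes queued at once: in the `N' network of \cref{L5.3} there is exactly one of each, so the cancellations are transparent, whereas here the $\langle e,\Hat{y}^n\rangle$-weighted term spreads over the entire subtree of idle pools and one must check that every class contributing a non-vanishing $\psi'_\veo$ is absorbed either by the identity $\Hat{y}^n_j\le\Hat{x}_{c(j)}^-$ or by \cref{ED4.1D,ED4.2D}, uniformly over $\Hat{z}\in\tcZn(\Hat{x})$ and $n>n_0$. The delicate step, exactly as \cref{PL5.3F,PL5.3G} in \cref{L5.3}, is the sign tracking on $\cK_\delta^+$, where no $\order(\varrho_n\veo_n/m)$ of slack may be lost while recombining the $\Hat{\vartheta}^n$-contribution; it is here that the strict inequality $\varkappa_\circ<1$ from \cref{Lnice}, the vanishing of $\psi'_\veo$ on the idle classes for $n>n_0$, and the centering \cref{ER5.1A} all have to cooperate.
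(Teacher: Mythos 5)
Your proposal follows essentially the same route as the paper's proof: reduction to $\widetilde{\sX}^n$, the bound from \cref{Lnice} (and its refinement in \cref{R5.2}), the observation that $\psi'_\veo$ vanishes on the classes served only by idle leaf pools for $n>n_0$ via \cref{ED5.1A}, the estimate \cref{PL4.2C}-style absorption of the $u^c\langle e,\Hat{q}^n\rangle$ term through \cref{ED4.2D}, and the $V_1\ge V_2^2$ domination on $\cK_0^-$; your $\Hat{\vartheta}$-free rewriting of \cref{Ebn-M} via \cref{ES2.2A} is only a cosmetic repackaging of the paper's \cref{PL5.4A,PL5.4B}. One caution: on $\cK_\delta^+$ the second line of \cref{EL5.1A} admits \emph{no} additive constant (an additive slack in $F_2^n$ multiplies the unbounded $V_2$ and cannot be absorbed into $\widehat{C}_0$ as you suggest), so the phrase ``up to a bounded correction'' must be dropped --- fortunately the computation you describe (idle classes contribute nothing, and the $\Hat{\vartheta}$-term is cancelled against $\veo\sum_{i\in\cI_\circ}\Hat{x}_i$ using $\varkappa_\circ^n<1$) yields the clean bound $F_2^n\le-\tfrac{\varrho_n\veo_n}{m}$ exactly, as in \cref{PL5.4H}.
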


\begin{proof}
Suppose $\Hat{x}\in\widetilde{\sX}^n$. 
A simple calculation using \cref{Ebn-M} shows that
 \begin{equation}\label{PL5.4A}
\begin{aligned}
\frac{1}{\theta}\,F_1^n(\Hat{x},\Hat{z})
\,=\, \frac{\varrho_n}{m}\sum_{i\in\cI}\psi'_\veo(-\Hat{x}_i)
&+ \sum_{i\in\cI}\psi'_\veo(-\Hat{x}_i)\bigl(\Hat{x}_i - u_i^c
\langle e,\Hat{x}\rangle^+\bigr) \\
&\mspace{10mu}-\sum_{i\in\cI}\sum_{j\in\cJ_1(i)} \psi'_\veo(-\Hat{x}_i)
\bigl(\eta_{ij}^n - 1\bigr)u_{j}^s \langle e,\Hat{x} \rangle^- \\
&\mspace{30mu}- \Hat{\vartheta}^n \Biggl( \sum_{i\in\cI}\psi'_\veo(-\Hat{x}_i)u_i^c
+ \sum_{i\in\cI}\sum_{j\in\cJ_1(i)}\psi'_\veo(-\Hat{x}_i)\bigl(\eta_{ij}^n
- 1\bigr)u_{j}^s \Biggr)\,,
\end{aligned}
\end{equation}
and
\begin{equation}\label{PL5.4B}
\begin{aligned}
F_2^n(\Hat{x},\Hat{z})
\,=\, -\frac{\varrho_n}{m}\sum_{i\in\cI}\psi'_\veo(\Hat{x}_i)
&- \sum_{i\in\cI}\psi'_\veo(\Hat{x}_i)\bigl(\Hat{x}_i - u_i^c
\langle e,\Hat{x}\rangle^+\bigr) \\
&\mspace{10mu}+\sum_{i\in\cI}\sum_{j\in\cJ_1(i)} \psi'_\veo(\Hat{x}_i)
\bigl(\eta_{ij}^n - 1\bigr)u_{j}^s \langle e,\Hat{x} \rangle^- \\
&\mspace{30mu}+ \Hat{\vartheta}^n \Biggl( \sum_{i\in\cI}\psi'_\veo(\Hat{x}_i)u_i^c
+ \sum_{i\in\cI}\sum_{j\in\cJ_1(i)}\psi'_\veo(\Hat{x}_i)
\bigl(\eta_{ij}^n - 1\bigr)u_{j}^s \Biggr)\,.
\end{aligned}
\end{equation}

By \cref{PL5.4A} we obtain
\begin{equation}\label{PL5.4C}
\begin{aligned}
\frac{1}{\theta}\,F_1^n(\Hat{x},\Hat{z}) &\,\le\,
\varrho_n\veo +\frac{m}{2} - \veo \sum_{i\in\cI} \Hat{x}_i^-
+ \veo\bigl(1 - \underline\eta_n\bigr)^+ \langle e,\Hat{x}\rangle^-
+\veo\bigl(1-\underline\eta_n\bigr)^+\Hat\vartheta^n\\
&\,\le\, \varrho_n\veo  +\frac{m}{2}
- \veo \sum_{i\in\cI} \Hat{x}_i^-  + \veo \bigl(1-\underline\eta_n\bigr)^+
\Biggl(\sum_{i\in\cI} \bigl(\Hat{x}_i^- - \Hat{x}_i^+\bigr)
+ \norm{\Hat{x}^-}^{}_1 \wedge \norm{\Hat{x}^+}^{}_1\Biggr)\\
&\,\le\, \varrho_n \veo  +\frac{m}{2}
- \veo \bigl(\underline\eta_n \wedge 1\bigr) \norm{\Hat{x}^-}^{}_1\\
&\,\le\, \varrho_n \veo  +\frac{m}{2}
- \frac{\veo(1-\delta)}{2}\,\bigl(\underline\eta_n \wedge 1\bigr)\norm{\Hat{x}}^{}_1
\quad \forall\,(\Hat{x},\Hat{z})\in
\bigl(\widetilde{\sX}^n\setminus \cK_\delta^+\bigr)\times \hcZn(\Hat{x})\,,
\ \forall\, n\in\NN\,,
\end{aligned}
\end{equation}
where we used \cref{ED4.1D} in the first inequality,
\cref{Lnice} in the second, and \cref{ED4.2A} in the fourth.

Next, we estimate a bound for $F_2^n(\Hat{x},\Hat{z})$.
Recall the definitions of $\cI_\circ$, $\cJ_\circ$, $\cE_\circ$,
and $\cE_\circ'$ in the proof of \cref{Lnice}.
Since $x\in\widetilde\sX^n$, we have $u_i^c=0$ for all $i\in\cI_\circ$,
and $u_j^s=0$ for all $j\in\cJ_\circ^{\mathsf c}$. SWC policies are interpreted here as follows: customer classes must use all the servers in the leaf pools available to them before using servers in pool $j=1$.
Additionally, $\Hat{x}_i \le -\sum_{(i,j)\in\cE_\circ'} \Bar{z}_{ij}$
for $i\in\cI_\circ$,
which implies that $\psi_\veo'(x_i) = 0$ for all $i\in\cI_\circ$
and $n>n_0$, by \cref{D5.1}. 
Hence,  since $\sum_{i\in\cI_\circ^{\mathsf c}} \Hat{x}_i>0$,
where $\cI_\circ^{\mathsf c}\equiv \cI\setminus\cI_\circ$, we have
\begin{equation}\label{PL5.4G}
\sum_{i\in\cI}\psi'_\veo(\Hat{x}_i)
\bigl(\Hat{x}_i - u_i^c \langle e,\Hat{x}\rangle^+\bigr)
\,\ge\,  \sum_{i\in\cI_\circ^{\mathsf c}}\psi'_\veo(\Hat{x}_i)\Hat{x}_i
- \veo \sum_{i\in\cI_\circ^{\mathsf c}} \Hat{x}_i - \veo \sum_{i\in\cI_\circ} \Hat{x}_i
\,\ge\,- \veo \sum_{i\in\cI_\circ} \Hat{x}_i
\end{equation}
by \cref{ED4.2D}.
Using \cref{PL5.4B} together with \cref{R5.2,PL5.4G}, we obtain
\begin{equation}\label{PL5.4H}
\begin{aligned}
F_2^n(\Hat{x},\Hat{z}) &\,\le\, -\frac{\varrho_n\veo}{m} 
+\veo \Hat\vartheta^n + \veo\sum_{i\in\cI_\circ}\Hat{x}_i
+ \Hat{\vartheta}^n \Biggl(\sum_{i\in\cI_\circ}\sum_{j\in\cJ_1(i)}\psi'_\veo(\Hat{x}_i)
\bigl(\eta_{ij}^n - 1\bigr)u_{j}^s \Biggr)\\
&\,\le\,
- \frac{\varrho_n\veo}{m}
+ \veo (1-\varkappa_\circ^n)\sum_{i\in\cI_\circ}\Hat{x}_i\\
&\,\le\,
\begin{cases}
- \frac{\varrho_n\veo}{m}-\frac{\veo(1-\delta)}{2}\,
(1-\varkappa_\circ^n)\,\widetilde\varkappa_\circ^n\,\norm{\Hat{x}}^{}_1
& \text{for\ } (\Hat{x},\Hat{z})\in
\bigl(\widetilde{\sX}^n\cap(\cK_0^+\setminus\cK_\delta^+)\bigr)\times \hcZn(\Hat{x})
\\[5pt]
-\frac{\varrho_n\veo}{2} & \text{for\ } (\Hat{x},\Hat{z})\in
\bigl(\widetilde{\sX}^n\cap\cK_\delta^+\bigr)\times \hcZn(\Hat{x})\,,
\end{cases}
\end{aligned}
\end{equation}
for all $n\ge n_0$.
Thus, the result follows by \cref{PL5.4C,PL5.4H}, noting also
that the choice of $\theta$ implies that $V_1\ge V_2^2$ on $\cK_0^-$.
\end{proof}

\subsubsection{The diffusion-scale of networks with class-dependent service rates}

Recall from \cref{S4.3} that the drift in \cref{ER5.1A} reduces to 
\begin{equation}\label{Ebn-i}
b^n(\Hat{x},\Hat{z}) \,=\, -\frac{\varrho_n}{m}B_1^n e
- B_1^n \bigl(\Hat{x}-\langle e,\Hat{x}\rangle^{+} u^c\bigr)
+\Hat{\vartheta}^{n}(\Hat{x},\Hat{z}) B_1^n u^c \,.
\end{equation}
where $B_1^n=\diag(\mu_{1}^n,\dotsc,\mu_{m}^n)$.
Thus, the spare capacity $\varrho_n$ is given by
\begin{equation*}
\varrho_n \,=\, - \frac{1}{\sqrt{n}}
\biggl(\sum_{i\in\cI}\frac{\lambda_i^n}{\mu_i^n}
-  \sum_{i\in\cI} \sum_{j\in\cJ(i)}\xi^*_{ij}N_j^n \biggr)\,.
\end{equation*}

\begin{lemma}\label{L5.5} 
Suppose that $\mu_{ij}^n = \mu_{i}^n$, 
for all $i\in\cI$, and $\varrho_n>0$.
Then, for any
$\theta\ge\theta_0^n\df 2\,\frac{\mu^n\mx}{\mu^n\mn}$,
and $\delta\in(0,1)$, the conclusions of \cref{L5.4} follow.
\end{lemma}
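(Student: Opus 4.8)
The plan is to follow the template of the proof of \cref{L5.4}, exploiting the fact that here $B_2^n=0$, which removes all the $\eta_{ij}^n$ terms and makes the calculations strictly easier than in \cref{L5.4}. As noted before \cref{Lnice}, on $\sS^n\setminus\widetilde{\sX}^n$ every SWC action is JWC, so the drift $b^n$ takes the form \cref{Ebnjwc}, which for this class of networks is exactly the functional form of the diffusion drift \cref{Edrift-i} with $(\mu_i,\varrho,\veo)$ replaced by $(\mu_i^n,\varrho_n,\veo_n)$; hence the estimates of \cref{L4.3} yield \cref{EL5.1A} there. It therefore remains to establish \cref{EL5.1A} on $\widetilde{\sX}^n$. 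Throughout I would use the functions $F_1^n,F_2^n$, the elementary bounds $0\le\psi'_\veo\le\veo$ and $\psi''_\veo\le\tfrac32\veo^2$, the identities \cref{ED4.1D,ED4.2A,ED4.2B,ED4.2D}, the fact that $\theta\ge\theta_0^n$ forces $V_1\ge V_2^2$ on $\cK_0^-$, and, crucially, the bounds on $\Hat\vartheta^n$ supplied by \cref{Lnice,R5.2}. From \cref{Ebn-i} (recall $B_1^n=\diag(\mu_1^n,\dots,\mu_m^n)$ and $B_2^n=0$) one gets
\begin{align*}
\tfrac1\theta F_1^n(\Hat x,\Hat z)&=\tfrac{\varrho_n}{m}\sum_{i\in\cI}\psi'_\veo(-\Hat x_i)+\sum_{i\in\cI}\psi'_\veo(-\Hat x_i)\bigl(\Hat x_i-u_i^c\langle e,\Hat x\rangle^+\bigr)-\Hat\vartheta^n\sum_{i\in\cI}\psi'_\veo(-\Hat x_i)u_i^c\,,\\
F_2^n(\Hat x,\Hat z)&=-\tfrac{\varrho_n}{m}\sum_{i\in\cI}\psi'_\veo(\Hat x_i)-\sum_{i\in\cI}\psi'_\veo(\Hat x_i)\bigl(\Hat x_i-u_i^c\langle e,\Hat x\rangle^+\bigr)+\Hat\vartheta^n\sum_{i\in\cI}\psi'_\veo(\Hat x_i)u_i^c\,.
\end{align*}

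For $F_1^n$ the last two sums are nonnegative and enter with a minus sign (recall $\Hat\vartheta^n\ge0$), so they can be discarded; combined with \cref{ED4.1D} this gives, uniformly in $\Hat z\in\tcZn(\Hat x)$,
\[
\tfrac1\theta F_1^n(\Hat x,\Hat z)\,\le\,\varrho_n\veo+\tfrac m2-\veo\norm{\Hat x^-}_1\,\le\,\varrho_n\veo+\tfrac m2-\tfrac{\veo(1-\delta)}{2}\norm{\Hat x}_1\qquad\forall\,\Hat x\in\widetilde{\sX}^n\setminus\cK_\delta^+\,,
\]
using $\norm{\Hat x^-}_1\ge\tfrac12\norm{\Hat x}_1$ on $\cK_0^-$ and \cref{ED4.2A} on $\cK_0^+\setminus\cK_\delta^+$. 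For $F_2^n$ I would reproduce the argument around \cref{PL5.4G,PL5.4H}: on $\widetilde{\sX}^n$ work conservation forces $u_i^c=0$ for $i\in\cI_\circ$, and for $n>n_0$ the choice of $n_0$ in \cref{ED5.1B} (together with $\Hat x_i\le-\sum_{(i,j)\in\cE_\circ'}\Bar z_{ij}^n$) forces $\psi'_\veo(\Hat x_i)=0$ for $i\in\cI_\circ$; hence every sum above restricts to $\cI_\circ^c$, \cref{ED4.2D} and $\sum_iu_i^c=1$ give $\sum_i\psi'_\veo(\Hat x_i)\bigl(\Hat x_i-u_i^c\langle e,\Hat x\rangle^+\bigr)\ge-\veo\sum_{i\in\cI_\circ}\Hat x_i$, and $\Hat\vartheta^n$ is controlled by $\Hat\vartheta^n_\ast(\Hat x)\le-\varkappa_\circ^n\sum_{i\in\cI_\circ}\Hat x_i$ from \cref{R5.2}. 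This yields $F_2^n\le-\tfrac{\varrho_n\veo}{m}+\veo(1-\varkappa_\circ^n)\sum_{i\in\cI_\circ}\Hat x_i$, so that $F_2^n\le-\tfrac{\varrho_n\veo}{m}\le-\tfrac{\varrho_n\veo_n}{2m}$ on $\widetilde{\sX}^n\cap\cK_\delta^+$, while on $\widetilde{\sX}^n\cap(\cK_0^+\setminus\cK_\delta^+)$, using $-\sum_{i\in\cI_\circ}\Hat x_i\ge\widetilde\varkappa_\circ^n\norm{\Hat x^-}_1$ and \cref{ED4.2A}, one gets $F_2^n\le-\tfrac{\varrho_n\veo}{m}-\tfrac{\veo(1-\delta)}{2}(1-\varkappa_\circ^n)\widetilde\varkappa_\circ^n\norm{\Hat x}_1$; on $\cK_0^-$ a crude affine bound $\abs{F_2^n}\le C(1+\norm{\Hat x}_1)$ suffices.

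Assembling: the $\cK_\delta^+$ estimate for $F_2^n$ is the second inequality of \cref{EL5.1A}. Off $\cK_\delta^+$ I would combine the bounds exactly as in \cref{Lstable,L5.1}: on $\cK_0^-$ we have $V_1\ge V_2^2$ with $V_1$ of exponential growth in $\norm{\Hat x}_1$, so $F_2^nV_2$ is absorbed into $-\veo c\,\norm{\Hat x}_1 V_1$; on $\cK_0^+\setminus\cK_\delta^+$ both $F_1^n$ and $F_2^n$ are $\le c_0-\veo c_1\norm{\Hat x}_1$, hence $\langle b^n,\nabla V\rangle=F_1^nV_1+F_2^nV_2\le(c_0-\veo c_1\norm{\Hat x}_1)V$, which is the first inequality of \cref{EL5.1A} after enlarging the cube $K_r$. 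The choice $\theta\ge\theta_0^n=2\tfrac{\mu^n\mx}{\mu^n\mn}$ is used solely to guarantee $V_1\ge V_2^2$ on $\cK_0^-$ via \cref{ED4.2B}. As in \cref{L5.4}, the one genuinely delicate point is the behaviour on $\widetilde{\sX}^n$: one must show the extra term $\Hat\vartheta^n(\Hat x,\Hat z)B_1^nu^c$ in the prelimit drift does not destroy the drift inequality, and the essential inputs are the sharp bounds on $\Hat\vartheta^n_\ast$ from \cref{Lnice,R5.2} and the vanishing of $\psi'_\veo$ on the coordinates in $\cI_\circ$ for $n>n_0$ (the role of \cref{ED5.1B}); the case $B_2^n=0$ renders everything else routine, and the conclusions of \cref{L5.4}, in particular \cref{EL5.1A}, follow.
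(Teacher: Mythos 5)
Your treatment of $F_1^n$, of the region $\sS^n\setminus\widetilde{\sX}^n$ (where the SWC drift has the diffusion's functional form), and of the final assembly via \cref{L5.1} all match the paper. However, there is a genuine gap in your estimate of $F_2^n$ on $\widetilde{\sX}^n\cap\cK_0^+$: you propose to ``reproduce the argument around \cref{PL5.4G,PL5.4H}'', whose linchpin is that $\psi'_\veo(\Hat{x}_i)=0$ for all $i\in\cI_\circ$ when $n>n_0$. That step is valid for a dominant server pool because there the hub pool $1$ is never in $\cJ_\circ$, so every $i\in\cI_\circ$ has the edge $(i,1)$ in $\cE_\circ'$ and hence $\Hat{x}_i\le-\sqrt{n}\,\Bar{z}^n_{i1}$, forcing $\veo\Hat{x}_i\le-1$. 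For a general tree with class-dependent rates this fails: a class $i\in\cI_\circ$ may have \emph{all} of its server pools inside $\cJ_\circ$ (no incident edge in $\cE_\circ'$), in which case $\Hat{x}_i$ is merely nonpositive — e.g.\ of order $n^{-\nicefrac{1}{2}}$ if its pools carry only a few idle servers — and $\psi'_\veo(\Hat{x}_i)\ne0$. The paper flags exactly this: ``It is not the case here that $\psi'_\veo(x_i)=0$ for all $i\in\cI_\circ$ and $n>n_0$.'' Consequently the inequality $\sum_i\psi'_\veo(\Hat{x}_i)\bigl(\Hat{x}_i-u_i^c\langle e,\Hat{x}\rangle^+\bigr)\ge-\veo\sum_{i\in\cI_\circ}\Hat{x}_i$ that you import from \cref{PL5.4G} is not justified, and the positive term $\Hat\vartheta\sum_i\psi'_\veo(\Hat{x}_i)u_i^c\le\veo\Hat\vartheta$, which can grow like $\veo\varkappa_\circ\norm{\Hat{x}^-}_1$, is left uncompensated.

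The paper's actual workaround is the chain \cref{PL5.5D,PL5.5E,PL5.5F}: one isolates the quantity $\Hat\vartheta-\sum_{\ell\in\cI_\circ}\Hat{x}_\ell^-$, which by the exact identity \cref{PL5.5E} equals $-\sqrt{n}\sum_{(i,j)\in\cE_\circ'}\Bar{z}^n_{ij}$, a negative term of order $\sqrt{n}$; the residual $-\sum_{i\in\cI_\circ}\psi'_\veo(\Hat{x}_i)\Hat{x}_i$ is then controlled in \cref{PL5.5F} by observing that only coordinates with $\Hat{x}_i^-<\nicefrac{1}{\veo_n}$ contribute, and the factor $\tfrac{1}{2m}$ in \cref{ED5.1B} (absent from \cref{ED5.1A}) is chosen precisely so that these $m$ contributions total at most half of $\sqrt{n}\min_{i\sim j}\Bar{z}^n_{ij}$, yielding $F_2^n\le\tfrac{\veo}{2}\bigl(\Hat\vartheta-\sum_{\ell\in\cI_\circ}\Hat{x}_\ell^-\bigr)$ and then the desired negative drift via \cref{R5.2}. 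So the definition \cref{ED5.1B} is not playing the role you assign to it (killing $\psi'_\veo$ on $\cI_\circ$); it is calibrated for this absorption argument. You need to replace your $F_2^n$ step on $\widetilde{\sX}^n\cap\cK_0^+$ with an argument of this type before the second inequality of \cref{EL5.1A}, and hence the lemma, follows.
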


\begin{proof}
Suppose $\Hat{x}\in\widetilde{\sX}^n$.
A simple calculation using \cref{Ebn-i} shows that
\begin{align}
\frac{1}{\theta}\,F_1^n(\Hat{x},\Hat{z})
&\,=\, \frac{\varrho_n}{m}\sum_{i\in\cI}\psi'_\veo(-\Hat{x}_i)
+ \sum_{i\in\cI}\psi'_\veo(-\Hat{x}_i)
\bigl(\Hat{x}_i - u_i^c \langle e,\Hat{x}\rangle^+\bigr)
- \Hat{\vartheta}^n \sum_{i\in\cI}\psi'_\veo(-\Hat{x}_i)u_i^c\,,\label{PL5.5A}
\\
F_2^n(\Hat{x},\Hat{z})
&\,=\,-\frac{\varrho_n}{m}\sum_{i\in\cI}\psi'_\veo(\Hat{x}_i)
- \sum_{i\in\cI}\psi'_\veo(\Hat{x}_i)
\bigl(\Hat{x}_i - u_i^c \langle e,\Hat{x}\rangle^+\bigr)
+ \Hat{\vartheta}^n \sum_{i\in\cI}\psi'_\veo(\Hat{x}_i)u_i^c\,.\label{PL5.5B}
\end{align}

By \cref{PL5.5A}, we obtain
\begin{equation*}
\begin{aligned}
\frac{1}{\theta}\,F_1^n(\Hat{x},\Hat{z}) &\,\le\,
\varrho_n\veo  + \frac{m}{2} - \veo\norm{\Hat{x}^-}^{}_1 \\
&\,\le\, \varrho_n \veo  + \frac{m}{2} - \frac{\veo(1-\delta)}{2}\norm{\Hat{x}}^{}_1
\qquad \forall\,(\Hat{x},\Hat{z})\in
\bigl(\widetilde{\sX}^n\setminus\cK_\delta^-\bigr)\times \hcZn(\Hat{x})\,,
\ \forall\, n\ge n_0\,.
\end{aligned}
\end{equation*}

In computing the analogous bound to \cref{PL5.4H},
there is a difference here.
It is not the case here that $\psi_\veo'(x_i) = 0$ for all $i\in\cI_\circ$
and $n>n_0$.

So instead, recalling that $u_i^c=0$ for all $i\in \cI_\circ$,
and since $\Hat{x}\in\cK_0^+$, we write
\begin{equation}\label{PL5.5D}
\begin{aligned}
-\sum_{i\in\cI}\psi'_\veo(\Hat{x}_i)
\bigl(\Hat{x}_i - u_i^c \langle e,\Hat{x}\rangle^+\bigr)
+ \Hat{\vartheta}^n \sum_{i\in\cI}\psi'_\veo(\Hat{x}_i)u_i^c
&\,\le\,
-\sum_{i\in\cI}\psi'_\veo(\Hat{x}_i)\Hat{x}_i + \veo\langle e,\Hat{x}\rangle
+ \veo\Hat{\vartheta}^n
\\
&\,\le\,
\veo \Biggl(\Hat{\vartheta}^n - \sum_{\ell\in\cI_\circ} \Hat{x}_\ell^-\Biggr)
- \sum_{i\in\cI_\circ}\psi'_\veo(\Hat{x}_i)\Hat{x}_i\\
&\qquad
- \Biggl(\sum_{i\in\cI_\circ^{\mathsf c}}\psi'_\veo(\Hat{x}_i)\Hat{x}_i
-\veo \sum_{i\in\cI_\circ^{\mathsf c}} \Hat{x}_i\Biggr)\,.\\
\end{aligned}
\end{equation}
The third term on the right-hand side is nonpositive by \cref{ED4.2D}.
We also have 
\begin{equation}\label{PL5.5E}
\Hat{\vartheta}^n - \sum_{\ell\in\cI_\circ} \Hat{x}_\ell^-
\,=\, - \sqrt{n}\,\sum_{(i,j)\in\cE_\circ'} \Bar{z}_{ij}^n\,,
\end{equation}
and
\begin{equation}\label{PL5.5F}
- \sum_{i\in\cI_\circ}\psi'_\veo(\Hat{x}_i)\Hat{x}_i
\,\le\,
\sum_{\Hat{x}_i^-\le \frac{1}{2m}\sqrt n\,\min_{i\sim j}\,\Bar{z}^n_{ij}} \Hat{x}_i^-
\,\le\, \frac{1}{2}\sqrt n\,\min_{i\sim j}\,\Bar{z}^n_{ij}\,.
\end{equation}
Therefore, by \cref{PL5.5B}, \cref{PL5.5D,PL5.5E,PL5.5F,R5.2}, we obtain
\begin{equation*}
\begin{aligned}
F_2^n(\Hat{x},\Hat{z})
&\,\le\, \frac{\veo}{2}
 \Biggl(\Hat{\vartheta}^n - \sum_{\ell\in\cI_\circ} \Hat{x}_\ell^-\Biggr)\\
&\,\le\, \frac{\veo}{2}
(1-\varkappa_\circ^n)\,\widetilde\varkappa_\circ^n\,\norm{\Hat{x}^-}^{}_1
\qquad\forall\, (\Hat{x},\Hat{z})\in
\bigl(\widetilde{\sX}^n\cap\cK_0^+)\times \hcZn(\Hat{x})\,.
\end{aligned}
\end{equation*}
The rest follows as in \cref{L5.4}.
\end{proof}

%
%
%

\section*{Acknowledgment}
This work is  supported in part by the Army Research Office 
through grant W911NF-17-1-0019, and
in part by NSF grants DMS-1715210, CMMI-1635410, and DMS/CMMI-1715875,
and in part by the Office of Naval Research through grant N00014-16-1-2956
and was approved for public release under DCN \#43-5454-19.

\bibliographystyle{apalike}
\bibliography{N-paper-Ref}

\end{document}